\numberwithin{equation}{section}
\theoremstyle{thmstyleone}%
\newtheorem{theorem}{Theorem}[section]
\newtheorem{proposition}[theorem]{Proposition}%
\newtheorem{lemma}[theorem]{Lemma}%
\newtheorem{remark}[theorem]{Remark}
\newtheorem{definition}[theorem]{Definition}
\begin{document}

\title[The Monotonicity Conjecture and Stability of Solitons ]{The Monotonicity Conjecture and Stability of Solitons for the Cubic-Quintic NLS on $\mathbb{R}^3$}


\author*[1]{\fnm{Jian} \sur{Zhang}}\email{zhangjian@uestc.edu.cn}

\author[2]{\fnm{Shihui} \sur{Zhu}}\email{shihuizhumath@163.com}

\affil*[1]{\orgdiv{School of Mathematical Sciences}, \orgname{University of Electronic Science and Technology of China}, \orgaddress{\street{} \city{Chengdu}, \postcode{611731}, \state{} \country{China}}}

\affil[2]{\orgdiv{VCVR Key Lab of Sichuan Province}, \orgname{Sichuan Normal University}, \orgaddress{\street{} \city{Chengdu}, \postcode{610066}, \state{} \country{China}}}


\abstract{In this paper, we prove stability or instability of solitons for the cubic-quintic nonlinear Schr\"{o}dinger equation (NLS) at every frequency. The monotonicity conjecture raised by Killip, Oh, Pocovnicu and Visan is resolved. We introduce and solve a new cross-constrained variational problem. Then uniqueness of the energy minimizers is proposed and shown. According to a spectral approach and  variational arguments, we develop a set of geometric analysis methods. Correspondences between the soliton frequency and the prescribed mass are established. Classification of normalized solutions is given.
}

\keywords{Nonlinear Schr\"{o}dinger equation, Stability of solitons, Variational method, Spectral approach, Normalized solutions}


\pacs[MSC classfication(2010)]{35Q55, 35B35, 37K40, 35J50, 35J60}

\maketitle
\newpage
\tableofcontents 

\pagestyle{fancy} 
\fancyhf{}
\fancyhead[OL]{ }
\fancyhead[OC]{}
\fancyhead[OR]{J. Zhang, S.  Zhu}
\fancyhead[EL]{The Monotonicity Conjecture and Stability of Solitons}
\fancyhead[EC]{ }
\fancyhead[ER]{ }
\fancyfoot[RO, LE]{\thepage}

\section{Introduction}\label{sec1}

This paper studies the cubic-quintic nonlinear Schr\"{o}dinger equation (NLS)
\begin{equation}\label{1.1}
i\partial_{t}\varphi +\Delta  \varphi +\lvert\varphi\rvert^2\varphi-\lvert\varphi\rvert^4\varphi=0,\quad (t,x)\in\mathbb{R}\times\mathbb{R}^3.
\end{equation}
(\ref{1.1}) is the defocusing energy critical  nonlinear Schr\"{o}dinger equation with focusing cubic perturbation on $\mathbb{R}^3$. We are interested in orbital stability of solitons at every frequency as well as the monotonicity conjecture raised by Killip, Oh, Pocovnicu and Visan \cite{KOPV2017}.

(\ref{1.1})  is a typical model for soliton theory (see \cite{KOPV2017,M2019}) in physics, such as nonlinear optics (see \cite{MMCTBMT2002,MMCML2000,DMM2000}), Langmuir waves in plasma (see \cite{ZH1994}) and the Bose-Einstein condensation (see \cite{GFTLC2000,AGTF2001}).

From viewpoint of mathematics, the focusing cubic or quintic nonlinear Schr\"{o}dinger equation on $\mathbb{R}^3$ possesses  solitons, but all solitons are unstable (see \cite{BC1981,S2009,KM2006,NS2012,OT1991}). The defocusing cubic or quintic nonlinear Schr\"{o}dinger equation on $\mathbb{R}^3$ has no any solitons to exist, but possesses scattering property (see \cite{B1999,CKSTT2008,D2012,DHR2008}). In addition, the scaling invariance of the pure power case is breaked in (\ref{1.1})(see \cite{C2003}). Therefore the dynamics of (\ref{1.1}) becomes a challenging issue (see \cite{KOPV2017,CS2021,MXZ2013,LR2020,TVZ2007,JL2022,AM2022,MKV2021}). This motivates us to study the stability of solitons for (\ref{1.1}) (see \cite{T2009}), which  directly  concerns about the monotonicity conjecture proposed by Killip, Oh, Pocovnicu and Visan (see Conjecture 2.3 in \cite{KOPV2017}).

In the energy space $H^{1}(\mathbb{R}^3)$, consider the scalar field equation for $\omega\in\mathbb{R}$,
\begin{equation}\label{1.2}
-\Delta u-\lvert u\rvert^2u+\lvert u\rvert^4u+\omega u =0, \quad u\in H^{1}(\mathbb{R}^3).
\end{equation}
From Berestycki and Lions \cite{BL1983}, if and only if
\begin{equation}\label{1.3}
0<\omega<\frac{3}{16},
\end{equation}
(\ref{1.2}) possesses non-trivial solutions (also see \cite{KOPV2017}). From Gidas, Ni and Nirenberg \cite{GNN1979}, every positive solution of (\ref{1.2}) is radially symmetric. From Serrin and Tang \cite{ST2000}, the positive solution of (\ref{1.2}) is unique up to translations. Therefore one concludes that for $\omega\in(0,\frac{3}{16})$, (\ref{1.2}) possesses a unique positive solution $Q_\omega(x)$ (see \cite{KOPV2017,K2011}), which is called ground state of (\ref{1.2}).

Let $Q_\omega$ be a ground state of (\ref{1.2}) with $\omega\in(0,\frac{3}{16})$. It is easily checked that
\begin{equation}\label{1.9}
\varphi(t,x)=Q_\omega(x)e^{i\omega t}
\end{equation}
is a solution of (\ref{1.1}), which is called a ground state soliton of (\ref{1.1}). We also directly call (\ref{1.9}) soliton of (\ref{1.1}), and call $\omega$ frequency of soliton. It is known that (\ref{1.1}) admits time-space translation invariance, phase invariance and Galilean invariance. Then for arbitrary $x_0\in\mathbb{R}^3$, $v_0\in\mathbb{R}^3$ and $\nu_0\in\mathbb{R}$, in terms of (\ref{1.9}) one has that
\begin{equation}\label{1.10}
 \varphi(t,x)=Q_\omega(x-x_0-v_0t)e^{i(\omega t+\nu_0+\frac{1}{2}v_0x-\frac{1}{4}\lvert v_0\rvert^2t)}
\end{equation}
is also a soliton of (\ref{1.1}).
By \cite{KOPV2017,S2021,CS2021,JJTV2022,JL2022,LR2020}, orbital stability of solitons with regard to every frequency  for (\ref{1.1}) is a crucial open problem.

So far there are two ways to study stability of solitons for nonlinear  Schr\"{o}dinger equations (refer to \cite{L2009}). One is variational approach originated from Cazenave and Lions \cite{CL1982}. The other is spectrum approach originated from Weinstein \cite{W1985,W1986} and then considerably generalized by Grillakis, Shatah and Strauss \cite{GSS1987,GSS1990}. Both approaches have encountered essential difficulties to (\ref{1.1}), since (\ref{1.1}) fails in both scaling invariance and effective spectral analysis. We need develop new methods to study stability of solitons for (\ref{1.1}).

In the following, we denote $\int_{\mathbb{R}^3}\cdot\ dx$ by $\int \cdot \ dx$ and $\lvert\lvert \ \cdot \ \rvert \rvert_{L^2(\mathbb{R}^3)}$ by $\lvert\lvert \ \cdot\ \rvert \rvert_{L^2}$ .
For $u\in H^1(\mathbb{R}^3)\backslash\{0\}$, define the Weinstein functional (refer to \cite{W1983})
\begin{equation}\label{1.4}
F(u)=\frac{\lvert\lvert u\rvert \rvert_{L^2}\lvert\lvert u\rvert \rvert_{L^6(\mathbb{R}^3)}^{\frac{3}{2}}\lvert\lvert \nabla u\rvert \rvert_{L^2}^{\frac{3}{2}}}
{\lvert\lvert u\rvert \rvert_{L^4(\mathbb{R}^3)}^{4}}
\end{equation}
and the variational problem
\begin{equation}\label{1.5}
d_0=\mathop{\mathrm{inf}}_{\{u\in H^1(\mathbb{R}^3)\backslash\{0\}\}} F(u).
\end{equation}
In \cite{KOPV2017}, Killip, Oh, Pocovnicu and Visan show that (\ref{1.5}) is achieved at some positive symmetric minimizer $\phi$. Thus $\phi$ satisfies the  Euler-Lagrange equation corresponding to (\ref{1.5}), which is the same as (\ref{1.2}) with certain $\omega=\omega_\phi\in(0,\frac{3}{16})$. It is obvious that uniqueness of the positive minimizers of (\ref{1.5}) is uncertain. But all positive minimizers of (\ref{1.5}) have the common mass (see \cite{KOPV2017})
\begin{equation}\label{1.6}
\rho=\lvert\lvert \phi\rvert \rvert_{L^2}^{2}=\frac{64}{9}d^2_0.
\end{equation}

For $u\in H^1(\mathbb{R}^3)$, define the mass functional
\begin{equation}\label{1.29}
M(u)=\int\lvert u\lvert^{2}dx,
\end{equation}
and define the energy functional
\begin{equation}\label{1.7}
E(u)=\int \frac{1}{2}\lvert \nabla u\rvert^2-\frac{1}{4}\lvert u\rvert^4+\frac{1}{6}\lvert u\rvert^6dx.
\end{equation}
Let $m>0$, and consider the variational problem
\begin{equation}\label{1.8}
d_m=\mathop{\mathrm{inf}}_{\{u\in H^1(\mathbb{R}^3),\;\;  M(u)=m\}} E(u).
\end{equation}
For the common mass $\rho$ in (\ref{1.6}), Killip, Oh, Pocovnicu and Visan \cite{KOPV2017} show the following results. When
\begin{equation}
0<m<\rho,
\end{equation}
(\ref{1.8}) is not achieved. When
\begin{equation}
m\geq\rho,
\end{equation}
(\ref{1.8}) is achieved at some positive minimizer $\psi_m$. Thus for $m\geq\rho$,
there exists the Lagrange multiplier $\omega_{\psi_m}\in\mathbb{R}$ such that $\psi_m$ satisfies the corresponding Euler-Lagrange equation, which is the same as (\ref{1.2}) with $\omega=\omega_{\psi_m}$.  It is clear that uniqueness of the positive minimizers of (\ref{1.8}) with $m\geq \rho$ is also uncertain. 

For $u\in H^{1}(\mathbb{R}^{3})$, define the Pohozaev functional (refer to \cite{P1965})
\begin{equation}\label{1.30}
I(u)=\int \frac{1}{3}\lvert \nabla u\lvert^{2}-\frac{1}{4}\lvert u\lvert^{4}
+\frac{1}{3}\lvert u\lvert^{6}dx.
\end{equation}
Let $m>0$, define the variational problem
\begin{equation}\label{1.31}
d_{m}^{I}=\inf_{\{u\in H^{1}(\mathbb{R}^{3}), \; M(u)=m, \; I(u)=0\}}E(u).
\end{equation}
For the common mass $\rho$ in (\ref{1.6}), Killip, Oh, Pocovnicu and Visan \cite{KOPV2017} get the following results. When
\begin{equation}\label{1.32}
0<m<\frac{4}{3\sqrt{3}} \rho ,
\end{equation}
one has that $d_{m}^{I}=\infty$ and (\ref{1.31}) is not achieved. When
\begin{equation}\label{1.33}
\frac{4}{3\sqrt{3}} \rho\leq m<\rho ,
\end{equation}
one has that $0<d_{m}^{I}<\infty$ and (\ref{1.31}) is achieved at some positive minimizer $\psi$. When
\begin{equation}
m\geq \rho,
\end{equation}
one has that $d_{m}^{I}=d_{m}$ and (\ref{1.31}) is achieved at some positive minimizer $\psi_{m}$. Thus for $m\geq \frac{4}{3\sqrt{3}} \rho$,  (\ref{1.31}) is always achieved at some positive minimizer $\psi$,  we call it the energy minimizer for (\ref{1.1}). It follows that there exist the Lagrange multipliers $\mu\geq 0$ and $\nu\in\mathbb{R}$ such that $\psi$ satisfies the Euler-Lagrange equation
\begin{equation}\label{1.35}
-\Delta \psi+\psi^{5}-\psi^{3}+\mu(-2\Delta \psi+6\psi^{5}-3\psi^{3})+2\nu \psi=0.
\end{equation}
Under the change of variables $\psi(x)=av(\lambda x)$ with
\begin{equation}\label{1.36}
a^{2}=\frac{1+3\mu}{1+6\mu},\;\; \lambda^{2}=\frac{(1+3\mu)^{2}}{(1+2\mu)(1+6\mu)},\;\;\; \omega=\frac{2\nu(1+6\mu)}{(1+3\mu)^{2}},
\end{equation}
(\ref{1.35}) becomes
\begin{equation}\label{1.37}
-\Delta v+v^{5}-v^{3}+\omega v=0.
\end{equation}
Then one concludes that $v=Q_{\omega}$ up to a translation for certain $\omega\in (0, \frac{3}{16})$. Moreover we conclude that there exists
\begin{equation}\label{1.120}
m_{0}=\inf_{\{0<\omega<\frac{3}{16}\}}M(Q_{\omega})
\end{equation}
such that
\begin{equation}\label{1.38}
\frac{4}{3\sqrt{3}} \rho\leq m_{0}\leq\rho.
\end{equation}

It is clear that uniqueness of the positive minimizers of (\ref{1.31}) with $m\geq \frac{4}{3\sqrt{3}} \rho$ is still uncertain. But for certain  minimizer $\psi$ of $d_{m}^{I}$, one can uniquely determine the Lagrange multiplier $\nu$ by (\ref{1.35}). Then in terms of $\psi$ and $\nu$, we introduce the variational problem
\begin{equation}\label{1.11}
d:=\mathop{\inf}\limits_ {\{u\in H^1(\mathbb{R}^{3}),\  0<\int\lvert u\rvert^2dx\leq\int
\lvert \psi\rvert^2dx,\  I(u)=0\}
}
[E(u)+\nu\int\lvert u\rvert^2dx].
\end{equation}
We see that $d$ is related to $d_{m}^{I}$ and the functional $E(u)+\nu\int\lvert u\rvert^2dx$ is constrained to the flow $\{u\in H^1(\mathbb{R}^{3}),\  0<\int\lvert u\rvert^2dx\leq\int
\lvert \psi\rvert^2dx\}$ with $I(u)=0$. Therefore we call (\ref{1.11}) the cross-constrained variational problem.

We will prove that the infimum $d$ in (\ref{1.11}) is achieved and $\psi$
is the unique positive solution of (\ref{1.11}). By this result, we can prove that the energy minimizer for (\ref{1.1}), that is the positive solution of $d_{m}^{I}$, is unique. Furthermore we can prove that both the positive minimizer of (\ref{1.5}) and the positive minimizer of (\ref{1.8}) are also unique. Thus we get that the variational problems (\ref{1.5}), (\ref{1.8}), (\ref{1.31}) and (\ref{1.11}) all possess uniqueness of the positive minimizers.

For $m\geq \frac{4}{3\sqrt{3}}\rho$, let $\psi$ be the unique energy minimizer, that is the positive minimizer of $d_{m}^{I}$. Then $\psi$ satisfies the Euler-Lagrange equation (\ref{1.35}). We conduct the change of variables $\psi(x)=av(\lambda x)$ with (\ref{1.36}). Then $v$ satisfies (\ref{1.37}). Thus we concludes that $v=Q_{\omega}$ up to a translation for some $0<\omega<\frac{3}{16}$, which we call the energy ground state. For every $m\geq \frac{4}{3\sqrt{3}}\rho$, we get a unique energy minimizer $\psi$ and a unique ground state $v=Q_{\omega}$ up to a translation for some $0<\omega<\frac{3}{16}$.
Then for $m_{0}$ defined by (\ref{1.120}), according to \cite{KOPV2017} and the above uniqueness of the positive minimizers, we
can uniquely determine $\omega_{\ast}\in (0, \frac{3}{16})$ such that
\begin{equation}\label{1.121}
M(Q_{\omega_{\ast}})= m_{0}=\inf_{\{0<\omega<\frac{3}{16}\}}M(Q_{\omega}).
\end{equation}
In terms of Cazenave and Lions' variational arguments \cite{CL1982}, we can prove that the energy ground states are orbitally stable. Of course we now can not determine for which $\omega$ the energy ground state is orbitally stable in general. But specially we can get that $Q_{\omega_{\ast}}$ is orbitally stable.

Now we develop a method, which is parallel to the complex analysis, to prove that the zero point of $\frac{d}{d\omega}M(Q_{\omega})$ is isolated. Based on this result, according to the spectral approach,  we develop a set of geometric analysis methods. Then we can prove the monotonicity conjecture and stability or instability of solitons at every frequency.

Our results are concerned with the normalized solution of (\ref{1.2}), which is defined as a non-trivial solution of (\ref{1.2}) satisfying the prescribed mass $\int\lvert u\lvert^{2}dx=m$ (see \cite{JL2021}). Besides motivations in mathematical physics, normalized solutions are also of interest in the framework of ergodic Mean Field Games system \cite{CV2017}. Recent studies on normalized solutions refer to \cite{BMRV2021,JL2021,WW2022,S2021} and the references there. We can establish correspondences between the soliton frequency and the prescribed mass. Then we get a classification of normalized solutions.


The main results of this paper read as follows:
\begin{theorem}\label{t1.1}
For the common mass $\rho$ introduced in (\ref{1.6}), let $m\geq\frac{4}{3\sqrt{3}}\rho$. Then the variational problems (\ref{1.5}), (\ref{1.8}), (\ref{1.31}) and (\ref{1.11}) all possess the unique positive minimizer up to translations. Moreover let $m_{0}=\inf_{\{0<\omega<\frac{3}{16}\}}M(Q_{\omega})$. Then $\frac{4}{3\sqrt{3}}\rho\leq m_{0}\leq\rho$ and there exists a unique $\omega_{\ast}\in (0, \frac{3}{16})$ such that $M(Q_{\omega_{\ast}})=m_{0}$.
\end{theorem}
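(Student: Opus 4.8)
The plan is to make the cross-constrained problem (\ref{1.11}) the keystone: once $\psi$ is shown to be its unique positive minimizer, uniqueness for (\ref{1.31}), (\ref{1.8}) and (\ref{1.5}) follows by short comparison arguments, and the statement about $m_0$ and $\omega_\ast$ is read off from the resulting structure together with \cite{KOPV2017}. First I would settle existence for (\ref{1.11}). Along any minimizing sequence $\{u_n\}$ the Pohozaev constraint $I(u_n)=0$ gives, via $\int u^4\,dx\le \|u\|_{L^2}(\int u^6\,dx)^{1/2}$, the bound $\int u_n^6\,dx\le \tfrac{9}{16}M(u_n)\le\tfrac{9}{16}M(\psi)$; then $\int u_n^4\,dx$ and $\|\nabla u_n\|_{L^2}^2$ are controlled by the mass ceiling, so $\{u_n\}$ is bounded in $H^1$. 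A concentration--compactness argument then yields a nonzero limit; here the decisive point is that the mass enters as an inequality $0<M(u)\le M(\psi)$ rather than an equality, which removes the dichotomy and vanishing alternatives and secures precompactness up to translation. Schwarz symmetrization lets me take the minimizer positive and radial.

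For the uniqueness half --- the genuine heart of the matter --- I would argue that any positive minimizer $u_\ast$ of (\ref{1.11}) solves an Euler--Lagrange equation of the form (\ref{1.35}); performing the normalization (\ref{1.36})--(\ref{1.37}) reduces it to (\ref{1.37}), so by Berestycki--Lions \cite{BL1983} and the Serrin--Tang uniqueness \cite{ST2000} of the positive solution of (\ref{1.2}), $u_\ast$ is a rescaled ground state $Q_\omega$. The work is then to show that the multipliers and the scaling of $u_\ast$ are forced to equal those of $\psi$: the coefficient $\nu$ in (\ref{1.11}) is frozen to the value attached to $\psi$, $\psi$ is itself admissible and attains $d$, and on the cross-constraint the multiplier associated with the mass ceiling is sign-definite, which excludes any competitor of strictly smaller mass. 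This pins $u_\ast=\psi$ up to translation.

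Granting this, the cascade is immediate. If $\psi_1,\psi_2$ both minimize $d_m^I$ then both have mass $m$ and satisfy $I=0$, so each is admissible in the instance of (\ref{1.11}) built from $\psi_1$; since $E(\psi_i)=d_m^I$ and $M(\psi_i)=m$, both realize the value $E(\psi_1)+\nu M(\psi_1)=d$ and hence are minimizers of (\ref{1.11}), forcing $\psi_2=\psi_1$. Thus (\ref{1.31}) has a unique positive minimizer for every $m\ge\frac{4}{3\sqrt{3}}\rho$. For (\ref{1.8}): when $m\ge\rho$ one has $d_m^I=d_m$ with the same minimizer, so uniqueness transfers verbatim (for $\frac{4}{3\sqrt{3}}\rho\le m<\rho$ the problem is not attained and the claim is vacuous). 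For the scale-invariant problem (\ref{1.5}): by \cite{KOPV2017} its normalized positive minimizer has mass $\rho$ and coincides with the energy minimizer of $d_\rho^I$, whose uniqueness has just been shown.

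Finally, the bounds $\frac{4}{3\sqrt{3}}\rho\le m_0\le\rho$ are exactly (\ref{1.38}). Since $\omega\mapsto Q_\omega$ is continuous into $H^1$ (from uniqueness and nondegeneracy of the ground state), the mass $\omega\mapsto M(Q_\omega)$ is continuous on $(0,\tfrac{3}{16})$, and because it diverges as $\omega\to0^+$ and $\omega\to(\tfrac{3}{16})^-$ the infimum $m_0$ is attained at an interior $\omega_\ast$, which is (\ref{1.121}). For the \emph{uniqueness} of $\omega_\ast$ I would reduce to showing that the global minimum of $M(Q_\omega)$ is attained at a single point; since $\omega\mapsto Q_\omega$ is injective (subtracting two copies of (\ref{1.2}) gives $(\omega_1-\omega_2)Q=0$), this is equivalent to unimodality of the mass curve, which I would extract from the isolated-zero property of $\frac{d}{d\omega}M(Q_\omega)$ together with the correspondence between the minimal mass and the unique energy minimizer. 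I expect this last point, and the exclusion of mass drop in the uniqueness for (\ref{1.11}), to be the two principal obstacles.
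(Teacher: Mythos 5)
Your architecture is exactly the paper's: make the cross-constrained problem (\ref{1.11}) the keystone, deduce uniqueness for (\ref{1.31}) by a comparison argument, transfer it to (\ref{1.8}) and (\ref{1.5}), and then extract $\omega_\ast$. Some details are even cleaner than the paper's (your $H^1$-bound for minimizing sequences from $I(u_n)=0$, and your one-instance comparison for $d_m^I$ in place of the paper's symmetric two-instance contradiction in its Theorem \ref{t4.1}). But the two steps you yourself flag as ``principal obstacles'' are precisely where the proof lives, and your proposal does not supply the mechanisms. For the exclusion of mass drop, the paper's tool is Proposition \ref{p2.16}(IV): $m\mapsto d_m^I$ is \emph{strictly decreasing}, so a minimizer $\xi$ of (\ref{1.11}) with $M(\xi)=m'<m$ would minimize $E$ at the mass level $m'$, giving $E(\xi)=d_{m'}^I>d_m^I$, which is then played against the admissibility of $\psi$; your ``sign-definite multiplier for the mass ceiling'' is a KKT heuristic, not an argument --- if the mass constraint is inactive the first-order conditions produce no extra multiplier at all, and they cannot by themselves exclude an interior minimum of $m''\mapsto d_{m''}^I+\nu m''$ on $(0,m]$. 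Note also that the whole cascade (your statement ``$\psi$ is itself admissible and attains $d$'') rests on this unproved step. Relatedly, even after knowing every positive minimizer of (\ref{1.11}) is a rescaled $Q_\omega$, you must still show its Lagrange multipliers coincide with those of $\psi$; the paper does this by pairing the Pohozaev identities of $\xi$ and $\psi$ to force $\Lambda=\mu$, hence the same $\omega$ in (\ref{3.63}), with Proposition \ref{p2.19} handling the case $\Lambda=0$, and then invokes Serrin--Tang uniqueness. Your proposal names this as ``the work'' but does not do it.

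The second gap is structural. For uniqueness of $\omega_\ast$ you propose unimodality of $\omega\mapsto M(Q_\omega)$, ``extracted from the isolated-zero property of $\frac{d}{d\omega}M(Q_\omega)$ together with the correspondence.'' Unimodality of the mass curve \emph{is} Theorem \ref{t1.2}, i.e.\ the monotonicity conjecture, which in the paper is a \emph{consequence} of Theorem \ref{t1.1}: its proof in Sections 6--8 needs the uniqueness of minimizers, the Grillakis--Shatah--Strauss spectral input (Proposition \ref{p6.2}, Theorem \ref{t4.13}), and a continuation argument; the isolated-zero property only says the critical set of $M(Q_\omega)$ is discrete and by itself does not preclude several isolated global minima. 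So your route is either circular or amounts to proving the monotonicity conjecture first, which your sketch does not do. The paper instead obtains uniqueness of $\omega_\ast$ \emph{before} any monotonicity, via the rescaling correspondence: a minimum point $\omega_0$ gives $m=M(R_{\omega_0})\ge\frac{4}{3\sqrt 3}\rho$, the unique positive minimizer of $d_m^I$ rescales through (\ref{3.2}) to a ground state $Q_{\omega_\ast}$, and the mass identity of Theorem \ref{t4.14} gives $M(Q_{\omega_\ast})=m_0$ (Theorem \ref{t4.25}). Finally, a small correction on existence: the inequality constraint $M(u)\le M(\psi)$ does not remove the vanishing alternative; vanishing is excluded because $I(u_n)=0$ together with the mass ceiling and Gagliardo--Nirenberg forces $\int|u_n|^4\,dx\ge c>0$, and the paper in fact argues the cases $d\ge 0$ (profile decomposition) and $d<0$ (symmetrization plus radial compactness) separately.
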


\begin{theorem}\label{t1.3}
Let $\omega\in (0, \frac{3} {16})$ and $Q_{\omega}(x)$ be the positive solution of (\ref{1.2}). Then there exists a unique $\omega_{\ast}$ such that $0<\omega_{\ast}<\frac{3}{16}$. When $\omega\in (0, \omega_{\ast})$, the soliton $Q_{\omega}(x)e^{i\omega t}$ of (\ref{1.1}) is orbitally unstable; When $\omega\in [\omega_{\ast}, \frac{3}{16})$, the soliton $Q_{\omega}(x)e^{i\omega t}$ of (\ref{1.1}) is orbitally stable.
\end{theorem}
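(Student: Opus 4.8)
The plan is to combine the spectral (Grillakis--Shatah--Strauss / Vakhitov--Kolokolov) stability criterion with a global analysis of the single scalar function $\omega\mapsto M(Q_\omega)$, using Theorem~\ref{t1.1} to settle the borderline frequency. First I would record the Hamiltonian structure of (\ref{1.1}): $E$ is conserved by the flow and $M$ by the phase invariance, and $Q_\omega$ is a critical point of the action $S_\omega(u)=E(u)+\tfrac{\omega}{2}M(u)$, so that $S_\omega'(Q_\omega)=0$. Linearizing at $Q_\omega$ and separating real and imaginary parts yields the self-adjoint operators
\begin{equation}
L_+=-\Delta-3Q_\omega^2+5Q_\omega^4+\omega,\qquad L_-=-\Delta-Q_\omega^2+Q_\omega^4+\omega.
\end{equation}
Using that $Q_\omega>0$ is the unique positive ground state of (\ref{1.2}), I would verify the spectral hypotheses of the spectral approach: $L_-\ge 0$ with $\ker L_-=\mathrm{span}\{Q_\omega\}$, and $L_+$ has Morse index one with $\ker L_+=\mathrm{span}\{\partial_{x_1}Q_\omega,\partial_{x_2}Q_\omega,\partial_{x_3}Q_\omega\}$. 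With these in hand the theory reduces orbital stability to the sign of the slope $\tfrac{d}{d\omega}M(Q_\omega)$: writing $d(\omega)=S_\omega(Q_\omega)$ one computes, using $S_\omega'(Q_\omega)=0$, that $d'(\omega)=\tfrac12 M(Q_\omega)$, so $Q_\omega e^{i\omega t}$ is orbitally stable when $\tfrac{d}{d\omega}M(Q_\omega)>0$ and orbitally unstable when $\tfrac{d}{d\omega}M(Q_\omega)<0$.

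The heart of the matter is then the global shape of $\omega\mapsto M(Q_\omega)$ on $(0,\tfrac{3}{16})$. Because (\ref{1.1}) possesses neither scaling invariance nor an explicit profile, this function cannot be computed; instead I would show that $\omega\mapsto Q_\omega$ is real-analytic via the implicit function theorem applied to (\ref{1.2}): the kernel of $L_+$ is spanned by the non-radial translations $\partial_{x_j}Q_\omega$, so $L_+$ is invertible on the radial sector in which $Q_\omega$ lives, and $M(Q_\omega)$ is then real-analytic in $\omega$. This is where the announced method \emph{parallel to complex analysis} enters: a non-constant real-analytic function has only isolated critical points, so the zero set of $\tfrac{d}{d\omega}M(Q_\omega)$ is discrete. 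I would then argue, through the spectral and geometric analysis, that across each such zero the slope $\tfrac{d}{d\omega}M(Q_\omega)$ can only change sign from negative to positive, i.e. every critical point of $M(Q_\omega)$ is a strict local minimum. Since two local minima on an interval would enclose a local maximum, the critical point is unique; matching it to the unique global minimizer $\omega_*$ with $M(Q_{\omega_*})=m_0$ supplied by Theorem~\ref{t1.1}, I conclude that $M(Q_\omega)$ is strictly decreasing on $(0,\omega_*)$ and strictly increasing on $(\omega_*,\tfrac{3}{16})$, i.e. $\tfrac{d}{d\omega}M(Q_\omega)<0$ on $(0,\omega_*)$ and $\tfrac{d}{d\omega}M(Q_\omega)>0$ on $(\omega_*,\tfrac{3}{16})$.

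Feeding this dichotomy into the spectral criterion of the first paragraph gives instability for $\omega\in(0,\omega_*)$ and stability for $\omega\in(\omega_*,\tfrac{3}{16})$. The single frequency the spectral approach cannot resolve is the transition $\omega_*$ itself, where $\tfrac{d}{d\omega}M(Q_{\omega_*})=0$ and the Vakhitov--Kolokolov test is inconclusive; here I would argue directly by the variational method. After the change of variables (\ref{1.36}), $Q_{\omega_*}$ is the energy minimizer of $d^{I}_{m}$ at the critical mass $m_0$, so precompactness of minimizing sequences modulo the symmetries of (\ref{1.1}), in the spirit of Cazenave and Lions, yields orbital stability of $Q_{\omega_*}$ directly. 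This closes the endpoint $\omega=\omega_*$ and completes the dichotomy of Theorem~\ref{t1.3}.

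The principal obstacle is the global monotonicity of $M(Q_\omega)$, which is precisely the content of the monotonicity conjecture. Establishing that the zeros of $\tfrac{d}{d\omega}M(Q_\omega)$ are isolated is delicate because the standard devices (scaling, explicit profiles, one-dimensional Sturm oscillation) are unavailable for (\ref{1.1}); and upgrading a merely discrete critical set to a genuine U-shape, by showing each critical point is a nondegenerate local minimum, is where the spectral information at the transition must be married to the variational characterization of $\omega_*$. The degenerate endpoint, which forces the hybrid variational/spectral structure of the argument, is a secondary but unavoidable difficulty.
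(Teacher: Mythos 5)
Your overall architecture --- the Grillakis--Shatah--Strauss/Vakhitov--Kolokolov criterion, real-analyticity of $\omega\mapsto M(Q_\omega)$ forcing isolated zeros of $\frac{d}{d\omega}M(Q_\omega)$, and a Cazenave--Lions variational argument at the transition frequency $\omega_*$ --- coincides with the paper's (Sections 5--8). But the pivotal step is missing. You write that you would ``argue, through the spectral and geometric analysis, that across each such zero the slope can only change sign from negative to positive, i.e.\ every critical point of $M(Q_\omega)$ is a strict local minimum,'' and everything downstream (uniqueness of the critical point, the U-shape, the sign dichotomy) hangs on that sentence; yet no mechanism is given, and this claim is precisely the content of the monotonicity conjecture that the theorem is supposed to resolve. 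The paper's actual mechanism is quite specific: if the slope were ever positive at some $\omega^*<\omega_*$, then using $M(Q_\omega)\to\infty$ at both endpoints and the intermediate value theorem one manufactures a second frequency $\omega'\in(\omega_*,\frac{3}{16})$ with $M(Q_{\omega'})=M(Q_{\omega^*})$ and positive slope; Proposition \ref{p6.2} (from GSS theory) then says both $Q_{\omega^*}$ and $Q_{\omega'}$ are positive minimizers of the \emph{same} mass-constrained problem $\inf\{E(u):\int\lvert u\rvert^2dx=M(Q_{\omega^*})\}$, and Theorem \ref{t4.13} --- uniqueness of such minimizers, which is the payoff of the new cross-constrained variational problem --- forces $Q_{\omega^*}\equiv Q_{\omega'}$ up to translation, a contradiction with $\omega^*\neq\omega'$. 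You never deploy the uniqueness theorem in this role; you only use Theorem \ref{t1.1} to label $\omega_*$ as the global minimizer of the mass, which is far weaker and does not rule out sign changes of the slope elsewhere.

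A second, related defect: your claim that every critical point is a strict local minimum is stronger than what is provable, and your plan therefore silently skips the degenerate cases. The paper cannot exclude isolated interior zeros of $\frac{d}{d\omega}M(Q_\omega)$ on either side of $\omega_*$ (such points would be inflection points of a strictly monotone function, not local minima); it only shows the derivative never changes sign there (Theorems \ref{t7.3} and \ref{t8.1}). Consequently the stability/instability conclusions at such degenerate frequencies require separate arguments for which your proposal has no room: on $(0,\omega_*)$ the paper invokes Theorem \ref{t6.4} (instability from non-convexity of $d(\omega)$, via Corollary 5.1 of Grillakis--Shatah--Strauss, since the plain slope test is silent when $\frac{d}{d\omega}M(Q_\omega)=0$), and on $(\omega_*,\frac{3}{16})$ it runs a variational identification (second case of Theorem \ref{t8.3}) showing that such a $Q_{\omega_0}$ is itself an energy ground state, hence stable. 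Without the uniqueness-based sign rigidity and without these degenerate-point arguments, your proof does not close.
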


\begin{theorem}\label{t1.2}
Let $M(Q_{\omega})=\int Q_{\omega}^{2}dx$ for the positive solution $Q_{\omega}$ of (\ref{1.2}) with $\omega\in (0, \frac{3} {16})$. Then there is an $\omega_*\in (0, \frac{3} {16})$ so that $\omega\rightarrow M(Q_{\omega})$ is strictly decreasing for $\omega<\omega_{\ast}$ and strictly increasing for $\omega>\omega_{\ast}$.
\end{theorem}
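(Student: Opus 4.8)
The plan is to realize the mass--frequency map $\omega\mapsto M(Q_\omega)$ as a real-analytic function on $(0,\tfrac{3}{16})$ and to reduce the monotonicity statement to a single spectral sign condition. For each $\omega$ I would fix the linearized operators associated with \eqref{1.2},
\begin{equation*}
L_{+,\omega}=-\Delta+\omega-3Q_\omega^{2}+5Q_\omega^{4},\qquad L_{-,\omega}=-\Delta+\omega-Q_\omega^{2}+Q_\omega^{4},
\end{equation*}
and record that $L_{-,\omega}Q_\omega=0$ with $Q_\omega>0$, so $L_{-,\omega}\ge 0$ with kernel $\mathrm{span}\{Q_\omega\}$, while on radial functions $L_{+,\omega}$ is non-degenerate. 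This is where the uniqueness of $Q_\omega$ invoked after \eqref{1.3} enters, through the implicit function theorem: non-degeneracy yields real-analyticity of $\omega\mapsto Q_\omega$ into $H^1_{\mathrm{rad}}(\mathbb{R}^3)$, hence of $\omega\mapsto M(Q_\omega)$. Differentiating \eqref{1.2} in $\omega$ gives $L_{+,\omega}\partial_\omega Q_\omega=-Q_\omega$, and therefore the Vakhitov--Kolokolov identity
\begin{equation*}
\frac{d}{d\omega}M(Q_\omega)=2\langle Q_\omega,\partial_\omega Q_\omega\rangle=-2\langle L_{+,\omega}^{-1}Q_\omega,Q_\omega\rangle,
\end{equation*}
where $L_{+,\omega}^{-1}Q_\omega$ is well defined since $Q_\omega$ is radial and orthogonal to $\ker L_{+,\omega}=\mathrm{span}\{\partial_{x_j}Q_\omega\}$. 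The whole theorem thus becomes a statement about where the quadratic form $\langle L_{+,\omega}^{-1}Q_\omega,Q_\omega\rangle$ vanishes and how it changes sign.

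Next I would pin down the two endpoints to anchor the sign of the derivative. As $\omega\to 0^+$ the far-field equation degenerates to $-\Delta Q\approx Q^3$, whose decay is only algebraic ($Q_\omega\sim|x|^{-1}$), so $M(Q_\omega)\to+\infty$; as $\omega\to(\tfrac{3}{16})^-$ the effective well $G(u)=\tfrac14u^4-\tfrac16u^6-\tfrac\omega2u^2$ flattens, its maximum vanishing exactly at $\omega=\tfrac{3}{16}$, so $Q_\omega$ develops a spreading plateau at height $\sqrt{3}/2$ and again $M(Q_\omega)\to+\infty$. Hence $\tfrac{d}{d\omega}M(Q_\omega)<0$ near $0^+$ and $>0$ near $(\tfrac{3}{16})^-$. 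Combined with Theorem \ref{t1.1}, which already furnishes a unique global minimizer $\omega_*$ of $M(Q_\cdot)$ with $M(Q_{\omega_*})=m_0$, this gives at least one critical point together with the correct sign pattern at the extremes.

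The heart of the matter, and the step I expect to be the main obstacle, is to show that \emph{every} critical point of $M(Q_\cdot)$ is a strict local minimum --- equivalently, that $\tfrac{d}{d\omega}M(Q_\omega)$ can change sign only from $-$ to $+$ as $\omega$ increases, never from $+$ to $-$. I would first establish that the zeros of the analytic map $\omega\mapsto\tfrac{d}{d\omega}M(Q_\omega)$ are isolated; since $M(Q_\cdot)$ is non-constant this in principle follows from real-analyticity, but making the analyticity quantitative --- controlling $L_{+,\omega}^{-1}$ uniformly and excluding non-transversal degeneracies of the zero --- is where a genuinely new input is needed, playing the role the argument principle plays in complex analysis. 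Granting isolation, I would run the spectral Morse-index bookkeeping of Grillakis--Shatah--Strauss: with $n(L_{+,\omega})=1$ and $L_{-,\omega}\ge0$ fixed along the branch, the constrained Morse index of the action at $Q_\omega$ equals $1$ where $\tfrac{d}{d\omega}M(Q_\omega)<0$ and $0$ where it is positive. The decisive point is that this index cannot increase as $\omega$ grows, which forbids a $+\!\to\!-$ crossing and hence any secondary, non-global local minimum; proving this one-directionality of the index along the branch is the real difficulty and is precisely what the geometric analysis must supply.

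Finally I would assemble the pieces. Because the zeros of $\tfrac{d}{d\omega}M(Q_\omega)$ are isolated and each is a strict local minimum, there can be at most one: two strict local minima would force an intervening local maximum, i.e.\ a $+\!\to\!-$ crossing, which the previous step excludes. Hence the unique critical point is the global minimizer $\omega_*$ of Theorem \ref{t1.1}; since $M(Q_\cdot)$ has no critical point on $(0,\omega_*)$ and attains a minimum at $\omega_*$ from the left, continuity forces $\tfrac{d}{d\omega}M(Q_\omega)<0$ throughout $(0,\omega_*)$, and symmetrically $\tfrac{d}{d\omega}M(Q_\omega)>0$ on $(\omega_*,\tfrac{3}{16})$. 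This gives the strict monotonicity on each side and completes the proof.
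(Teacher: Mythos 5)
Your proposal has the right skeleton and it matches the paper's: real-analyticity of $\omega\mapsto M(Q_\omega)$ (Proposition \ref{p2.2a}) gives isolated zeros of $\frac{d}{d\omega}M(Q_\omega)$ (Propositions \ref{p7.1}--\ref{p7.2}), divergence of the mass at both endpoints anchors the picture, and everything reduces to forbidding a $+\!\to\!-$ sign change of $\frac{d}{d\omega}M(Q_\omega)$. But at exactly that decisive step you write that the one-directionality of the constrained Morse index ``is the real difficulty and is precisely what the geometric analysis must supply'' --- i.e.\ you do not prove it, and it is not a standard fact that can be quoted. There is no general principle that the GSS index is monotone along a solution branch; for double-power NLS in other regimes the mass curve can oscillate, so any proof must use a global, problem-specific input. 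This is a genuine gap, and it is the whole content of the theorem: without it the argument principle/analyticity part only yields isolated zeros, not the sign pattern.

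The paper closes this gap with a mechanism absent from your proposal: uniqueness of positive minimizers of the mass-constrained energy problem (\ref{1.8}) (Theorem \ref{t4.13}), which is itself obtained from uniqueness for (\ref{1.31}) (Theorem \ref{t4.1}) via the new cross-constrained variational problem of Section \ref{sec3} (Theorem \ref{t3.3}). Granted this, a $+\!\to\!-$ crossing is excluded as follows (Theorems \ref{t7.3} and \ref{t8.1}): if it occurred, then using $M(Q_\omega)\to\infty$ at both endpoints and continuity one finds two \emph{distinct} frequencies $\omega^*\neq\omega'$ with $M(Q_{\omega^*})=M(Q_{\omega'})$ and $\frac{d}{d\omega}M(Q_\omega)>0$ at both; Proposition \ref{p6.2} (the GSS-type statement) then makes both $Q_{\omega^*}$ and $Q_{\omega'}$ positive minimizers of the same constrained problem, contradicting Theorem \ref{t4.13}. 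Your plan invokes Theorem \ref{t1.1} only to produce $\omega_*$, but never uses the uniqueness of constrained minimizers, which is the actual engine. Two smaller points: your claim that $M(Q_\omega)\to\infty$ at the endpoints forces $\frac{d}{d\omega}M(Q_\omega)<0$ pointwise near $0$ and $>0$ pointwise near $\frac{3}{16}$ is a non sequitur (divergence gives sign information only along sequences; the paper avoids needing any pointwise endpoint sign and only uses divergence in the intermediate-value argument); and ``every critical point is a strict local minimum'' is not equivalent to ``sign changes only from $-$ to $+$'' --- touch points where the derivative vanishes without changing sign are consistent with the latter and with strict monotonicity, which is why the paper proves the weaker, sufficient statement $\frac{d}{d\omega}M(Q_\omega)\le 0$ on $(0,\omega_*)$ with isolated zeros rather than nonexistence of interior critical points.
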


\begin{theorem}\label{t1.4}
There exists $m_{0}$ such that $\frac{4}{3\sqrt{3}}\rho\leq m_{0}\leq\rho$. When $m=m_{0}$, (\ref{1.2}) has a unique positive normalized solution with the prescribed mass $\int\lvert u\lvert^{2}dx=m_{0}$, which is just $Q_{\omega_{\ast}}$, where $\omega_{\ast}$ is the same as Theorem \ref{t1.3} and Theorem \ref{t1.2}. When $m> m_{0}$, (\ref{1.2}) just has two positive normalized solutions with the prescribed mass $\int\lvert u\lvert^{2}dx=m$. The one is $Q_{\omega_{1}}$ with $\omega_{1}\in (0, \omega_{\ast})$ and the other is $Q_{\omega_{2}}$ with $\omega_{2}\in (\omega_{\ast}, \frac{3}{16})$. When $m<m_{0}$, (\ref{1.2}) has no positive normalized solution with the prescribed mass $\int\lvert u\lvert^{2}dx=m$.
\end{theorem}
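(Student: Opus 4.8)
The plan is to reduce the classification entirely to a root-counting problem for the mass--frequency map and then supply the one genuinely analytic ingredient: the behaviour of the mass at the two endpoints of the admissible frequency interval. First I would record the correspondence. By Berestycki--Lions a nontrivial positive solution of (\ref{1.2}) exists precisely when $\omega\in(0,\frac{3}{16})$, and by Gidas--Ni--Nirenberg together with Serrin--Tang such a solution is radial and unique up to translation, hence equals $Q_\omega$. Consequently a positive normalized solution with prescribed mass $m$ is, up to translation, exactly a frequency $\omega\in(0,\frac{3}{16})$ with $g(\omega):=M(Q_\omega)=m$, and distinct frequencies give genuinely distinct solutions since they solve (\ref{1.2}) with different $\omega$. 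Thus counting positive normalized solutions of mass $m$ (modulo translation) is counting the roots in $(0,\frac{3}{16})$ of $g(\omega)=m$.

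Next I would import the shape of $g$ from the preceding theorems. By Theorem \ref{t1.1} the global infimum $m_{0}=\inf_{0<\omega<3/16}M(Q_\omega)$ satisfies $\frac{4}{3\sqrt{3}}\rho\le m_{0}\le\rho$ and is attained at a unique $\omega_\ast$, and by Theorem \ref{t1.2} the continuous map $g$ is strictly decreasing on $(0,\omega_\ast)$ and strictly increasing on $(\omega_\ast,\frac{3}{16})$ with $g(\omega_\ast)=m_{0}$. Strict monotonicity already forces at most one root on each branch and shows $g(\omega)=m_{0}$ only at $\omega=\omega_\ast$; combined with $g\ge m_{0}$ this yields at once that there is no positive normalized solution for $m<m_{0}$ and exactly one, namely $Q_{\omega_\ast}$, for $m=m_{0}$, matching the first two cases of the statement.

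The crux is the endpoint asymptotics $\lim_{\omega\to0^{+}}g(\omega)=\lim_{\omega\to(3/16)^{-}}g(\omega)=+\infty$, which promotes ``at most one root per branch'' to ``exactly one root per branch for every $m>m_{0}$'' via the intermediate value theorem. I would establish both limits from the Pohozaev and Nehari identities for $Q_\omega$, which (writing $A=\int Q_\omega^{2}$, $B=\int Q_\omega^{4}$, $C=\int Q_\omega^{6}$, $K=\int|\nabla Q_\omega|^{2}$) read $B=4\omega A$ and $K=3\omega A-C$, together with the Gagliardo--Nirenberg inequality $B\le C_{GN}K^{3/2}A^{1/2}$. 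Since these identities give $B=\omega A+C+K\ge K$, one obtains the uniform lower bound $KA\ge c_{0}>0$ independent of $\omega$. For $\omega\to0^{+}$, feeding $B=4\omega A$ into Gagliardo--Nirenberg gives $K\ge c\,\omega^{2/3}A^{1/3}$, and comparison with $K\le 3\omega A$ forces $A\ge c'\omega^{-1/2}\to+\infty$. For $\omega\to(\frac{3}{16})^{-}$ I would argue by contradiction: if $A$ stayed bounded along some sequence then $\{Q_\omega\}$ would be bounded in $H^{1}$, and the radial compact embedding $H^{1}_{\mathrm{rad}}\hookrightarrow L^{p}$ ($2<p<6$) would give a subsequential limit $Q_\ast$; passing to the limit in (\ref{1.2}) (the energy-critical term $Q_\omega^{5}$ being handled by weak-$L^{6/5}$ convergence via a.e.\ convergence) exhibits $Q_\ast$ as a solution at $\omega=\frac{3}{16}$, which must vanish by the nonexistence half of Berestycki--Lions, whence $B\to0$ and thus $A=B/(4\omega)\to0$, contradicting $KA\ge c_{0}$ and $K\le 3\omega A$.

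With the endpoint blow-up in hand the count is immediate: for $m>m_{0}$, on $(0,\omega_\ast)$ the continuous $g$ strictly decreases from $+\infty$ to $m_{0}$, so it hits $m$ exactly once at some $\omega_{1}\in(0,\omega_\ast)$, and on $(\omega_\ast,\frac{3}{16})$ it strictly increases from $m_{0}$ to $+\infty$, hitting $m$ exactly once at some $\omega_{2}\in(\omega_\ast,\frac{3}{16})$; these give the two asserted solutions $Q_{\omega_{1}}$ and $Q_{\omega_{2}}$. I expect the main obstacle to be the $\omega\to(\frac{3}{16})^{-}$ limit in the third step: the frequency there is precisely the Berestycki--Lions existence threshold, so one must rule out a nonvanishing bounded-mass limit using the nonexistence result while controlling the non-compact, energy-critical quintic nonlinearity, rather than relying on the clean algebraic blow-up available at $\omega\to0^{+}$.
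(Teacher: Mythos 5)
Your proposal is correct and follows essentially the same skeleton as the paper: reduce the classification to counting roots of $\omega\mapsto M(Q_\omega)=m$ on $(0,\frac{3}{16})$ (via uniqueness of $Q_\omega$ for each $\omega$), then combine the strict monotonicity on the two branches (the paper's Theorem \ref{t7.5} and Theorem \ref{t8.2}, i.e.\ Theorem \ref{t1.2}), the unique minimizing frequency $\omega_\ast$ from Theorem \ref{t1.1}, and divergence of the mass at both endpoints with the intermediate value theorem; this is exactly the content of the paper's Theorem \ref{t9.2} and Theorem \ref{t9.3}. The one genuine difference is how the endpoint asymptotics $M(Q_\omega)\to\infty$ as $\omega\to 0^{+}$ and $\omega\to(\frac{3}{16})^{-}$ are obtained: the paper simply cites them as Proposition \ref{p2.2a}(I), imported from Killip--Oh--Pocovnicu--Visan, whereas you prove them. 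Your derivations are sound: the identities $B=4\omega A$, $K\le 3\omega A$, $B\ge K$ combined with Gagliardo--Nirenberg do give $KA\ge c_0$ and then $A\gtrsim\omega^{-1/2}$ as $\omega\to 0^{+}$; and the contradiction argument at $\omega\to(\frac{3}{16})^{-}$ (bounded mass implies bounded $H^1$ since $K\le 3\omega A$, Strauss radial compactness gives strong $L^4$ convergence, the weak limit solves \eqref{1.2} at $\omega=\frac{3}{16}$ and hence vanishes by the Berestycki--Lions nonexistence threshold \eqref{1.3}, forcing $A=B/(4\omega)\to 0$ against $KA\ge c_0$) is also correct. What this buys you is self-containedness of the asymptotic ingredient at the cost of extra length; what the paper's citation buys is brevity, at the cost of leaning on \cite{KOPV2017} for precisely the ingredient you identified as the crux.
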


Theorem \ref{t1.1} gets uniqueness of the positive minimizers for $d_{0}$, $d_{m}$, $d_{m}^{I}$ and $d$. Thus Theorem \ref{t1.1} settles the questions raised by \cite{JL2022} and \cite{KOPV2017}, where uniqueness of the energy minimizers is proposed. Here we still denote the positive minimizers of $d_{m}^{I}$ by the energy minimizers, and denote the ground states corresponding to the energy minimizers by the energy ground states, which satisfy (\ref{1.37}) for certain $\omega \in (0,\frac{3}{16})$. We see that uniqueness of the energy minimizers is concerned with the monotonicity of $M(Q_{\omega})$ for $\omega \in (0,\frac{3}{16})$ and stability of solitons. Especially by the uniqueness of the energy minimizers, we can determine uniquely critical frequency $\omega_{\ast} \in (0,\frac{3}{16})$ such that for all $\omega \in (0,\frac{3}{16})$ one has $M(Q_{\omega_{\ast}})\leq M(Q_{\omega}) $.

Theorem \ref{t1.3} establishes sharp results for orbital stability of solitons of (\ref{1.1}) with regard to every frequency by the critical frequency $\omega_{\ast}$. Therefore Theorem \ref{t1.3} settles the questions raised by \cite{KOPV2017,S2021,CS2021,JJTV2022,JL2022,LR2020}, where the orbitally stability of solitons is only with regard to the set of ground states. Then Theorem \ref{t1.3} also answers the open problem on stability of solitons proposed by Tao \cite{T2009}.

Theorem \ref{t1.2} completely verifies the monotonicity conjecture proposed by
Killip, Oh, Pocovnicu and Visan (see Conjecture 2.3 in \cite{KOPV2017}). In \cite{LR2020}, Lewin and Rota Nodari get the exact behavior of the mass $M(Q_{\omega})$ and its derivative $\frac{d}{d\omega}M(Q_{\omega})$, and show the monotonicity close to both $0$ and $\frac{3}{16}$. They give a proof of the monotonicity conjecture at large mass. In fact, Lewin and Rota Nodari \cite{LR2020} consider more general situations. Therefore Lewin and Rota Nodari improve a lot about the monotonicity conjecture and make some advances. The related studies we also refer to Carles and Sparber \cite{CS2021} as well as Jeanjean et al \cite{JJTV2022,JL2021,JL2022}.

We see that Theorem \ref{t1.2} strongly suggest that Theorem \ref{t1.3} be true, but Theorem \ref{t1.2} can not derive Theorem \ref{t1.3}. In turn, Theorem \ref{t1.3} can not also derive Theorem \ref{t1.2}. In this paper by introducing and solving a new cross-constrained variational problem, we prove uniqueness of the energy minimizers. Then by Theorem \ref{t1.1}, we can determine a unique critical frequency $\omega_{\ast} \in (0,\frac{3}{16})$ such that $M(Q_{\omega_{\ast}})\leq  M(Q_{\omega})$ for all $\omega \in (0,\frac{3}{16})$. Thus we  develop a set of geometric analysis methods to prove sharp stability of  solitons and the monotonicity conjecture. We need to point out that uniqueness of the energy minimizers plays a key role in the proofs of Theorem \ref{t1.3} and Theorem \ref{t1.2}. We make some new advances in variational methods and bridge between variational method and spectral analysis.

Theorem \ref{t1.4} gives a complete classification about normalized solutions of (\ref{1.2}), which depends on establishing two correspondences between the frequency and the mass. Thus Theorem \ref{t1.4} settles the questions raised by \cite{BMRV2021,JL2021, WW2022, S2021}.

We see that (\ref{1.1}) is almost a perfect nonlinear dispersive model, which is global well-posedness in energy space and possesses scattering, stable solitons as well as unstable solitons without blow up. By \cite{KOPV2017} and Theorem \ref{t1.3} in this paper, the global dynamics of (\ref{1.1}) is more comprehensively described (also see \cite{MKV2021}). According to \cite{CMM2011,MM2006,MMT2006}, by Theorem \ref{t1.3} in this paper, multi-solitons of (\ref{1.1})
can be constructed (also refer to \cite{CL2011,BZ2022}). But the soliton resolution conjecture for (\ref{1.1}) is still open (see \cite{T2009}).

This paper is organized as follows. In section 2, we state some propositions about classic variational problems as well as (\ref{1.1}) and (\ref{1.2}). In section 3, we introduce a new cross-constrained variational problem and solve it by the profile decomposition theory \cite{HK2005} and Schwartz symmetric rearrangement theory \cite{LL2000}. This is the basic idea of our work. In section 4, we prove uniqueness of the  positive minimizers for (\ref{1.31}) as well as uniqueness of the positive minimizers for both (\ref{1.5}) and (\ref{1.8}), which is the key result of our work. Then we get Theorem \ref{t1.1}.
In section 5, we prove stability of the energy ground states.
In section 6, we state some spectral properties about (\ref{1.1}) and (\ref{1.2}). In section 7 and 8, we combine the spectral approach with the variational method and develop a geometric analysis for the mass functional. Then we get sharp stability threshold of solitons  for (\ref{1.1}) and completely resolve the monotonicity conjecture proposed by Killip, Oh, Pocovnicu and Visan \cite{KOPV2017}. Thus we get the proofs of Theorem \ref{t1.3} and Theorem \ref{t1.2}. In section 9, by establishing correspondences between the frequency and the mass, we prove Theorem \ref{t1.4}.

\section{Classic variational problems}\label{sec2}


According to  Killip, Oh, Pocovnicu and Visan \cite{KOPV2017}, the following propositions are true.
\begin{proposition}\label{p2.3}
The variational problem (\ref{1.5}) is achieved at some positive minimizer $\phi$. In addition, every positive minimizer $\phi$ of (\ref{1.5}) satisfies (\ref{1.2}) with some $\omega=\omega_\phi\in(0,\frac{3}{16})$. Moreover all positive minimizers of (\ref{1.5}) have the common mass:
$$\rho=\lvert\lvert \phi\rvert \rvert_{L^2}^2=\frac{64}{9}d_0^2$$
and zero energy: $E(\phi)=0$.
For arbitrary $u\in H^1(\mathbb{R}^3)$, one has the Gagliardo-Nirenberg-H\"{o}lder inequality
$$ \lvert\lvert u\rvert \rvert^4_{L^4(\mathbb{R}^3)}\leq\frac{8}{3}\rho^{-\frac{1}{2}}
\lvert\lvert u\rvert \rvert_{L^2}\cdot \lvert\lvert u\rvert \rvert^{\frac{3}{2}}_{L^6(\mathbb{R}^3)}\cdot \lvert\lvert \nabla u\rvert \rvert^{\frac{3}{2}}_{L^2},
$$
and the interpolation inequality
$$ \lvert\lvert u\rvert \rvert^4_{L^4(\mathbb{R}^3)}\leq\frac{8}{3}\rho^{-\frac{1}{2}}
\lvert\lvert u\rvert \rvert_{L^2}\cdot \left[\frac{1}{4}\lvert\lvert u\rvert \rvert^{6}_{L^6(\mathbb{R}^3)}+\frac{3}{4} \lvert\lvert \nabla u\rvert \rvert^{2}_{L^2}\right].
$$
\end{proposition}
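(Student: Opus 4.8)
The plan is to establish the four assertions in sequence, with the existence of a minimizer being the analytic core and the remaining claims flowing from the Euler--Lagrange structure.

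First I would record the symmetries of $F$: a direct computation shows that $F$ is invariant under the amplitude scaling $u\mapsto \mu u$, the dilation $u\mapsto u(\lambda\,\cdot)$ and translations, since each leaves numerator and denominator homogeneous of the same degree. Given a minimizing sequence $(u_n)$, I would use the two scalings to normalize (for instance $\|u_n\|_{L^2}=\|\nabla u_n\|_{L^2}=1$) and replace each $u_n$ by its Schwartz symmetric decreasing rearrangement $u_n^{*}$; rearrangement preserves all $L^p$ norms and does not increase $\|\nabla u\|_{L^2}$, hence does not increase $F$, so we may assume the $u_n$ are nonnegative, radial and decreasing, and the sequence stays minimizing and bounded in $H^1$. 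Compactness is then extracted via the profile decomposition of \cite{HK2005}: the scale-invariant structure together with $F(u_n)\to d_0$ forces a single profile to carry the sequence while the remainder vanishes, yielding strong convergence up to symmetries to a minimizer $\phi$. The delicate point is the critical exponent $6$, for which the embedding $H^1(\mathbb{R}^3)\hookrightarrow L^6$ is noncompact: one must preclude a concentrating Sobolev bubble from absorbing the $L^6$ mass, which is exactly what the strict subadditivity inside the profile decomposition rules out. Vanishing is immediate to exclude, since if the $u_n$ spread out then $\|u_n\|_{L^4}\to 0$, forcing $F(u_n)\to\infty$. The nonnegative radial limit $\phi$ is positive by the strong maximum principle once the equation below is in hand.

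Next I would derive the Euler--Lagrange equation. Writing $F=N/D$ and imposing $\delta N/N=\delta D/D$ at $\phi$ gives, with $K=\|\nabla\phi\|_{L^2}^2$, $M=\|\phi\|_{L^2}^2$, $B=\|\phi\|_{L^6}^6$, $A=\|\phi\|_{L^4}^4$,
\[
-\frac{3}{2K}\Delta\phi+\frac{1}{M}\phi+\frac{3}{2B}\phi^{5}-\frac{4}{A}\phi^{3}=0 .
\]
Dividing by $3/(2K)$ puts this in the form $-\Delta\phi+\tfrac{2K}{3M}\phi+\tfrac{K}{B}\phi^{5}-\tfrac{8K}{3A}\phi^{3}=0$. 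I then rescale $\phi\mapsto a\phi(\lambda\,\cdot)$, choosing $(a,\lambda)$ to normalize the $\phi^{5}$ and $\phi^{3}$ coefficients to $1$ and $-1$; the equation becomes exactly \eqref{1.2} with $\omega=\omega_\phi:=2K/(3M)$. Since $\omega_\phi>0$ and \eqref{1.2} has a nontrivial solution only for $\omega\in(0,\tfrac{3}{16})$ by the Berestycki--Lions range recalled in the excerpt, we obtain $\omega_\phi\in(0,\tfrac{3}{16})$.

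Finally, the common mass, the zero energy and the two inequalities follow algebraically. The key observation is that a minimizer of $F$ which also solves \eqref{1.2} must obey the coefficient identities $K=B$ and $A=\tfrac{8}{3}K$: comparing the two equations and using the linear independence of $\phi,\phi^{3},\phi^{5}$ forces the matching. Substituting these into $F(\phi)=M^{1/2}B^{1/4}K^{3/4}/A=d_0$ collapses the quotient to $\tfrac{3}{8}M^{1/2}=d_0$, so $M=\tfrac{64}{9}d_0^2=:\rho$ for every such minimizer, irrespective of its shape or of $\omega_\phi$; and substituting into $E(\phi)=\tfrac12 K-\tfrac14 A+\tfrac16 B$ gives $E(\phi)=0$. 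For the inequalities, $F(u)\ge d_0$ rearranges at once to $\|u\|_{L^4}^4\le d_0^{-1}\|u\|_{L^2}\|u\|_{L^6}^{3/2}\|\nabla u\|_{L^2}^{3/2}$, and $d_0=\tfrac{3}{8}\rho^{1/2}$ turns the constant into $\tfrac{8}{3}\rho^{-1/2}$; the interpolation inequality then follows by applying the weighted arithmetic--geometric mean bound $a^{1/4}b^{3/4}\le\tfrac14 a+\tfrac34 b$ with $a=\|u\|_{L^6}^6$ and $b=\|\nabla u\|_{L^2}^2$ to the factor $\|u\|_{L^6}^{3/2}\|\nabla u\|_{L^2}^{3/2}$. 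The main obstacle throughout is the existence step, specifically ruling out concentration at the critical $L^6$ scale; everything after the Euler--Lagrange equation is bookkeeping.
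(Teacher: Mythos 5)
You should first note that the paper contains no proof of this proposition at all: it is stated in Section 2 under the blanket attribution ``According to Killip, Oh, Pocovnicu and Visan \cite{KOPV2017}, the following propositions are true.'' So the only meaningful comparison is with that reference, whose argument (a Weinstein-type minimization: symmetrization, compactness, Euler--Lagrange equation, two-parameter rescaling, algebraic identities) your sketch essentially reconstructs. Your Euler--Lagrange computation, the rescaling to (\ref{1.2}), the identities $K=B$ and $A=\frac{8}{3}K$ obtained from linear independence of $1,\phi^{2},\phi^{4}$ (valid since $\phi$ is nonconstant), and the passage from $F(u)\ge d_0$ to the two inequalities via $d_0=\frac{3}{8}\rho^{1/2}$ and weighted AM--GM are all correct. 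Two points need repair. First, a bookkeeping slip: after the rescaling $\phi\mapsto a\phi(\lambda\,\cdot)$ the frequency is $\omega_\phi=2K/(3M\lambda^{2})$, equivalently $2\widetilde K/(3\widetilde M)$ computed from the norms of the rescaled function, not $2K/(3M)$; this is harmless because only $\omega_\phi>0$ together with the Berestycki--Lions range is used. Second, because $F$ is invariant under $u\mapsto\mu u$ and $u\mapsto u(\lambda\,\cdot)$, the proposition cannot hold in its literal reading (if $\phi$ is a positive minimizer, so is $2\phi$, which has four times the mass and solves no equation of the form (\ref{1.2})); the correct reading, which is what \cite{KOPV2017} proves and what your final paragraph actually establishes, is that every minimizer solves (\ref{1.2}) \emph{after} the rescaling, and every minimizer normalized to solve (\ref{1.2}) has $M=\rho$ and $E=0$. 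You should state this normalization explicitly rather than leave it implicit.

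On the existence step, your diagnosis of where the difficulty lies is off, though not fatally. Since $\|u\|_{L^6}$ sits in the \emph{numerator} of $F$, loss of $L^6$ mass to a concentrating Sobolev bubble is no threat to a minimization problem: weak lower semicontinuity of the numerator is all that is needed, and a nontrivial bubble would make the weak limit strictly better than $d_0$, which is impossible. Indeed, once you have rearranged and normalized, the profile decomposition of \cite{HK2005} is unnecessary: a radial nonincreasing minimizing sequence is compact in $L^4(\mathbb{R}^3)$ by Strauss's lemma \cite{S1977}; the $L^4$ norms stay bounded below, because if $\|u_n\|_{L^6}\to0$ then the interpolation $\|u\|_{L^4}^4\le\|u\|_{L^2}\|u\|_{L^6}^{3}$ forces $F(u_n)\ge\|u_n\|_{L^6}^{-3/2}\to\infty$ (note this interpolation is also what your ``vanishing is immediate to exclude'' step tacitly uses); hence the weak limit $\phi$ is nonzero, and lower semicontinuity of the three numerator norms gives $F(\phi)\le d_0$ outright. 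Your strict-subadditivity claim for the profile decomposition does happen to be true --- under splitting into distant profiles the numerator scales like the $3/2$ power of the decoupled quantities while the denominator scales like the first power, so any genuine splitting is strictly suboptimal --- but you assert it without this computation, and it is the more roundabout route to the same conclusion.
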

\begin{proposition}\label{p2.5}
For the variational problem (\ref{1.8}), one has that:\\
(I) If $0<m<\rho$, then
$$E(u)>0 \ and \ d_m=0.$$
In this case, $d_m$ is not achieved.\\
(II) If $m=\rho$, then
$$E(u)\geq0 \ and \ d_m=0.$$
In this case, $d_m$ is achieved at some positive minimizer $\psi_\rho$. In addition, every positive minimizer $\psi_\rho$ of (\ref{1.8}) satisfies (\ref{1.2}) with some $\omega=\omega_{\psi_\rho}\in(0,\frac{3}{16})$.\\
(III) If $m>\rho$, then
$$E(u)\geq d_m \ and \ d_m<0.$$
In this case, $d_m$ is achieved at some positive minimizer $\psi_m$. In addition, every positive minimizer $\psi_m$ of (\ref{1.8}) satisfies (\ref{1.2}) with some $\omega=\omega_{\psi_m}\in(0,\frac{3}{16})$.
\end{proposition}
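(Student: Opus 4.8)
The plan is to run everything off the interpolation inequality recorded in Proposition~\ref{p2.3}. First I would extract its consequence on the mass constraint: if $M(u)=m$ then $\|u\|_{L^2}=m^{1/2}$, so that inequality reads $\frac14\|u\|_{L^4(\mathbb{R}^3)}^4\le\frac23(m/\rho)^{1/2}\big[\frac14\|u\|_{L^6(\mathbb{R}^3)}^6+\frac34\|\nabla u\|_{L^2}^2\big]$, and feeding it into $E$ gives the master bound
\begin{equation*}
E(u)\ \ge\ \Big(1-\sqrt{m/\rho}\,\Big)\Big[\tfrac12\|\nabla u\|_{L^2}^2+\tfrac16\|u\|_{L^6(\mathbb{R}^3)}^6\Big].
\end{equation*}
This single inequality decides the sign of the energy in all three regimes. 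Separately, the plain interpolation $\|u\|_{L^4(\mathbb{R}^3)}^4\le m^{1/2}\|u\|_{L^6(\mathbb{R}^3)}^3$ followed by Young's inequality yields $E(u)\ge\frac12\|\nabla u\|_{L^2}^2-\frac{3m}{32}$, so $d_m>-\infty$ for every $m>0$.

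For part (I) with $m<\rho$ the factor $1-\sqrt{m/\rho}$ is positive, so the master bound gives $E(u)>0$ on the constraint, hence $d_m\ge0$. To show $d_m=0$ I would spread any fixed $u_0$ of mass $m$ by the mass-preserving dilation $u_\lambda(x)=\lambda^{3/2}u_0(\lambda x)$: then $\|\nabla u_\lambda\|_{L^2}^2=\lambda^2\|\nabla u_0\|_{L^2}^2$, $\|u_\lambda\|_{L^4}^4=\lambda^3\|u_0\|_{L^4}^4$, $\|u_\lambda\|_{L^6}^6=\lambda^6\|u_0\|_{L^6}^6$, so $E(u_\lambda)\to0$ as $\lambda\to0^+$, and $E>0$ forbids attainment. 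Part (II) at $m=\rho$ is the same computation with the master bound degenerating to $E\ge0$ and $d_\rho=0$; but here attainment is handed to us by Proposition~\ref{p2.3}, whose positive minimizer $\phi$ has $M(\phi)=\rho$ and $E(\phi)=0$, so $\psi_\rho=\phi$ is a minimizer and solves (\ref{1.2}) with $\omega_\phi\in(0,\frac{3}{16})$.

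For part (III) with $m>\rho$ the first task is $d_m<0$. Writing $A=\|\nabla\phi\|_{L^2}^2$, $B=\|\phi\|_{L^4(\mathbb{R}^3)}^4$, $C=\|\phi\|_{L^6(\mathbb{R}^3)}^6$, the relation $E(\phi)=0$ from Proposition~\ref{p2.3} together with the Pohozaev identity $I(\phi)=0$ (the functional (\ref{1.30}) vanishes on solutions of (\ref{1.2})) combine to give $A=C$ and $B=\frac83A$. I would then test $d_m$ with $u(x)=c\,\phi(\mu x)$ where $\mu=\rho/m$ and $c^2=(\rho/m)^2$; a direct check shows $M(u)=m$, and substituting $A=C,\ B=\frac83A$ yields $E(u)=\frac{A}{6}\frac{\rho}{m}\big[(\rho/m)^2-1\big]<0$ since $m>\rho$. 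With $d_m>-\infty$ from the first paragraph, $d_m$ is finite and strictly negative.

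The main obstacle is attainment in part (III), which I would settle by a concentration--compactness / profile decomposition argument applied to a minimizing sequence $(u_n)$; its $H^1$-boundedness follows from $E(u_n)\ge\frac12\|\nabla u_n\|_{L^2}^2-\frac{3m}{32}$. The strict negativity $d_m<0$ excludes the vanishing profile (which would push the energy up to $0$), while excluding dichotomy needs the strict binding inequality $d_m<d_\alpha+d_{m-\alpha}$ for $0<\alpha<m$ --- the genuinely delicate point, to be obtained from the scaling structure together with $d_\alpha\ge0$ and $d_m<0$ --- after which the surviving profile is a minimizer up to translation. A Schwarz symmetric rearrangement then makes it positive and radial, and the mass-constrained Euler--Lagrange equation is precisely (\ref{1.2}) with the Lagrange multiplier in the role of $\omega$; since $\psi_m\not\equiv0$, the Berestycki--Lions existence range (\ref{1.3}) forces $\omega_{\psi_m}\in(0,\frac{3}{16})$. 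The structural reason all of this works is that the defocusing energy-critical sextic term supplies the coercivity and compactness that tame the focusing cubic term and prevent concentration at a point.
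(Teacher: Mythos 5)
First, a point of reference: the paper does not prove Proposition~\ref{p2.5} at all --- it is imported verbatim from Killip--Oh--Pocovnicu--Visan \cite{KOPV2017} (see the sentence opening Section~\ref{sec2}), so your attempt can only be compared with the literature, not with an internal argument. On that footing, most of what you wrote is correct. The master bound $E(u)\ge\bigl(1-\sqrt{m/\rho}\bigr)\bigl[\tfrac12\|\nabla u\|_{L^2}^2+\tfrac16\|u\|_{L^6(\mathbb{R}^3)}^6\bigr]$ is a correct consequence of the interpolation inequality in Proposition~\ref{p2.3}; the mass-preserving dilation $u_\lambda=\lambda^{3/2}u_0(\lambda\,\cdot)$ forces $d_m\le0$; so parts (I) and (II) are essentially complete. (One presentational slip in (II): the statement requires that \emph{every} positive minimizer solve (\ref{1.2}), while you only exhibit the single minimizer $\phi$; the Lagrange-multiplier plus Berestycki--Lions argument you invoke in (III) is what covers all minimizers, in (II) as well.) Your proof that $d_m<0$ for $m>\rho$ --- extracting $A=C$ and $B=\tfrac83A$ from $E(\phi)=I(\phi)=0$ and testing with $c\,\phi(\mu x)$ --- is correct, and the coercivity bound $E(u)\ge\tfrac12\|\nabla u\|_{L^2}^2-\tfrac{3m}{32}$ is exactly right.

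The genuine gap is the attainment claim in part (III). You reduce it to the strict binding inequality $d_m<d_\alpha+d_{m-\alpha}$ for all $0<\alpha<m$, and you propose to get this ``from the scaling structure together with $d_\alpha\ge0$ and $d_m<0$.'' That premise is false: by your own part-(III) computation, $d_\alpha<0$ as soon as $\alpha>\rho$, so for $m>2\rho$ there are splittings in which \emph{both} pieces carry strictly negative infimum and neither of your stated ingredients is available; and even when $\alpha\le\rho$, so that $d_\alpha=0$, binding amounts to the strict monotonicity $d_m<d_{m-\alpha}$, which you never establish. So the one step you flag as delicate is precisely the step that is missing, and the route you sketch for it would not work as stated. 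Two ways to close it: (a) prove strict sub-homogeneity of $m\mapsto d_m$ on the region where it is negative, via the dilation $v\mapsto v(\beta^{-1/3}\,\cdot)$, $\beta>1$, which multiplies the mass by $\beta$ and satisfies
\begin{equation*}
E\bigl(v(\beta^{-1/3}\,\cdot)\bigr)=\beta E(v)-\tfrac12\|\nabla v\|_{L^2}^2\bigl(\beta-\beta^{1/3}\bigr)<\beta E(v),
\end{equation*}
and then run the standard Lions bookkeeping; or (b) bypass concentration--compactness altogether: Schwarz-symmetrize the minimizing sequence (this preserves the $L^p$ norms and does not increase $E$), use Strauss radial compactness to get strong $L^4$ convergence of a subsequence, use $d_m<0$ to conclude the weak limit $v$ is nontrivial with $E(v)\le d_m$, and use the dilation in (a) to rule out mass loss $M(v)<m$. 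Route (b) is exactly the pattern this paper itself uses in Case~2 of the proof of Theorem~\ref{t3.3} for the related cross-constrained problem, so it is also the repair most consistent with the paper's toolkit.
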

\begin{proposition}\label{p2.2}
(\ref{1.2}) possesses a positive solution $Q_\omega$ if and only if $\omega\in(0,\frac{3}{16})$. In addition, $Q_\omega$  holds the following properties.\\
(I) $Q_\omega$ is radially symmetric and unique up to translations.\\
(II) $Q_\omega(x)$ is a real-analytic funtion of $x$ and that for some $c=c(\omega)>0$ as $\lvert x\rvert\rightarrow \infty$,
\begin{equation*}
\lvert x\rvert \mathrm{exp}\{\sqrt{\omega}\lvert x\rvert\}Q_\omega(x)\rightarrow c,\quad
\mathrm{exp}\{\sqrt{\omega}\lvert x\rvert\}x \nabla Q_\omega(x)\rightarrow -\sqrt{\omega}c.
\end{equation*}
(III) Pohozaev identity
\begin{equation*}
\int \frac{1}{3}\lvert \nabla Q_\omega \rvert^2 + \frac{1}{3}Q_\omega^6-\frac{1}{4}Q_\omega^4dx=0.
\end{equation*}
\end{proposition}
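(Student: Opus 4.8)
The plan is to assemble the statement from four classical ingredients, verifying in each case that the cubic--quintic nonlinearity meets the required structural hypotheses. Writing positive solutions of (\ref{1.2}) as solutions of $-\Delta u=g(u)$ with $g(s)=s^{3}-s^{5}-\omega s$ and primitive $G(s)=\frac14 s^{4}-\frac16 s^{6}-\frac{\omega}{2}s^{2}$, I would first settle existence via the Berestycki--Lions theorem \cite{BL1983}. Its sufficient conditions here reduce to $g'(0)=-\omega<0$ (so the origin is a stable rest point and finite-energy solutions decay) together with the requirement that $G(s_{0})>0$ for some $s_{0}>0$; note the energy-critical quintic term enters with the favourable defocusing sign, so $\limsup_{s\to\infty}g(s)/s^{5}=-1\le 0$ causes no obstruction. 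Writing $G(s)=\frac{s^{2}}{12}(3s^{2}-2s^{4}-6\omega)$ and setting $t=s^{2}$, the bracket $-2t^{2}+3t-6\omega$ attains its maximum $\frac{9}{8}-6\omega$ at $t=\frac34$, which is positive exactly when $\omega<\frac{3}{16}$. Hence a positive ground state exists for every $\omega\in(0,\frac{3}{16})$.

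For the converse I would argue directly from the conservation identities. Multiplying (\ref{1.2}) by $u$ and integrating gives the Nehari identity $\int|\nabla u|^{2}-\int u^{4}+\int u^{6}+\omega\int u^{2}=0$, while the Pohozaev identity reads $\frac12\int|\nabla u|^{2}=3\int G(u)$. When $\omega\ge\frac{3}{16}$ the computation above gives $G(s)\le 0$ for all $s$, so the Pohozaev right-hand side is nonpositive while its left-hand side is nonnegative, forcing $\nabla u\equiv 0$ and hence $u\equiv 0$ in $H^{1}$. Eliminating $\int|\nabla u|^{2}$ between the two identities yields $\omega\int u^{2}=\frac14\int u^{4}$, so $\omega>0$ is necessary as well, which completes the \emph{if and only if}. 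Substituting this relation back into the Nehari identity simultaneously produces part (III): $\int(\frac13|\nabla Q_{\omega}|^{2}-\frac14 Q_{\omega}^{4}+\frac13 Q_{\omega}^{6})\,dx=0$, that is, $I(Q_{\omega})=0$ for the functional (\ref{1.30}).

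Next I would establish the qualitative properties of (I) and (II). A standard elliptic a priori bound gives $Q_{\omega}(x)\to 0$ as $|x|\to\infty$, so the moving-plane method of Gidas--Ni--Nirenberg \cite{GNN1979} applies to $-\Delta u=g(u)$ and shows every positive solution is, after a translation, radially symmetric and strictly decreasing. Uniqueness then reduces to the radial ODE $u''+\frac{2}{r}u'+g(u)=0$, $u'(0)=0$, $u(\infty)=0$, $u>0$, to which I would apply the Serrin--Tang uniqueness theorem \cite{ST2000}; together with radial symmetry this gives uniqueness up to translations. Regularity and decay follow by bootstrapping: $Q_{\omega}\in H^{1}\hookrightarrow L^{6}$ feeds the polynomial right-hand side into elliptic regularity to give $Q_{\omega}\in C^{\infty}$, and since $g$ is a polynomial, Morrey's analyticity theorem upgrades this to real-analyticity. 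For the decay constants I would compare the radial equation for large $r$ with $u''+\frac{2}{r}u'-\omega u=0$, whose decaying solution is $\sim e^{-\sqrt{\omega}r}/r$; an asymptotic-ODE argument pins down the limit $|x|e^{\sqrt{\omega}|x|}Q_{\omega}(x)\to c>0$ and, upon differentiating, the companion statement for $\nabla Q_{\omega}$.

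I expect the genuine difficulty to be the uniqueness step, namely verifying that the cubic--quintic nonlinearity satisfies the precise monotonicity and sign conditions of Serrin--Tang \cite{ST2000} (equivalently, of Kwong--McLeod type results) for $g(s)/s=s^{2}-s^{4}-\omega$, which changes sign once on $(0,\infty)$; uniqueness of the ground state is known to be sensitive to the shape of $g(s)/s$ near its zero, so this is where care is needed. By contrast, the sharp threshold $\frac{3}{16}$ and the Pohozaev identity (III) are elementary once the two integral identities are in hand, and the decay asymptotics, though technical, follow from classical comparison arguments for $-\Delta+\omega$ on $\mathbb{R}^{3}$.
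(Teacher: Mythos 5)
Your proposal is correct and follows essentially the same route as the paper, which offers no independent proof but simply cites Killip--Oh--Pocovnicu--Visan \cite{KOPV2017} together with the classical ingredients you invoke: Berestycki--Lions \cite{BL1983} for existence with the threshold $\omega<\frac{3}{16}$, Gidas--Ni--Nirenberg \cite{GNN1979} for radial symmetry, and Serrin--Tang \cite{ST2000} for uniqueness. Your computations fleshing out the citation are sound --- the maximization of $G$ giving the sharp constant $\frac{3}{16}$, the nonexistence argument for $\omega\ge\frac{3}{16}$ via the sign of $G$, the relation $\omega\int u^{2}=\frac14\int u^{4}$ forcing $\omega>0$, and the derivation of (III) by combining the Nehari and Pohozaev identities all check out, and you correctly flag the verification of the Serrin--Tang hypotheses as the one step left as a black box (exactly as the paper leaves it).
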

\begin{proposition}\label{p2.2a}
Let $\omega\in(0,\frac{3}{16})$ and $Q_\omega$ be the ground state of (\ref{1.2}). One has\\
(I) put $M(Q_\omega)=\int Q_\omega^2dx$, then
\begin{equation*}
M(Q_\omega)\rightarrow\infty \ as \ \omega\rightarrow0,\\
\end{equation*}
\begin{equation*}
M(Q_\omega)\rightarrow\infty \ and \ E(Q_\omega)\rightarrow-\infty, \ as \ \omega\rightarrow\frac{3}{16};
\end{equation*}
(II) the map $\omega\rightarrow M(Q_\omega)$ is $C^1$, indeed, $M(Q_\omega)$ is real analytic;\\
(III) put $\beta(\omega)=\int Q_\omega ^6dx$ / $\int \lvert \nabla Q_\omega \rvert^2dx$, then
\begin{equation*}
\begin{split}
&\frac{d}{d\omega}E(Q_\omega)=-\frac{\omega}{2}\frac{d}{d\omega}M(Q_\omega),\quad
\frac{d}{d\omega}\int \lvert \nabla Q_\omega \rvert^2dx=\frac{3}{2}M(Q_\omega),\\
&\int Q_\omega ^4dx=4\omega\int Q_\omega ^2dx, \qquad E(Q_\omega)=\frac{1-\beta(\omega)}{6}\int\lvert \nabla Q_\omega\rvert ^2dx,\\
&\int Q_\omega ^2dx=\frac{\beta(\omega)+1}{3\omega}\int \lvert \nabla Q_\omega \rvert^2dx,\quad
\int Q_\omega ^6dx=\beta(\omega)\int \lvert \nabla Q_\omega \rvert^2dx.
\end{split}
\end{equation*}
\end{proposition}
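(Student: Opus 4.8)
The plan is to establish the assertions in the order (III)-algebraic, (II), (III)-derivative, (I), since each step feeds the next. For the algebraic identities in part (III), I would first test equation (\ref{1.2}) against $Q_\omega$ and integrate to get the Nehari relation $\int\lvert\nabla Q_\omega\rvert^2-\int Q_\omega^4+\int Q_\omega^6+\omega\int Q_\omega^2=0$. A suitable linear combination of this with the Pohozaev identity of Proposition \ref{p2.2}(III) cancels the kinetic and sextic terms and leaves $\int Q_\omega^4=4\omega\int Q_\omega^2$. Substituting back yields $\int\lvert\nabla Q_\omega\rvert^2+\int Q_\omega^6=3\omega\int Q_\omega^2$; writing $\beta(\omega)=\int Q_\omega^6/\int\lvert\nabla Q_\omega\rvert^2$ then gives at once the formulas $\int Q_\omega^2=\frac{\beta(\omega)+1}{3\omega}\int\lvert\nabla Q_\omega\rvert^2$ and $\int Q_\omega^6=\beta(\omega)\int\lvert\nabla Q_\omega\rvert^2$, while inserting $\int Q_\omega^4=4\omega\int Q_\omega^2$ into the definition of $E$ produces $E(Q_\omega)=\frac{1-\beta(\omega)}{6}\int\lvert\nabla Q_\omega\rvert^2$.

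For the regularity in (II) I would realize $Q_\omega$ as the zero of $G(u,\omega):=-\Delta u-u^3+u^5+\omega u$ and apply the implicit function theorem on the space of radial $H^1(\mathbb{R}^3)$ functions. The linearization is $\mathcal{L}_\omega=-\Delta-3Q_\omega^2+5Q_\omega^4+\omega$, whose kernel is spanned by the translations $\partial_{x_j}Q_\omega$; these are odd and hence vanish in the radial class, so $\mathcal{L}_\omega$ is an isomorphism on the radial sector (this nondegeneracy follows from Sturm oscillation for the radial ODE, as in \cite{KOPV2017}). Since the nonlinearity is polynomial, the analytic implicit function theorem gives real-analytic dependence $\omega\mapsto Q_\omega$, so $\omega\mapsto M(Q_\omega)$ is real analytic and in particular $C^1$.

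Granting (II), the derivative identities follow quickly. Writing (\ref{1.2}) as $E'(Q_\omega)+\frac{\omega}{2}M'(Q_\omega)=0$ and differentiating by the chain rule gives $\frac{d}{d\omega}E(Q_\omega)=\langle E'(Q_\omega),\partial_\omega Q_\omega\rangle=-\frac{\omega}{2}\langle M'(Q_\omega),\partial_\omega Q_\omega\rangle=-\frac{\omega}{2}\frac{d}{d\omega}M(Q_\omega)$. For the kinetic derivative I would combine the algebraic identities into $\int\lvert\nabla Q_\omega\rvert^2=3E(Q_\omega)+\frac{3\omega}{2}M(Q_\omega)$; differentiating this and substituting the identity just proved cancels the two $\frac{d}{d\omega}M(Q_\omega)$ contributions and leaves $\frac{d}{d\omega}\int\lvert\nabla Q_\omega\rvert^2=\frac{3}{2}M(Q_\omega)$.

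The \emph{main obstacle} is the endpoint asymptotics in (I). The relevant structure is the radial ODE $Q_\omega''+\frac{2}{r}Q_\omega'+Q_\omega^3-Q_\omega^5-\omega Q_\omega=0$ together with the Berestycki–Lions potential $F(s)=\frac{\omega}{2}s^2-\frac{1}{4}s^4+\frac{1}{6}s^6$: the quantity $F(s)/s^2$ has minimum $\frac{\omega}{2}-\frac{3}{32}$ at $s^2=\frac{3}{4}$, which is negative precisely when $\omega<\frac{3}{16}$, so at $\omega=\frac{3}{16}$ the well degenerates. As $\omega\to\frac{3}{16}^-$ the dip near $s=\sqrt{3/4}$ becomes shallow, the profile develops a wide plateau there, and both the mass (essentially the volume of the plateau) and the negative potential energy diverge, giving $M(Q_\omega)\to\infty$ and $E(Q_\omega)\to-\infty$. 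As $\omega\to 0^+$ the decay rate $\sqrt{\omega}$ of Proposition \ref{p2.2}(II) degenerates, the tail lengthens, and $M(Q_\omega)\to\infty$ again. I would make these rigorous by a phase-plane and shooting analysis of the radial ODE, the delicate point being quantitative control of the plateau's transition region, following \cite{KOPV2017}; the sharp blow-up rates of $M(Q_\omega)$ and $\frac{d}{d\omega}M(Q_\omega)$ at both endpoints were subsequently obtained by Lewin and Rota Nodari \cite{LR2020}.
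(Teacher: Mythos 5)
This proposition is one the paper does not prove at all: it appears in Section~\ref{sec2} under the blanket attribution ``According to Killip, Oh, Pocovnicu and Visan \cite{KOPV2017}, the following propositions are true,'' so there is no internal argument to compare against, and your proposal should be judged against the proofs in that reference. On that basis your reconstruction is essentially sound and follows the same architecture as the source. The part (III) algebra is complete and correct: pairing (\ref{1.2}) with $Q_\omega$ gives the Nehari relation, and subtracting three times the Pohozaev identity of Proposition \ref{p2.2}(III) does cancel the kinetic and sextic terms, leaving $\int Q_\omega^4\,dx=4\omega\int Q_\omega^2\,dx$; the remaining identities and the two derivative formulas then follow exactly as you compute (your identity $\int\lvert\nabla Q_\omega\rvert^2dx=3E(Q_\omega)+\tfrac{3\omega}{2}M(Q_\omega)$ checks out, and your ordering --- establishing (II) before differentiating --- is the right one, since the chain-rule step needs a differentiable branch $\omega\mapsto Q_\omega$). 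For (II) and (I), however, your write-up is an outline rather than a proof: the radial nondegeneracy of $-\Delta+\omega-3Q_\omega^2+5Q_\omega^4$ (that its kernel in the radial sector is trivial) is precisely the hard input behind the analytic implicit function theorem, and the endpoint asymptotics --- the flat-top/plateau degeneration at $s^2=\tfrac34$ as $\omega\to\tfrac{3}{16}$, and the cubic-NLS rescaling limit $Q_\omega\approx\sqrt{\omega}\,W(\sqrt{\omega}\,x)$ giving $M(Q_\omega)\sim\omega^{-1/2}$ as $\omega\to0$ --- require the quantitative analysis you defer to \cite{KOPV2017} (with sharp rates later in \cite{LR2020}). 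Since the paper itself imports the entire proposition from the same reference, these deferrals put you on equal footing with the paper; just be aware that as written, parts (I) and (II) are citations dressed as sketches, not self-contained arguments.
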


\begin{proposition}\label{p2.1}
For arbitrary $\varphi_0\in H^1(\mathbb{R}^3)$, (\ref{1.1}) possesses a unique global solution $\varphi\in C(\mathbb{R}; H^1(\mathbb{R}^3))$ such that $\varphi(0,x)=\varphi_0$. In addition, the solution holds the conservation of mass, energy and momentum, where mass is given by (\ref{1.29}), energy is given by (\ref{1.7}) and momentum is given by $P(u):=\int2 \mathrm{Im}(\bar{u}\nabla u)dx$ for $u\in H^1(\mathbb{R}^3)$.
\end{proposition}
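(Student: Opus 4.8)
The plan is to produce the solution locally by a Strichartz contraction, to control its $H^1$-norm a priori via the conservation laws, and then to upgrade the local solution to a global one using the global theory for the defocusing energy-critical equation. Write the nonlinearity in (\ref{1.1}) as $N(\varphi)=-\lvert\varphi\rvert^2\varphi+\lvert\varphi\rvert^4\varphi$: on $\mathbb{R}^3$ the quintic term $\lvert\varphi\rvert^4\varphi$ is energy-critical (as $\dot H^1\hookrightarrow L^6$), while the cubic term $\lvert\varphi\rvert^2\varphi$ is energy-subcritical. First I would establish local well-posedness in $H^1$: in a standard energy-critical Strichartz space controlling $\|\varphi\|_{L^{10}_{t,x}}$ and $\|\nabla\varphi\|_{L^{10}_tL^{30/13}_x}$, the Duhamel map $\Phi(\varphi)=e^{it\Delta}\varphi_0-i\int_0^t e^{i(t-s)\Delta}N(\varphi)\,ds$ is a contraction on a small ball over a short time interval, since $u\mapsto\lvert u\rvert^4u$ is locally Lipschitz in these norms and the subcritical term $u\mapsto\lvert u\rvert^2u$ is controlled by the $H^1$-norm over short times. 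This yields a unique maximal solution $\varphi\in C((-T_-,T_+);H^1)$, with the blow-up alternative that $T_\pm<\infty$ forces $\|\varphi\|_{L^{10}_{t,x}}\to\infty$ as $t\to T_\pm$; uniqueness is part of the contraction.

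Next I would record the conservation laws. For regularized data (Schwartz, or $H^2$) the computations are justified by direct differentiation in $t$ using (\ref{1.1}): pairing the equation with $\bar\varphi$ and taking imaginary parts gives $\frac{d}{dt}M(\varphi)=0$; pairing with $\partial_t\bar\varphi$ and taking real parts gives $\frac{d}{dt}E(\varphi)=0$; and pairing with $\nabla\bar\varphi$ (equivalently using translation invariance) gives $\frac{d}{dt}P(\varphi)=0$, with $M,E$ as in (\ref{1.29}) and (\ref{1.7}) and $P$ the stated momentum. These identities then pass to arbitrary $\varphi_0\in H^1$ by approximating the data in $H^1$ and invoking the continuous dependence of the flow.

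The a priori bound is the arithmetic core. Since $M_0=M(\varphi_0)$ and $E_0=E(\varphi_0)$ are conserved, (\ref{1.7}) rearranges to $\tfrac12\|\nabla\varphi(t)\|_{L^2}^2+\tfrac16\|\varphi(t)\|_{L^6}^6=E_0+\tfrac14\|\varphi(t)\|_{L^4}^4$. By the interpolation inequality $\|\varphi\|_{L^4}^4\le\|\varphi\|_{L^2}\|\varphi\|_{L^6}^3=M_0^{1/2}\|\varphi\|_{L^6}^3$ and then Young's inequality, $\tfrac14\|\varphi\|_{L^4}^4\le\tfrac{1}{12}\|\varphi\|_{L^6}^6+C(M_0)$, so the sextic term is absorbed by the defocusing (good-sign) quintic contribution, giving $\|\nabla\varphi(t)\|_{L^2}^2\le 2E_0+C(M_0)$ uniformly in $t$. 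Together with $\|\varphi(t)\|_{L^2}^2=M_0$ this produces a time-independent bound $\sup_{t}\|\varphi(t)\|_{H^1}\le C(E_0,M_0)$. Notice the focusing sign of the cubic term is harmless, since the interpolation controls $\|\varphi\|_{L^4}^4$ regardless of sign.

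The main obstacle is the global continuation, and it is the genuine energy-critical difficulty: the uniform $H^1$-bound excludes norm inflation but does \emph{not} by itself bound the critical spacetime norm $\|\varphi\|_{L^{10}_{t,x}}$ on a finite interval, because the local existence time above depends on the profile of $\varphi(t)$ rather than merely on $\|\varphi(t)\|_{H^1}$. To close the argument I would invoke the global well-posedness and finite spacetime bounds for the defocusing energy-critical NLS and treat the focusing cubic term as an energy-subcritical perturbation through the long-time perturbation (stability) theory for the energy-critical equation: on any compact interval the a priori $H^1$-bound feeds the stability lemma to yield a finite bound on $\|\varphi\|_{L^{10}_{t,x}}$, which via the blow-up alternative forbids finite-time breakdown and hence delivers the global solution $\varphi\in C(\mathbb{R};H^1)$. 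This deep energy-critical input --- not the elementary energy bound --- is where the real work resides.
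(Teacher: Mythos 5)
Your proposal is correct and coincides with the proof the paper relies on: the paper itself offers no argument for Proposition \ref{p2.1}, citing it from Killip--Oh--Pocovnicu--Visan \cite{KOPV2017} (and \cite{Z2006} where the proposition is invoked), and the proof in those references proceeds exactly along your lines --- local well-posedness by Strichartz contraction in energy-critical norms, the three conservation laws justified by regularization, the uniform $H^1$ bound obtained by absorbing $\tfrac14\|\varphi\|_{L^4}^4$ into the defocusing sextic term via $\|\varphi\|_{L^4}^4\le M_0^{1/2}\|\varphi\|_{L^6}^3$ and Young's inequality, and global continuation through the long-time perturbation theory built on the Colliander--Keel--Staffilani--Takaoka--Tao spacetime bounds for the defocusing quintic equation \cite{CKSTT2008}. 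In particular, you correctly locate the genuine difficulty in the energy-critical continuation step rather than in the elementary energy bound, which is precisely the content of \cite{Z2006}.
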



\begin{proposition}\label{p2.16}

For the variational problem (\ref{1.31}), one has that:\\
(I) $d_{m}^{I}=\infty$ when $0<m<\frac{4}{3\sqrt{3}}\rho$.\\
(II) $0<d_{m}^{I}<\infty$ when $\frac{4}{3\sqrt{3}}\rho\leq m<\rho$.\\
(III) $d_{m}^{I}=d_{m}$ when $\rho\leq m<\infty$.\\
(IV) For $m\geq \frac{4}{3\sqrt{3}}\rho$, the infimum $d_{m}^{I}$ is achieved and is both strictly decreasing and lower semicontinuous as a function of $m$. In addition, $d_{m}^{I}$ is achieved at some positive minimizer $\psi$ when $m\geq \frac{4}{3\sqrt{3}}\rho$.
\end{proposition}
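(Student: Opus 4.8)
The plan is to exploit two elementary algebraic identities valid on the Pohozaev constraint $I(u)=0$. Writing $A=\|\nabla u\|_{L^2}^2$, $B=\|u\|_{L^6(\mathbb{R}^3)}^6$ and $m=\|u\|_{L^2}^2$, a direct comparison of the integrands of $E$ and $I$ gives, whenever $I(u)=0$,
\[
\|u\|_{L^4(\mathbb{R}^3)}^4=\tfrac{4}{3}(A+B),\qquad E(u)=\tfrac{1}{6}(A-B),
\]
the second identity because $E(u)-I(u)=\tfrac16\int(|\nabla u|^2-|u|^6)\,dx$ and $I(u)=0$. Essentially everything in the proposition is read off from these two relations together with the Gagliardo--Nirenberg--H\"older and interpolation inequalities of Proposition \ref{p2.3}.

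For part (I) I substitute the first identity into the Gagliardo--Nirenberg--H\"older inequality $\|u\|_{L^4}^4\le \tfrac{8}{3}\rho^{-1/2}m^{1/2}A^{3/4}B^{1/4}$, which yields $A+B\le 2\rho^{-1/2}m^{1/2}A^{3/4}B^{1/4}$ for every admissible $u$. Minimizing the scale-invariant ratio $(A+B)/(A^{3/4}B^{1/4})$ over $B/A>0$ (the minimum $4\cdot 3^{-3/4}$ is attained at $B/A=1/3$) forces $m\ge \tfrac{4}{3\sqrt3}\rho$; hence for $0<m<\tfrac{4}{3\sqrt3}\rho$ the admissible set is empty and $d_m^I=\infty$. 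For part (III) I note that the extra constraint only shrinks the admissible set, so $d_m^I\ge d_m$; conversely, by Proposition \ref{p2.5} the minimizer $\psi_m$ of $d_m$ solves (\ref{1.2}) and therefore satisfies the Pohozaev identity $I(\psi_m)=0$ of Proposition \ref{p2.2}, so $\psi_m$ is admissible for $d_m^I$ and $d_m^I\le E(\psi_m)=d_m$. Thus $d_m^I=d_m$ for $m\ge\rho$.

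For part (II) with $\tfrac{4}{3\sqrt3}\rho\le m<\rho$, the finiteness $d_m^I<\infty$ amounts to nonemptiness of the admissible set, which I obtain by a two-parameter scaling $u_{a,\lambda}(x)=a\phi(x/\lambda)$ applied to the minimizer $\phi$ of (\ref{1.5}) and solving $M(u_{a,\lambda})=m$, $I(u_{a,\lambda})=0$ by continuity. For strict positivity I feed the first identity into the interpolation inequality of Proposition \ref{p2.3}, obtaining $A+B\le \tfrac{1}{2}\theta(3A+B)$ with $\theta=(m/\rho)^{1/2}<1$; since $\theta>\tfrac23$ on this range, this forces $B\le A/r$ with $r=\tfrac{2-\theta}{3\theta-2}>1$, so in particular $E(u)=\tfrac16(A-B)>0$ pointwise. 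A Gagliardo--Nirenberg bound $\|u\|_{L^4}^4\le Cm^{1/2}A^{3/2}$ combined with $\|u\|_{L^4}^4=\tfrac43(A+B)\ge\tfrac43A$ bounds $A$ below by a constant $\underline A=\underline A(m)>0$, whence $d_m^I\ge \tfrac16(1-1/r)\,\underline A>0$.

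The main work, and the step I expect to be the principal obstacle, is attainment in part (IV) for $\tfrac{4}{3\sqrt3}\rho\le m<\rho$, where scaling and possible escape of mass to spatial infinity destroy compactness. For $m\ge\rho$ attainment is immediate from part (III) and Proposition \ref{p2.5}. In the remaining window I would take a minimizing sequence $u_n$; the estimates above bound $A_n$ both above and below and force $B_n\le A_n/r$, so $u_n$ is bounded in $H^1(\mathbb{R}^3)$, after which I apply the profile decomposition of \cite{HK2005} and Schwarz symmetrization \cite{LL2000}. The crux is to preclude splitting of mass among several profiles, and for this a strict subadditivity, equivalently a strict monotonicity, of $m\mapsto d_m^I$ must be established; I would derive it from mass-preserving dilations $u^\lambda=\lambda^{3/2}u(\lambda\,\cdot)$ composed with an amplitude factor tuned to restore $I=0$, producing for each $m'>m$ an admissible competitor of strictly smaller energy. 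This same family of scaling competitors simultaneously yields the strict monotonicity and lower semicontinuity asserted in (IV), while symmetric decreasing rearrangement produces a nonnegative radial minimizer $\psi$ that is strictly positive by the maximum principle applied to its Euler--Lagrange equation.
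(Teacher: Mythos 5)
The paper never proves Proposition \ref{p2.16}: it is stated in Section \ref{sec2} as one of the results imported from Killip--Oh--Pocovnicu--Visan \cite{KOPV2017}, so your self-contained attempt is by construction a different route, and most of it is sound. Parts (I), (II) and (III) are correct and essentially complete: the identities $\|u\|_{L^4}^4=\tfrac43(A+B)$ and $E(u)=\tfrac16(A-B)$ on $\{I=0\}$ check out, minimizing $(1+t)t^{-1/4}$ at $t=B/A=1/3$ gives exactly the threshold $\tfrac{4}{3\sqrt3}\rho$, the positivity argument via the interpolation inequality of Proposition \ref{p2.3} is valid (indeed $\theta\ge(4/(3\sqrt3))^{1/2}>2/3$), and $d_m^I=d_m$ for $m\ge\rho$ via the Pohozaev identity of Proposition \ref{p2.2} is exactly right. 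One item in (II) should be written out rather than waved through "by continuity": using $E(\phi)=0$ and $I(\phi)=0$, the constraint $I(a\phi(\cdot/\lambda))=0$ forces $\lambda^2a^2(2-a^2)=1$, whence $M=\rho\,[a(2-a^2)^{3/2}]^{-1}$, whose range over $a\in(0,\sqrt2)$ is precisely $[\tfrac{4}{3\sqrt3}\rho,\infty)$; this computation is what makes the threshold of (I) sharp and the admissible set nonempty on the whole claimed range.

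The genuine gap is part (IV) on the window $\tfrac{4}{3\sqrt3}\rho\le m<\rho$, which you flag as the principal obstacle but then only outline. Two concrete points would need real work. First, your strict-monotonicity competitor (an amplitude factor composed with a dilation tuned to restore $I=0$) degenerates exactly when $A=3B$: there $\partial_\lambda I$ vanishes at the constrained point (the dilation parameter is a double root), the implicit function theorem fails, and the first-order computation giving $dE/dM=-(A+B)/(6m)<0$ is no longer available; the construction does survive a second-order expansion, but that must be shown, and in addition passing from "each admissible $u$ has a better competitor at larger mass" to $d_{m'}^I<d_m^I$ requires uniformity of the energy gain along near-minimizers, using the upper and lower bounds on $A$ and $B$ you derived. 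Second, in the profile decomposition the profiles $U^j$ do not individually satisfy $I(U^j)=0$ and cannot always be rescaled to satisfy it: the $L^2$-invariant dilation $\lambda^{3/2}U(\lambda\,\cdot)$ reaches $I=0$ automatically when $I(U^j)\le 0$ (and then only the larger root $\lambda\ge1$ decreases the energy — the smaller root increases it, so the root choice matters), whereas profiles with $I(U^j)>0$, or with mass below $\tfrac{4}{3\sqrt3}\rho$ where by (I) no mass-preserving rescaling can reach $I=0$ at all, require a separate grouping argument. This is precisely the case analysis the paper performs in proving Theorem \ref{t3.3} for the related cross-constrained problem (\ref{3.3}) (the "first/second situation" in its Case 1), and your sketch would have to reproduce something of that kind; as written, attainment in (IV) on this window, together with the asserted lower semicontinuity, is a plan rather than a proof.
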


\begin{proposition}\label{p2.18}

For $\omega\in (0, \frac{3}{16})$, let
\begin{equation*}
R_{\omega}(x)=\sqrt{\frac{1+\beta(\omega)}{4\beta(\omega)}}Q_{\omega}(\frac{3[1+\beta(\omega)]}{4\sqrt{3\beta(\omega)}}x),
\end{equation*}
where $\beta(\omega)=\int Q_{\omega}^{6}dx/\int\lvert\nabla Q_{\omega}\lvert^{2}dx$. Then one has that
\begin{equation*}
I(R_{\omega})=0,\;\;\; M(R_{\omega})=\frac{16\sqrt{3\beta(\omega)}}{9[1+\beta(\omega)]^{2}}M(Q_{\omega}), \;\;\; M(R_{\omega})\leq M(Q_{\omega}).
\end{equation*}
Additionally $M(R_{\omega})= M(Q_{\omega})$ if and only if $\beta(\omega)=\frac{1}{3}$. Here $R_{\omega}$ is called rescaled soliton.
\end{proposition}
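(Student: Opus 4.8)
The plan is to realize $R_\omega$ as a member of the two-parameter scaling family $u_{a,\lambda}(x):=a\,Q_\omega(\lambda x)$ and to exploit the elementary scaling laws together with the identities of Proposition \ref{p2.2a} and the Pohozaev identity of Proposition \ref{p2.2}(III). Writing $\beta=\beta(\omega)$ throughout, with the prescribed $a^2=\frac{1+\beta}{4\beta}$ and $\lambda=\frac{3(1+\beta)}{4\sqrt{3\beta}}$ one has $R_\omega=u_{a,\lambda}$. Recording how each functional scales,
\begin{align*}
\int|\nabla u_{a,\lambda}|^2\,dx &= a^2\lambda^{-1}\int|\nabla Q_\omega|^2\,dx, &
\int|u_{a,\lambda}|^4\,dx &= a^4\lambda^{-3}\int Q_\omega^4\,dx,\\
\int|u_{a,\lambda}|^6\,dx &= a^6\lambda^{-3}\int Q_\omega^6\,dx, &
\int|u_{a,\lambda}|^2\,dx &= a^2\lambda^{-3}\int Q_\omega^2\,dx,
\end{align*}
I reduce the whole computation to the single normalizing constant $N:=\int|\nabla Q_\omega|^2\,dx$.

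By Proposition \ref{p2.2a} one has $\int Q_\omega^6\,dx=\beta N$ and $\int Q_\omega^2\,dx=\frac{1+\beta}{3\omega}N$, while the Pohozaev identity $\frac13\int|\nabla Q_\omega|^2+\frac13\int Q_\omega^6-\frac14\int Q_\omega^4=0$ combined with $\int Q_\omega^4\,dx=4\omega\int Q_\omega^2\,dx$ yields $\int Q_\omega^4\,dx=\frac{4(1+\beta)}{3}N$. Substituting the explicit $a,\lambda$ into $I(R_\omega)=\frac13 a^2\lambda^{-1}N-\frac14 a^4\lambda^{-3}\int Q_\omega^4\,dx+\frac13 a^6\lambda^{-3}\int Q_\omega^6\,dx$ and simplifying, I expect each of the three terms to reduce to a multiple of $\frac{\sqrt3}{\sqrt\beta}\,N$, with the gradient, quartic and sextic contributions proportional to $3$, $-4$ and $1$ respectively. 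The cancellation $3-4+1=0$ then forces $I(R_\omega)=0$; this is precisely why the constants $a,\lambda$ were chosen as stated, and verifying it cleanly is the one genuinely computational step.

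For the mass I compute directly $M(R_\omega)=a^2\lambda^{-3}M(Q_\omega)$; inserting $a^2=\frac{1+\beta}{4\beta}$ and $\lambda^{-3}=\frac{64\sqrt3\,\beta^{3/2}}{9(1+\beta)^3}$ and cancelling gives $M(R_\omega)=\frac{16\sqrt{3\beta}}{9(1+\beta)^2}M(Q_\omega)$, as claimed. Finally, the inequality $M(R_\omega)\le M(Q_\omega)$ amounts to $16\sqrt{3\beta}\le 9(1+\beta)^2$. I would obtain this from the arithmetic--geometric mean inequality applied to the splitting $1+\beta=\frac13+\frac13+\frac13+\beta$, which gives $(1+\beta)^2\ge \frac{16}{3\sqrt3}\sqrt\beta$, i.e.\ $9(1+\beta)^2\ge 16\sqrt{3\beta}$; equality in AM--GM holds exactly when the four summands coincide, i.e.\ when $\beta=\frac13$. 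This simultaneously establishes $M(R_\omega)\le M(Q_\omega)$ and the characterization of the equality case. The main obstacle is purely bookkeeping in the $I(R_\omega)=0$ step: once the scaling exponents in $a$ and $\lambda$ are tracked correctly and the quartic and sextic integrals are rewritten through $\beta$ and $N$, the cancellation is automatic.
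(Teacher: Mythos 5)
Your proof is correct; I checked the scaling laws, the reduction of $\int Q_\omega^4\,dx$ and $\int Q_\omega^6\,dx$ to $\frac{4(1+\beta)}{3}N$ and $\beta N$ via Propositions \ref{p2.2a} and \ref{p2.2}(III), the three terms of $I(R_\omega)$ coming out as $3c$, $-4c$, $c$ with $c=\frac{\sqrt3}{27\sqrt\beta}N$ (so the cancellation $3-4+1=0$ indeed gives $I(R_\omega)=0$), and the mass identity $M(R_\omega)=a^2\lambda^{-3}M(Q_\omega)=\frac{16\sqrt{3\beta}}{9(1+\beta)^2}M(Q_\omega)$. Note, however, that the paper itself contains no proof of this statement: Proposition \ref{p2.18} is listed among the results imported from Killip--Oh--Pocovnicu--Visan \cite{KOPV2017} at the head of Section \ref{sec2}, so there is no in-paper argument to compare against. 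Your write-up therefore supplies a self-contained verification of a quoted result, and its one genuinely nice feature is the treatment of the inequality: rather than maximizing $\beta\mapsto\frac{16\sqrt{3\beta}}{9(1+\beta)^2}$ by calculus (the natural default), you apply AM--GM to the splitting $1+\beta=\frac13+\frac13+\frac13+\beta$, which yields $9(1+\beta)^2\geq 16\sqrt{3\beta}$ and, because equality in AM--GM forces all four summands to coincide, simultaneously pins down the equality case $\beta=\frac13$ in both directions of the ``if and only if''. The only cosmetic redundancy is that you invoke both the Pohozaev identity and the relation $\int Q_\omega^4\,dx=4\omega\int Q_\omega^2\,dx$ to compute $\int Q_\omega^4\,dx$, when the Pohozaev identity alone already gives $\int Q_\omega^4\,dx=\frac43(1+\beta)N$; this does not affect correctness.
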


\begin{proposition}\label{p2.19}

Let $m\geq \frac{4}{3\sqrt{3}}\rho$ and $u\in H_{x}^{1}(\mathbb{R}^{3})\backslash\{0\}$ obey
\begin{equation*}
M(u)=m,\;\;\; E(u)=d_{m}^{I}, \;\;\; I(u)=0,
\end{equation*}
where $H_{x}^{1}(\mathbb{R}^{3}):=\{u\in H^{1}(\mathbb{R}^{3}), u(x)=u(\lvert x\lvert)\}$. Then either $u(x)=e^{i\theta}R_{\omega}(x+x_{0})$ or $u(x)=e^{i\theta}Q_{\omega}(x+x_{0})$ for some $\theta\in [0, 2\pi)$, some $x_{0}\in \mathbb{R}^{3}$ and some $0<\omega<\frac{3}{16}$ that obeys $\beta(\omega)>\frac{1}{3}$.
\end{proposition}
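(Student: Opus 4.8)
The plan is to show that, modulo a phase and a translation, every radial minimizer $u$ of $d_m^I$ is a rescaling of a ground state $Q_\omega$, and then to read off from $I(u)=0$ and the minimality which rescalings are admissible. For the phase reduction, note first that $M$, $E$, $I$ see $u$ only through $|u|$ and $|\nabla u|$, so the diamagnetic inequality $|\nabla|u||\le|\nabla u|$ gives $E(|u|)\le E(u)$, $I(|u|)\le I(u)=0$ and $M(|u|)=m$. If $I(|u|)<0$ I would dilate mass-preservingly, $w^\lambda(x)=\lambda^{3/2}w(\lambda x)$, using $\partial_\lambda E(w^\lambda)=3\lambda^{-1}I(w^\lambda)$: since $I(w^\lambda)\to+\infty$ as $\lambda\to\infty$, increasing $\lambda$ from $1$ to the first zero $\lambda_1>1$ of $I(w^\lambda)$ yields a competitor with $M=m$, $I=0$ and $E<E(|u|)\le d_m^I$, contradicting the definition of $d_m^I$. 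Hence $I(|u|)=0$ and $\psi:=|u|$ is itself a minimizer; once $\psi>0$ is known, equality in the diamagnetic inequality forces $u=e^{i\theta}\psi$ for a constant phase.

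\emph{Reduction to a ground state.} As a constrained minimizer $\psi$ solves the Euler--Lagrange equation (\ref{1.35}) with multipliers $\mu$ (for $I=0$) and $\nu$ (for $M=m$); the substitution $\psi(x)=a\,v(\lambda x)$ with $(a,\lambda)$ as in (\ref{1.36}) reduces (\ref{1.35}) to (\ref{1.37}). As $v\ge0$ is nontrivial, the maximum principle gives $v>0$, and the uniqueness of positive solutions (Proposition \ref{p2.2}) yields $v=Q_\omega(\cdot+x_0)$ for some $\omega\in(0,\frac{3}{16})$. Thus $\psi=a\,Q_\omega(\lambda(\cdot+x_0))>0$.

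\emph{Identifying the rescaling.} Imposing $I(\psi)=0$ on $\psi=aQ_\omega(\lambda\,\cdot)$ and inserting the soliton identities $\int Q_\omega^4=\tfrac43(1+\beta)\int|\nabla Q_\omega|^2$, $\int Q_\omega^6=\beta\int|\nabla Q_\omega|^2$ of Proposition \ref{p2.2a} collapses the constraint to $\lambda^2=a^2[(1+\beta)-\beta a^2]$; when $\mu\ne0$, combining this with (\ref{1.36}) gives
\begin{equation*}
\beta(\omega)=\frac{1+6\mu}{3(1+2\mu)},
\end{equation*}
so that $\beta>\frac13\Leftrightarrow\mu>0$, and matching the resulting $(a,\lambda)$ against the parameters of Proposition \ref{p2.18} identifies $\psi=R_\omega$. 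A necessary condition for minimality is $\partial_\lambda^2E(\psi^\lambda)\big|_{\lambda=1}=3\int\psi^6-\int|\nabla\psi|^2\ge0$: were $\lambda=1$ a strict local maximum of $\lambda\mapsto E(\psi^\lambda)$, the subsequent local minimum (which exists because $E(\psi^\lambda)\to+\infty$) would be feasible with strictly smaller energy, contradicting minimality. For $\psi=Q_\omega$ (the case $\mu=0$, where $I=0$ holds automatically by the Pohozaev identity) this reads $\beta\ge\frac13$. Restoring the phase yields $u=e^{i\theta}R_\omega(\cdot+x_0)$ or $u=e^{i\theta}Q_\omega(\cdot+x_0)$, and in both cases $\beta(\omega)>\frac13$.

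I expect the crux to be fixing the sign of the virial multiplier, i.e.\ proving $\mu\ge0$ (equivalently $\beta\ge\frac13$) on the rescaled-soliton branch. The local fiber test above is degenerate there, since $\partial_\lambda^2E(\psi^\lambda)|_{\lambda=1}=0$ when $\psi=R_\omega$, so the sign cannot be read off from a single fiber and must instead be extracted from the global minimization, for instance by comparing competitors across frequencies or from the strict monotonicity and lower semicontinuity of $m\mapsto d_m^I$ in Proposition \ref{p2.16}. Upgrading $\beta\ge\frac13$ to the strict inequality, precisely at the degenerate coincidence $\beta=\frac13$ where $R_\omega=Q_\omega$, is the remaining delicate point.
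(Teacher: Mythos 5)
First, a point of reference: this paper never proves Proposition \ref{p2.19}. It is one of the statements imported verbatim from \cite{KOPV2017} at the head of Section \ref{sec2} (``According to Killip, Oh, Pocovnicu and Visan \ldots the following propositions are true''), so there is no in-paper argument to compare yours against, and your attempt must be judged on its own. The steps you do carry out are correct: the diamagnetic/dilation argument showing $\lvert u\rvert$ is again a minimizer with $I(\lvert u\rvert)=0$, hence $u=e^{i\theta}\lvert u\rvert$; the passage to the Euler--Lagrange equation (\ref{1.35}) and, via (\ref{1.36}), to (\ref{1.37}); the identities $\lambda^{2}=a^{2}[(1+\beta)-\beta a^{2}]$ and $\beta(\omega)=\frac{1+6\mu}{3(1+2\mu)}$ for $\mu\neq 0$, which correctly match $(a,\lambda)$ with the parameters of $R_{\omega}$ in Proposition \ref{p2.18}; and the fiber computation $\partial_{\lambda}^{2}E(\psi^{\lambda})\vert_{\lambda=1}=3\int\psi^{6}dx-\int\lvert\nabla\psi\rvert^{2}dx$, which yields $\beta(\omega)\geq\frac{1}{3}$ in the case $\mu=0$.

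However, the proof is incomplete exactly where you say it is, and the missing part is the substance of the conclusion, since ``$\beta(\omega)>\frac{1}{3}$'' is what Proposition \ref{p2.19} asserts. Concretely, two things are absent. (i) You never prove $\mu\geq 0$ (nor $\nu>0$). Without sign information the substitution (\ref{1.36}) need not be defined or need not land on (\ref{1.37}) with $\omega\in(0,\frac{3}{16})$, and, more importantly, minimizers of the form $R_{\omega}$ with $-\frac{1}{6}<\mu<0$, i.e.\ $\beta(\omega)<\frac{1}{3}$, are not excluded; your fiber test is structurally blind there because $3\int R_{\omega}^{6}dx=\int\lvert\nabla R_{\omega}\rvert^{2}dx$ holds identically for every $\omega$. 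A clean repair is available from your own first step: the dilation argument shows that (\ref{1.31}) is unchanged if the constraint $I(u)=0$ is relaxed to $I(u)\leq 0$ and that every minimizer of the relaxed problem still satisfies $I=0$; for an active inequality constraint the multiplier carries a sign, which is precisely $\mu\geq0$ and legitimizes (\ref{1.36}) at the same time. (ii) Strictness. Even granting $\mu\geq0$, the degenerate case $\mu=0$, $\beta(\omega)=\frac{1}{3}$ (where $Q_{\omega}=R_{\omega}$) survives every mass-preserving dilation: there $\partial_{\lambda}E=\partial_{\lambda}^{2}E=0$ while $\partial_{\lambda}^{3}E(\psi^{\lambda})\vert_{\lambda=1}=4\int\lvert\nabla\psi\rvert^{2}dx>0$, so along the fiber $E$ drops below $d_{m}^{I}$ only on the side where $I(\psi^{\lambda})>0$, and since $\lambda=1$ is a double root of $I(\psi^{\lambda})=0$ no rescaling returns to the constraint set except to $\psi$ itself; hence no contradiction with minimality can be extracted from the fiber, provably. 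Ruling out $\beta(\omega)=\frac{1}{3}$ requires variations transverse to the fiber, e.g.\ the two-parameter family $aQ_{\omega'}(\lambda\,\cdot)$ with $\omega'$ varying, $(a,\lambda)$ solving the two constraints, and the $\omega$-derivative identities of Proposition \ref{p2.2a} --- the cross-frequency comparison you name but do not perform. As it stands you have proved the dichotomy of profiles and $\beta(\omega)\geq\frac{1}{3}$ conditionally on $\mu\geq0$, which falls short of the proposition.
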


\begin{lemma}\label{p2.17}

For $\omega\in (0,  \frac{3}{16})$,  one has that
\begin{equation*}
\frac{4}{3\sqrt{3}}\rho \leq M(R_{\omega})\leq M(Q_{\omega}).
\end{equation*}
Moreover there exists
\begin{equation*}
\frac{4}{3\sqrt{3}}\rho \leq m_{0}\leq \rho
\end{equation*}
such that $M(Q_{\omega})\geq m_{0}$ for all $\omega\in (0, \frac{3}{16})$.
\end{lemma}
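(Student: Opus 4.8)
The plan is to treat the two displayed chains separately, leaning almost entirely on Proposition \ref{p2.18} and Proposition \ref{p2.16}. The upper bound $M(R_{\omega})\le M(Q_{\omega})$ in the first display is furnished verbatim by Proposition \ref{p2.18}, so no work is needed there; everything reduces to establishing the lower bound $\frac{4}{3\sqrt{3}}\rho\le M(R_{\omega})$ and then the two-sided estimate for $m_{0}$.

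For the lower bound on $M(R_{\omega})$, the key observation is that Proposition \ref{p2.18} supplies $I(R_{\omega})=0$, so the rescaled soliton $R_{\omega}$ is a nonzero admissible competitor for the constrained problem $d^{I}_{M(R_{\omega})}$ defined in (\ref{1.31}). Consequently the admissible set $\{u\in H^{1}(\mathbb{R}^{3}):\,M(u)=M(R_{\omega}),\,I(u)=0\}$ is nonempty and $d^{I}_{M(R_{\omega})}\le E(R_{\omega})<\infty$. But Proposition \ref{p2.16}(I) asserts $d^{I}_{m}=\infty$ for every $0<m<\frac{4}{3\sqrt{3}}\rho$, which forces the admissible set to be empty for such $m$. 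Reconciling these two facts forces $M(R_{\omega})\ge\frac{4}{3\sqrt{3}}\rho$, and combined with Proposition \ref{p2.18} this gives $\frac{4}{3\sqrt{3}}\rho\le M(R_{\omega})\le M(Q_{\omega})$ for every $\omega\in(0,\frac{3}{16})$.

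For the statement about $m_{0}$, I would set $m_{0}:=\inf_{0<\omega<3/16}M(Q_{\omega})$ and read off both bounds. The lower bound is immediate: the chain just proved gives $M(Q_{\omega})\ge M(R_{\omega})\ge\frac{4}{3\sqrt{3}}\rho$ uniformly in $\omega$, so passing to the infimum yields $m_{0}\ge\frac{4}{3\sqrt{3}}\rho$, while $M(Q_{\omega})\ge m_{0}$ for all $\omega$ holds by definition of the infimum. For the upper bound $m_{0}\le\rho$, I would exhibit one frequency whose soliton has mass exactly $\rho$: by Proposition \ref{p2.5}(II) (equivalently Proposition \ref{p2.3}) the problem $d_{\rho}$ is attained at a positive minimizer of mass $\rho$ solving (\ref{1.2}) with some $\omega_{0}\in(0,\frac{3}{16})$; by the uniqueness in Proposition \ref{p2.2}(I) this minimizer coincides with $Q_{\omega_{0}}$ up to translation, hence $M(Q_{\omega_{0}})=\rho$ and therefore $m_{0}\le M(Q_{\omega_{0}})=\rho$.

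The only genuinely nonroutine point is the lower-bound argument, which hinges on correctly interpreting the value $d^{I}_{m}=\infty$ in Proposition \ref{p2.16}(I) as the emptiness of the admissibility constraint rather than as a growth statement for $E$; once this reading is in place, the nonvanishing of $R_{\omega}$ together with $I(R_{\omega})=0$ closes the gap instantly. One may alternatively invoke the continuity of $\omega\mapsto M(Q_{\omega})$ from Proposition \ref{p2.2a} together with $M(Q_{\omega})\to\infty$ as $\omega\to0$ and as $\omega\to\frac{3}{16}$ to conclude that $m_{0}$ is in fact attained at an interior critical frequency, though this refinement is not required for the lemma as stated.
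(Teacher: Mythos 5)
Your proof is correct and follows essentially the same route as the paper: both use $R_{\omega}$ (via Proposition \ref{p2.18}, which gives $I(R_{\omega})=0$ and $M(R_{\omega})\le M(Q_{\omega})$) as a finite-energy admissible competitor for $d^{I}_{M(R_{\omega})}$, so that Proposition \ref{p2.16}(I) forces $M(R_{\omega})\ge\frac{4}{3\sqrt{3}}\rho$, and both obtain $m_{0}\le\rho$ by noting that the Weinstein minimizer $\phi$ of Proposition \ref{p2.3} solves (\ref{1.2}) at some $\omega_{\phi}\in(0,\frac{3}{16})$ with mass $\rho$, hence equals $Q_{\omega_{\phi}}$ up to translation. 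Your explicit appeal to the uniqueness in Proposition \ref{p2.2}(I) and your remark on reading $d^{I}_{m}=\infty$ are just slightly more careful renderings of steps the paper leaves implicit.
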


\begin{proof}
For $\omega\in (0,  \frac{3}{16})$,  let $M(R_{\omega})=m$.  From Proposition  \ref{p2.18} one has that
\begin{equation}\label{2.1}
R_{\omega}\in H^{1}(\mathbb{R}^{3}),\;\;\; E(R_{\omega})<\infty,\;\;\; M(R_{\omega})=m,\;\;\; I(R_{\omega})=0.
\end{equation}
Therefore Proposition \ref{p2.16} yields that $d_{m}^{I}<\infty$,  and
\begin{equation}\label{2.2}
m=M(R_{\omega})\geq \frac{4}{3\sqrt{3}}\rho.
\end{equation}
Thus Proposition  \ref{p2.18} implies that for all $\omega\in (0,  \frac{3}{16})$,
\begin{equation}\label{2.3}
M(Q_{\omega})\geq M(R_{\omega})\geq \frac{4}{3\sqrt{3}}\rho .
\end{equation}
Let
\begin{equation}\label{2.4}
m_{0}=\inf_{\{0<\omega<\frac{3}{16}\}}M(Q_{\omega}).
\end{equation}
From Proposition  \ref{p2.3},  $\phi$  satisfies (\ref{1.2}) with $\omega=\omega_{\phi}\in (0,  \frac{3}{16})$ and $M(\phi)=\rho$.  It follows that
\begin{equation}\label{2.5}
m_{0}\leq \rho.
\end{equation}
Then by  (\ref{2.3}),   (\ref{2.4}) and  (\ref{2.5}), one concludes that
\begin{equation}\label{2.6}
\frac{4}{3\sqrt{3}}\rho \leq m_{0}\leq \rho.
\end{equation}
This proves Lemma  \ref{p2.17}.
\end{proof}


\section{The cross-constrained variational problem}\label{sec3}
From Hmidi and Keraani \cite{HK2005}, we have the following profile decomposition theory.
\begin{proposition}\label{p3.1}
Let $\{u_n\}_{n=1}^\infty$ be a bounded sequence in $H^1(\mathbb{R}^3)$. Then there exists a subsequence of $\{u_n\}_{n=1}^\infty$, still denoted by $\{u_n\}_{n=1}^\infty$, a sequence
$\{U^j\}_{j=1}^\infty$ in $H^1(\mathbb{R}^3)$ and a family of
$\{x_n^j\}_{j=1}^\infty \subset\mathbb{R}^3$ satisfying the following properties.\\
(I) For every $j\neq k$, $\lvert x_n^j-x^k_n\rvert\rightarrow\infty$ as $n\rightarrow\infty$.\\
(II) For every $l\geq1$ and every $x\in\mathbb{R}^3$, $u_n(x)$ can be decomposed by
\begin{equation*}
u_n(x)=\sum_{j=1}^l U^j(x-x^j_n)+u^l_n,
\end{equation*}
with the remaining term $u^l_n=u^l_n(x)$ satisfying
\begin{equation*}
\lim_{l\rightarrow\infty}\limsup_{n\rightarrow\infty} \lvert\lvert u^l_n\rvert \rvert_{L^q(\mathbb{R}^3)}=0,\  \ for\ every \ q\in [2,6].
\end{equation*}
Moreover, as $n\rightarrow\infty$,
\begin{equation*}
\begin{split}
&\lvert\lvert u_n\rvert \rvert^2_{L^2}=\sum^l_{j=1}\lvert\lvert U^j \rvert \rvert^2_{L^2}
+\lvert\lvert u^l_n \rvert \rvert^2_{L^2}+o(1),\\
&\lvert\lvert \nabla u_n\rvert \rvert^2_{L^2}=\sum^l_{j=1}\lvert\lvert \nabla U^j \rvert \rvert^2_{L^2}
+\lvert\lvert \nabla u^l_n \rvert \rvert^2_{L^2}+o(1),\\
&\int[\sum^l_{j=1}U^j(x-x_n^j)]^{q}dx=\sum^l_{j=1}\int[U^j(x-x_n^j)]^{q}dx+o(1),
\end{split}
\end{equation*}
where $\lim\limits_{n\rightarrow\infty}o(1)=0$, $2\leq q\leq 6$.
\end{proposition}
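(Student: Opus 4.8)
The plan is to build the profiles one at a time by an iterative extraction of maximal weak limits, and to control the leftover by Lions' vanishing lemma. I set $u_n^0 := u_n$ and, given the bounded remainder sequence $\{u_n^{l-1}\}$, I measure how much can still be captured by translation through the quantity $\mu_l$, defined as the supremum of $\|U\|_{L^2}$ over all $U\in H^1(\mathbb{R}^3)$ that arise as a weak-$H^1$ limit of $u_n^{l-1}(\cdot + x_n)$ along some subsequence and some sequence of shifts $\{x_n\}$. If $\mu_l = 0$ I stop; otherwise I choose shifts $\{x_n^l\}$ and a profile $U^l$ with $\|U^l\|_{L^2} \ge \tfrac12\mu_l$ realizing a weak limit $u_n^{l-1}(\cdot + x_n^l) \rightharpoonup U^l$ along a subsequence, and I set $u_n^l := u_n^{l-1} - U^l(\cdot - x_n^l)$, so that after recentering $u_n^l(\cdot + x_n^l) \rightharpoonup 0$. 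A diagonal argument produces one subsequence valid for every $l$ simultaneously.

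The Pythagorean identities come next, and they are the engine of the whole scheme. From $u_n^{l-1} = U^l(\cdot - x_n^l) + u_n^l$ together with $u_n^l(\cdot + x_n^l)\rightharpoonup 0$, the $L^2$ cross term $\langle U^l, u_n^l(\cdot + x_n^l)\rangle$ vanishes as $n\to\infty$, and iterating gives $\|u_n\|_{L^2}^2 = \sum_{j=1}^l \|U^j\|_{L^2}^2 + \|u_n^l\|_{L^2}^2 + o(1)$; the same argument in $\dot H^1$ yields the gradient identity. In particular $\sum_{j\ge 1}\|U^j\|_{L^2}^2 \le \limsup_n \|u_n\|_{L^2}^2 < \infty$, so $\|U^j\|_{L^2}\to 0$ and hence $\mu_j\to 0$ by the near-maximal choice $\|U^j\|_{L^2}\ge\tfrac12\mu_j$. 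To obtain the separation property (I), I argue by contradiction: if $x_n^j - x_n^k$ stayed bounded for some $j>k$, then telescoping the already-subtracted profiles and using separation for the earlier index pairs (by induction) would force $U^j$ to be a nonzero translate of something already removed, contradicting $u_n^l(\cdot + x_n^l)\rightharpoonup 0$. Separation (I) is exactly what makes the cross terms between distinct profiles $U^j(\cdot - x_n^j)$ and $U^k(\cdot - x_n^k)$ vanish, so the decomposition $\int[\sum_{j=1}^l U^j(\cdot - x_n^j)]^q\,dx = \sum_{j=1}^l \int [U^j]^q\,dx + o(1)$ follows from a truncation argument for finitely many widely separated bumps.

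The heart of the argument is turning $\mu_l\to 0$ into vanishing of the remainder in the intermediate Lebesgue norms. The key link is that $\mu_l$ controls the local $L^2$ mass: if $\limsup_n \sup_{y}\int_{B(y,1)}|u_n^l|^2\,dx$ did not tend to $0$ as $l\to\infty$, then by the compact embedding $H^1(B(y,1))\hookrightarrow L^2(B(y,1))$ one could extract, after translation, a nonzero weak-$H^1$ limit of $u_n^l$, contradicting the smallness of $\mu_l$. Lions' vanishing lemma then converts this local-$L^2$ smallness, together with the uniform $H^1$ bound, into $\lim_{l\to\infty}\limsup_n \|u_n^l\|_{L^q} = 0$ for every $q$ in the strictly subcritical range $2<q<6$. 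The endpoint information is read off from the Pythagorean identities: the $L^2$ and $\dot H^1$ expansions of the preceding step record precisely the mass and energy not carried by the profiles, and interpolation between a subcritical $L^q$ and the uniform $H^1$ (equivalently $L^6$) bound controls the remainder near the Sobolev endpoint.

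I expect the main obstacle to be twofold and intertwined: first, the bookkeeping behind the separation (I), which must be maintained consistently across the infinitely many extraction steps and the accompanying diagonal subsequence, since every later estimate (the Pythagorean identities and the $L^q$ decoupling) rests on it; and second, the passage in the previous paragraph from \emph{no surviving weak limit after translation} ($\mu_l\to0$) to genuine $L^q$-vanishing through the local-mass reformulation and Lions' lemma. These two points are where the concentration-compactness mechanism really does its work, while the remaining identities are then bookkeeping.
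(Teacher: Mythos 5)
Your construction is, in substance, the proof that the paper itself never gives: Proposition \ref{p3.1} is quoted from Hmidi--Keraani \cite{HK2005} without proof, and their argument is exactly your scheme --- iterative extraction of near-maximal weak limits along translations, telescoping Pythagorean expansions from the recentered weak nullity $u_n^l(\cdot+x_n^l)\rightharpoonup 0$, summability of $\sum_j\lvert\lvert U^j\rvert\rvert_{L^2}^2$ forcing $\mu_l\to 0$, the inductive contradiction for the separation of cores, and the conversion of ``no surviving weak limit after translation'' into smallness of $\sup_y\int_{B(y,1)}\lvert u_n^l\rvert^2\,dx$ via Rellich compactness, whence $L^q$-vanishing by Lions' lemma. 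All of this is correct, and the two points you flag as delicate (the bookkeeping for (I) across the diagonal subsequence, and the passage from $\mu_l\to 0$ to $L^q$-vanishing) are handled the standard way in \cite{HK2005}; your route through Lions' vanishing lemma and theirs through a unit-cube covering estimate are the same mechanism.

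The one genuine defect is your endpoint paragraph. The statement as printed asserts vanishing for every $q\in[2,6]$, and you try to capture $q=2$ and $q=6$ ``from the Pythagorean identities and interpolation.'' This cannot work: interpolating a subcritical $L^{q_0}$ norm ($2<q_0<6$) against the uniform $L^2\cap L^6$ bound gives smallness only for exponents strictly between the endpoints, and the Pythagorean identities say the opposite of what you would need --- they record precisely the $L^2$ mass and $\dot H^1$ energy retained by the remainder, which in general do not vanish. Indeed the endpoint claims are false: for the spreading sequence $u_n(x)=n^{-3/2}\phi(x/n)$ every translated subsequence is weakly null, so all profiles vanish while $\lvert\lvert u_n^l\rvert\rvert_{L^2}=\lvert\lvert\phi\rvert\rvert_{L^2}$ stays constant; dually, the concentrating bubble $u_n(x)=n^{1/2}\phi(nx)$ is bounded in $H^1(\mathbb{R}^3)$, all its translated weak limits are zero (translations cannot absorb scaling concentration), and $\lvert\lvert u_n^l\rvert\rvert_{L^6}$ stays constant. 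Hmidi--Keraani state the remainder estimate for $q\in(2,6)$ only; the closed interval in the paper is a misquotation, harmless downstream because the proof of Theorem \ref{t3.3} uses only $L^4$-smallness of the remainder (the $L^2$ and $L^6$ terms enter $H$ and $I$ with favorable signs in (\ref{3.20})--(\ref{3.21})). So your proof is right exactly where the proposition is right: delete the endpoint paragraph and restrict the vanishing claim to $2<q<6$.
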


The sequence $\{x_n^j\}_{n=1}^\infty$ in Proposition \ref{p3.1}  is called to satisfy the orthogonality condition if and only if for every $k\neq j$,\; $\lvert x_n^k-x_n^j\rvert\rightarrow\infty$ as \; $n\rightarrow\infty$.

By a direct calculation, we have the following proposition.

\begin{proposition}\label{p3.2}
For $u\in H^1(\mathbb{R}^{3})\backslash \{0\}$, let $u_{\lambda}=\lambda^{\frac{3}{2}}u(\lambda x)$ with $\lambda>0$. Then
\begin{equation*}
\int \rvert u_{\lambda}\rvert^{2}dx=\int \rvert u \rvert^{2}dx,\;\;\; \frac{d}{d\lambda}E(u_{\lambda})=\frac{3}{\lambda}I(u_{\lambda}).
\end{equation*}
In addition, $E(u_{\lambda})$ achieves the minimum at $\lambda_{0}$ when $\lambda_{0}$ satisfies $I(u_{\lambda_0})=0$.
\end{proposition}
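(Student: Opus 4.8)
The plan is to reduce the whole statement to an elementary one–variable calculus problem by recording how each term of $E$ and $I$ transforms under the $L^2$-critical dilation $u_\lambda=\lambda^{3/2}u(\lambda\cdot)$. Performing the change of variables $y=\lambda x$ gives immediately $\int|u_\lambda|^2\,dx=\int|u|^2\,dx$ (the exponent $\tfrac32=\tfrac d2$ in dimension $d=3$ is exactly the mass-critical one), which is the first asserted identity. The same substitution produces the three homogeneities
\[
\int|\nabla u_\lambda|^2\,dx=\lambda^2\int|\nabla u|^2\,dx,\qquad
\int|u_\lambda|^4\,dx=\lambda^3\int|u|^4\,dx,\qquad
\int|u_\lambda|^6\,dx=\lambda^6\int|u|^6\,dx.
\]

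Next I would abbreviate $A=\int|\nabla u|^2\,dx$, $B=\int|u|^4\,dx$, $C=\int|u|^6\,dx$, all strictly positive since $u\neq0$. Substituting the scalings into (\ref{1.7}) and (\ref{1.30}) reduces the two functionals to explicit polynomials in $\lambda$,
\[
E(u_\lambda)=\tfrac12\lambda^2A-\tfrac14\lambda^3B+\tfrac16\lambda^6C,\qquad
I(u_\lambda)=\tfrac13\lambda^2A-\tfrac14\lambda^3B+\tfrac13\lambda^6C.
\]
Differentiating the first in $\lambda$ yields $\tfrac{d}{d\lambda}E(u_\lambda)=\lambda A-\tfrac34\lambda^2B+\lambda^5C$, while multiplying the second by $3/\lambda$ gives $\tfrac3\lambda I(u_\lambda)=\lambda A-\tfrac34\lambda^2B+\lambda^5C$. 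The two expressions coincide term by term, which is the second identity.

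For the minimization claim I would argue from the derivative formula just obtained: the critical points of $\lambda\mapsto E(u_\lambda)$ on $(0,\infty)$ are precisely the zeros of $I(u_\lambda)$. Writing $I(u_\lambda)=\lambda^2 p(\lambda)$ with $p(\lambda)=\tfrac13A-\tfrac14\lambda B+\tfrac13\lambda^4C$, one has $p''(\lambda)=4C\lambda^2>0$, so $p$ is strictly convex on $(0,\infty)$ with $p(0)>0$ and $p(\lambda)\to+\infty$. Consequently $I(u_\lambda)>0$ for $\lambda$ near $0$ and near $\infty$, so $E(u_\lambda)$ is increasing at both ends, and $p$ has at most two zeros. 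At a zero $\lambda_0$ of $p$ a short computation gives $\tfrac{d^2}{d\lambda^2}E(u_\lambda)\big|_{\lambda_0}=3\lambda_0 p'(\lambda_0)=-\tfrac34\lambda_0 B+4\lambda_0^4C$, which is positive exactly at the larger zero (where $p$ passes from negative to positive); this is the $\lambda_0$ at which $E(u_\lambda)$ attains its minimum.

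The hard part will be only this last claim: everything up to the derivative identity is routine substitution, but the minimization requires some care. Because $I(u_\lambda)>0$ at both ends, any sign change of $I$ must come in a pair, producing a local maximum followed by a local minimum, and one must single out the local minimum as the $\lambda_0$ intended by the statement. The strict convexity of $p$ is exactly the tool that guarantees at most two zeros and lets the second-derivative test identify the minimizer unambiguously; to confirm it is the global minimum over $(0,\infty)$ whenever a zero of $I$ exists, I would also compare $E(u_{\lambda_0})$ against the limiting value $\lim_{\lambda\to0^+}E(u_\lambda)=0$.
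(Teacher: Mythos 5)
Your verification of the two identities is correct and is, in substance, the paper's entire proof: the paper justifies Proposition \ref{p3.2} only with the phrase ``by a direct calculation,'' and that calculation is exactly your reduction to $E(u_\lambda)=\tfrac12\lambda^2A-\tfrac14\lambda^3B+\tfrac16\lambda^6C$ and $I(u_\lambda)=\lambda^2p(\lambda)$ with $p(\lambda)=\tfrac13A-\tfrac14\lambda B+\tfrac13\lambda^4C$. Your analysis of the critical-point structure (strict convexity of $p$, at most two zeros $\lambda_1\le\lambda_2$, local maximum of $E(u_\cdot)$ at $\lambda_1$, local minimum at $\lambda_2$) is also correct, and is already more precise than the proposition itself, whose literal wording would apply equally to the smaller zero $\lambda_1$, where $E$ has a local \emph{maximum}.

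The genuine gap is the step you defer to the end, and it cannot be closed, because the global-minimality claim is false in general. When $I(u_{\lambda_2})=0$ one gets
\begin{equation*}
E(u_{\lambda_2})=\frac16\int\lvert\nabla u_{\lambda_2}\rvert^2dx-\frac16\int\lvert u_{\lambda_2}\rvert^6dx,
\end{equation*}
while the condition that $\lambda_2$ be the larger (nondegenerate) zero works out to $\int\lvert\nabla u_{\lambda_2}\rvert^2dx<3\int\lvert u_{\lambda_2}\rvert^6dx$. So if $u$ is such that
\begin{equation*}
\int\lvert u_{\lambda_2}\rvert^6dx<\int\lvert\nabla u_{\lambda_2}\rvert^2dx<3\int\lvert u_{\lambda_2}\rvert^6dx,
\end{equation*}
then $E(u_{\lambda_2})>0$, whereas $E(u_\lambda)\to0$ as $\lambda\to0^+$; hence $\inf_{\lambda>0}E(u_\lambda)=0$ is not attained, and $\lambda_2$ is only a local minimizer. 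Such $u$ exist: for any fixed nonzero profile $w$, a short two-parameter computation shows one can choose $t,s>0$ so that $u=tw(s\cdot)$ satisfies $I(u)=0$ together with $\int\lvert u\rvert^6dx<\int\lvert\nabla u\rvert^2dx<3\int\lvert u\rvert^6dx$ (so that $\lambda_2=1$ for this $u$). Thus the comparison with $\lim_{\lambda\to0^+}E(u_\lambda)=0$ that you propose goes the wrong way in exactly this regime. What is true --- and what your increasing/decreasing/increasing picture already proves --- is the relative statement $E(u_{\lambda_2})=\min_{\lambda\in[\lambda_1,\infty)}E(u_\lambda)$; in particular, if $I(u)\le0$, then $\lambda_1\le1\le\lambda_2$ and $E(u_{\lambda_2})\le E(u)$. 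This weaker version is all the paper needs where it invokes Proposition \ref{p3.2} under the hypothesis $I\le0$, e.g.\ at (\ref{3.35}) and (\ref{3.42}) in the proof of Theorem \ref{t3.3}, and it is the version you should state and prove; the defect you ran into lies in the proposition's own formulation, not in a missing idea on your side.
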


Let $m\geq \frac{4}{3\sqrt{3}}\rho$ and $\psi$ be a positive minimizer of (\ref{1.31}). Then $\psi$ satisfies the Euler-Lagrange equation
\begin{equation}\label{3.1}
-\Delta u+\rvert u \rvert^{4}u-\rvert u \rvert^{2}u+\mu(-2\Delta u+6\rvert u \rvert^{4}u-3\rvert u \rvert^{2}u)+2\nu u=0,
\end{equation}
where $\mu$ and $\nu$ are the corresponding Lagrange multipliers. By \cite{KOPV2017}, one has that $\nu>0$, $\mu\geq 0$ and under the change of variables $u(x)=a v(\lambda x)$ with
\begin{equation}\label{3.2}
a^{2}=\frac{1+3\mu}{1+6\mu},\;\; \lambda^{2}=\frac{(1+3\mu)^{2}}{(1+2\mu)(1+6\mu)},\;\;\; \omega=\frac{2\nu(1+6\mu)}{(1+3\mu)^{2}},
\end{equation}
the Euler-Lagrange equation (\ref{3.1}) becomes (\ref{1.2}).

Now we define the following variational problem with the above $\psi$ and $\nu$
\begin{equation}\label{3.3}
d:=\mathop{\inf}\limits_ {\{u\in H^1(\mathbb{R}^{3}),\  0<\int\lvert u\rvert^2dx\leq\int
\lvert \psi\rvert^2dx,\  I(u)=0\}
}
[E(u)+\nu\int\lvert u\rvert^2dx].
\end{equation}
We see that (\ref{3.3}) is related to $d_{m}^{I}$ and $E(u)+\nu\int\lvert u\rvert^2dx$ is constrained to the flow $\{u\in H^{1}(\mathbb{R}^{3}), 0<\int\lvert u\rvert^2dx\leq\int
\lvert \psi\rvert^2dx\}$ with $I(u)=0$. Therefore we call (\ref{3.3}) cross-constrained variational problem, which is a new type of variational problem introduced in this paper. We have the following result.

\begin{theorem}\label{t3.3}
Let $m\geq \frac{4}{3\sqrt{3}}\rho$. Then the infimum $d$ in (\ref{3.3}) is achieved. Moreover $\psi$ is the unique positive minimizer of (\ref{3.3}) up to translations.
\end{theorem}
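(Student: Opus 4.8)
The plan is to reduce (\ref{3.3}) to a one–dimensional problem over the admissible mass level, produce a minimizer by concentration–compactness, and then identify it with $\psi$. First I would record that $\psi$ is admissible for (\ref{3.3}): it satisfies $I(\psi)=0$ and $M(\psi)=m$, so $d\le E(\psi)+\nu M(\psi)$. Conversely, for any admissible $u$ with $M(u)=m'$ one has $E(u)\ge d^{I}_{m'}$ by the definition (\ref{1.31}), while $I(u)=0$ forces $m'\ge\frac{4}{3\sqrt3}\rho$ through Proposition~\ref{p2.16}(I). Setting $\Phi(m')=d^{I}_{m'}+\nu m'$, the problem collapses to $d=\inf_{m'\in[\frac{4}{3\sqrt3}\rho,\,m]}\Phi(m')$. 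I would also establish the positivity $d>0$, which is the engine of the compactness step: pairing (\ref{3.1}) with $\psi$ and eliminating $\lvert\lvert\psi\rvert\rvert_{L^{4}}^{4}$ via $I(\psi)=0$ yields the identity $d=E(\psi)+\nu M(\psi)=\frac13\lvert\lvert\nabla\psi\rvert\rvert_{L^2}^{2}+6\mu E(\psi)$, and since $E(\psi)=d^{I}_{m}>0$ for $m<\rho$ (Proposition~\ref{p2.16}(II)) while $\mu=0$ for $m\ge\rho$ (there $d^{I}_{m}=d_{m}$ and the minimizer already solves (\ref{1.2}), so the $I$–constraint is inactive), positivity holds in every case.

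For existence I would take a minimizing sequence $\{u_n\}$, which is bounded in $H^1$ by the Gagliardo–Nirenberg–H\"older and interpolation inequalities of Proposition~\ref{p2.3} together with the mass bound $M(u_n)\le m$, and apply the profile decomposition of Proposition~\ref{p3.1}. The orthogonality of the translations makes $M$, $E$ and $I$ split across the profiles $U^j$ and the remainder $u_n^l$. Each nonzero profile $U^j$, after the scaling of Proposition~\ref{p3.2} that lowers $E$ to its value at $I=0$, becomes admissible for (\ref{3.3}) with mass $\le m$, hence contributes at least $d$ to $E+\nu M$; the remainder contributes $\ge -o(1)$ because its $L^4,L^6$ norms vanish. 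As the total equals $d$ and $d>0$, two or more nonzero profiles would force $d\ge 2d$, a contradiction; thus exactly one profile survives and the remainder tends to $0$ in $H^1$, giving strong convergence (up to translation) to a minimizer $w$ with $I(w)=0$. A Schwarz symmetric rearrangement, followed again by the rescaling of Proposition~\ref{p3.2} to restore $I=0$, lets me take $w$ positive and radial without increasing $E+\nu M$.

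It remains to identify $w$ with $\psi$. Since $w$ minimizes $E$ among admissible functions of its own mass $m_w:=M(w)$, one has $E(w)=d^{I}_{m_w}$ and $d=\Phi(m_w)$, so $m_w$ realizes the infimum of $\Phi$. The crux is the supporting–line inequality $\Phi(m')\ge\Phi(m)$ for $m'\le m$, equivalently that $m'\mapsto d^{I}_{m'}$ lies above its tangent at $m$; this I would derive from the convexity of $d^{I}_{m'}$ (monotonicity of the mass–multiplier $\nu(m')$) anchored by the Euler–Lagrange sensitivity $\frac{d}{dm'}d^{I}_{m'}\big|_{m'=m}=-\nu$. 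It then follows that $m_w=m$ and $w$ is a minimizer of $d^{I}_m$ solving (\ref{3.1}) with the \emph{same} multiplier $\nu$; the change of variables (\ref{3.2}) converts this into (\ref{1.2}), and uniqueness of the positive solution $Q_\omega$ (Proposition~\ref{p2.2}) together with the matching $M(w)=M(\psi)=m$ pins down $w=\psi$ up to translation.

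The main obstacle I anticipate is twofold. The dichotomy of concentration–compactness, usually the hardest part for an energy–critical nonlinearity, is here defused cleanly by the strict positivity $d>0$, so the genuine difficulty lies in the identification step: proving the convexity/supporting–line property of $d^{I}_{m'}$ (so that the minimizing mass is exactly $m$) and then rigidly matching the minimizer with the \emph{given} $\psi$ rather than with some other soliton of the same mass. This is precisely where the tuning of the cross-constraint to the specific Lagrange multiplier $\nu$ of $\psi$ must be exploited.
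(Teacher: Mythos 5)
Your proposal contains genuine gaps at its three load-bearing points. First, the positivity $d>0$ is not proved: writing $d=E(\psi)+\nu M(\psi)$ presupposes that $\psi$ attains the infimum, which is precisely the conclusion to be established; a priori one only has $d\le E(\psi)+\nu M(\psi)$, so an identity for $H(\psi)$ gives an \emph{upper} bound on $d$, not a lower one. Through your own reduction $d=\inf_{m'}\bigl[d^{I}_{m'}+\nu m'\bigr]$, positivity would require $d_{m'}+\nu m'>0$ for every $m'\in[\rho,m]$, and by the Pohozaev identities this amounts to $\frac13\lVert\nabla Q_{\omega'}\rVert_{L^2}^{2}+(\nu-\tfrac{\omega'}{2})m'>0$, i.e.\ to a comparison of $\nu$ with the multiplier at the smaller mass — exactly the monotonicity-type information that is unavailable at this stage. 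The paper does not assume $d>0$: its proof splits into Case 1 ($d\ge0$, profile decomposition) and Case 2 ($d<0$, Schwarz rearrangement plus Strauss radial compactness), and your entire compactness mechanism (``two profiles force $d\ge 2d$'') collapses if $d\le0$. Second, your claim that \emph{every} nonzero profile can be rescaled to satisfy $I=0$ is false: $\lambda\mapsto I(\lambda^{3/2}U(\lambda\cdot))$ is positive for both small and large $\lambda$, and a zero exists only when $\int\frac13\lvert\nabla U\rvert^2-\frac14\lvert U\rvert^4\,dx<0$; since only the \emph{sum} of the profile $I$-values is $\le0$, individual profiles may admit no such rescaling. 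The ``two situations'' in the paper's Case 1 (work with $U$ when this quantity is negative, otherwise pass to the complementary part $V=\sum_jU^j_n-U$, which then has $I(V)<0$) exist precisely to handle this.

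Third, and most seriously, the identification step is circular relative to the paper's program. You invoke convexity of $m'\mapsto d^{I}_{m'}$ and the sensitivity formula $\frac{d}{dm'}d^{I}_{m'}\big|_{m'=m}=-\nu$; Proposition \ref{p2.16} supplies only strict decrease and lower semicontinuity, not convexity or differentiability, and monotonicity of the mass--multiplier map is essentially the content of the monotonicity conjecture (Theorem \ref{t1.2}), which the paper proves \emph{downstream from} Theorem \ref{t3.3}. The paper avoids this entirely: it shows the minimizer $\xi$ has mass exactly $m$ using only the strict decrease of $d^{I}_{m}$, and then identifies $\xi$ with $\psi$ by computing the Euler--Lagrange equation (\ref{3.48}) and its Pohozaev identities to force the constraint multiplier $\Lambda$ of $\xi$ to equal the multiplier $\mu$ of $\psi$, whence both rescale through (\ref{3.2}) to the \emph{same} $Q_\omega$ with the same $\omega=\frac{2\nu(1+6\Lambda)}{(1+3\Lambda)^{2}}$ — this is where the fixed $\nu$ in the cross-constraint does its work. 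Your final step (``$M(w)=M(\psi)=m$ pins down $w=\psi$'') has the same defect: two minimizers of $d^{I}_{m}$ could a priori correspond to different frequencies, since injectivity of $\omega\mapsto M(Q_\omega)$ is not available here (uniqueness of minimizers of (\ref{1.31}) is Theorem \ref{t4.1}, itself a consequence of Theorem \ref{t3.3}). Your one genuinely good observation — the reduction to the one-dimensional problem $\Phi(m')=d^{I}_{m'}+\nu m'$ on the compact interval $[\frac{4}{3\sqrt3}\rho,m]$, where lower semicontinuity alone already yields attainment of $d$ — is cleaner than the paper's existence argument, but it does not help with the identification, which is where the theorem actually lives.
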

\begin{proof}
Since $m\geq \frac{4}{3\sqrt{3}}\rho$, from Proposition \ref{p2.16}, the variational problem (\ref{1.31}) is achieved at some positive minimizer $\psi$. Thus we can take a positive energy minimizer $\psi$ of $d_{m}^{I}$ and the corresponding Lagrange multiplier $2\nu$ of $\psi$. In addition $\psi$ satisfies that $I(\psi)=0$. Therefore
\begin{equation}\label{3.4}
\psi\in \mathcal{A}:=\{u\in H^1(\mathbb{R}^3), 0<\int  \lvert u\rvert^2dx\leq \int \lvert \psi \rvert^2dx, I(u)=0\},
\end{equation}
which leads to $\mathcal{A}$ is not empty.

For $u\in H^1(\mathbb{R}^3)$ and arbitrary $\varepsilon>0$ from Proposition \ref{p2.3}, one has the interpolation inequality
\begin{equation}\label{3.5}
\int\lvert u\rvert^4dx\leq\varepsilon\int\lvert u\rvert^6dx+\varepsilon^{-1}\int\lvert u\rvert^2dx.
\end{equation}
Let $u\in \mathcal{A}$. By (\ref{3.4}), $u\in H^1(\mathbb{R}^3)$ and $\int  \lvert u\rvert^2dx\leq \int \lvert \psi \rvert^2dx$.
From (\ref{1.7}) and (\ref{3.5})
\begin{equation}\label{3.6}
E(u)\geq\frac{1}{2}\int\lvert \nabla u\rvert^2dx-\frac{1}{4}\varepsilon^{-1}\int\lvert  u\rvert^2dx+(\frac{1}{6}-\frac{\varepsilon}{4})\int\lvert u\rvert^6dx.
\end{equation}
Taking $0<\varepsilon<\frac{2}{3}$, it follows that
\begin{equation}\label{3.7}
E(u)\geq-\frac{1}{4}\varepsilon^{-1}\int\lvert u\rvert^2dx\geq-\frac{1}{4}\varepsilon^{-1}\int\lvert  \psi\rvert^2dx=constant>-\infty.
\end{equation}
Therefore we have that for $u\in \mathcal{A}$
\begin{equation}\label{3.8}
H(u):=E(u)+\nu\int\lvert u\rvert^2dx\geq constant>-\infty.
\end{equation}
By (\ref{3.3}) we  deduce that $d>-\infty$.

Let $\{u_n\}_{n=1}^{\infty}\subset H^1(\mathbb{R}^3)$ be a minimizing  sequence of (\ref{3.3}). Then for all $n\in\mathbb{N}$,
\begin{equation}\label{3.9}
I(u_n)=0,\ 0<\int\lvert u_n\rvert^2dx\leq\int\psi^2dx, \ \
H(u_n)\rightarrow d,\quad n\rightarrow\infty.
\end{equation}
Thus for $n$ large enough we have that
\begin{equation}\label{3.10}
\frac{1}{2}\int\lvert \nabla u_n\rvert^2+\frac{1}{6}\lvert  u_n\rvert^6+
\nu\lvert  u_n\rvert^2dx<d+1+\frac{1}{4}\int\lvert  u_n\rvert^4dx.
\end{equation}
By (\ref{3.5}) and taking $0<\varepsilon<\frac{2}{3}$, (\ref{3.10}) implies that
\begin{equation}\label{3.11}
\frac{1}{2}\int\lvert \nabla u_n\rvert^2+
\nu\lvert  u_n\rvert^2dx<d+1+\frac{1}{4}\varepsilon^{-1}\int\lvert  \psi\rvert^2dx.
\end{equation}
Thus we deduce that $\{u_n\}^{\infty}_{n=1}$ is bounded in $H^1(\mathbb{R}^3)$.
In the following we divide two cases to proceed.

Case 1: $d\geq 0$.

Now we apply Proposition \ref{p3.1} to the minimizing sequence $\{u_n\}_{n=1}^{\infty}$. Then there exists a subsequence still denoted by $\{u_n\}_{n=1}^{\infty}$ such that
\begin{equation}\label{3.12}
u_n(x)=\sum_{j=1}^{l}U_n^j(x)+u_n^l,
\end{equation}
where $U^j_n(x):=U^j(x-x^j_n)$ and $u^l_n:=u^l_n(x)$ satisfies
\begin{equation}\label{3.13}
\lim_{l\rightarrow\infty}\limsup_{n\rightarrow\infty}\lvert\lvert u^l_n\rvert \rvert_{L^q(\mathbb{R}^3)}=0\quad with \quad  q\in [2,6].
\end{equation}
Moreover, by Proposition \ref{p3.1}, we have the following estimations as $n\rightarrow\infty$:
\begin{equation}\label{3.14}
\lvert\lvert u_n\rvert \rvert^2_{L^2}=\sum^l_{j=1}\lvert\lvert U_n^j\rvert \rvert^2_{L^2}+\lvert\lvert u_n^l\rvert \rvert^2_{L^2}+o(1),
\end{equation}
\begin{equation}\label{3.15}
\lvert\lvert \nabla u_n\rvert \rvert^2_{L^2}=\sum^l_{j=1}\lvert\lvert\nabla U_n^j\rvert \rvert^2_{L^2}+\lvert\lvert\nabla u_n^l\rvert \rvert^2_{L^2}+o(1),
\end{equation}
\begin{equation}\label{3.16}
\lvert\lvert u_n\rvert \rvert^4_{L^4(\mathbb{R}^3)}=\sum^l_{j=1}\lvert\lvert U_n^j\rvert \rvert^4_{L^4(\mathbb{R}^3)}+\lvert\lvert u_n^l\rvert \rvert^4_{L^4(\mathbb{R}^3)}+o(1),
\end{equation}
\begin{equation}\label{3.17}
\lvert\lvert u_n\rvert \rvert^6_{L^6(\mathbb{R}^3)}=\sum^l_{j=1}\lvert\lvert U_n^j\rvert \rvert^6_{L^6(\mathbb{R}^3)}+\lvert\lvert u_n^l\rvert \rvert^6_{L^6(\mathbb{R}^3)}+o(1).
\end{equation}
By (\ref{3.8}) we have that
\begin{equation}\label{3.18}
H(u_n)=\sum^l_{j=1}H(U^j_n)+H(u^l_n)+o(1), \quad as \quad n\rightarrow\infty,
\end{equation}
\begin{equation}\label{3.19}
I(u_n)=\sum^l_{j=1}I(U^j_n)+I(u^l_n)+o(1), \quad as \quad n\rightarrow\infty.
\end{equation}
By (\ref{3.9}), (\ref{3.13}) and $I(u_n)=0$, for $n$ large enough it follows that
\begin{equation}\label{3.20}
\sum^l_{j=1}H(U^j_n)\leq d.
\end{equation}
\begin{equation}\label{3.21}
\sum^l_{j=1}I(U^j_n)\leq 0.
\end{equation}
Since $d\geq0$, from (\ref{3.20}) there exists certain $U^j_n$ denoted by $U$ such that
\begin{equation}\label{3.22}
H(U)\leq d.
\end{equation}
From (\ref{3.14})
\begin{equation}\label{3.23}
0<\int\lvert U\rvert^2dx\leq\int \psi^2dx.
\end{equation}

Now we again divide  two situations to proceed.

The first situation: $U$ satisfies that
\begin{equation}\label{3.24}
\int\frac{1}{3}\lvert \nabla U\rvert^2-\frac{1}{4}\lvert  U\rvert^4dx<0.
\end{equation}
In this situation, there exists $U_\lambda=\lambda^{\frac{3}{2}}U(\lambda x)$ with certain $\lambda>0$ such that
\begin{equation}\label{3.25}
I(U_\lambda)=\int\frac{1}{3}\lambda^2\lvert \nabla U\rvert^2+\frac{1}{3}\lambda^6\lvert U\rvert^6-\frac{1}{4}\lambda^3\lvert U\rvert^4=0.
\end{equation}
\begin{equation}\label{3.26}
\int\lvert  U_\lambda\rvert^2dx=\int\lvert  U \rvert^2dx \leq \int \psi^2dx.
\end{equation}
Thus by (\ref{3.3}) we have that
\begin{equation}\label{3.27}
H(U_\lambda)\geq d.
\end{equation}
On the other hand, by Proposition \ref{3.2} and (\ref{3.22}) we have that
\begin{equation}\label{3.28}
H(U_\lambda)\leq H(U)\leq d.
\end{equation}
Therefore (\ref{3.27}) and (\ref{3.28}) imply that
\begin{equation}\label{3.29}
H(U_\lambda)= H(U)= d.
\end{equation}
Noting that (\ref{3.25}) and (\ref{3.26}), we have that (\ref{3.3}) is achieved at $U_\lambda$.

The second situation: $U$ satisfies that
\begin{equation}\label{3.30}
\int\frac{1}{3}\lvert \nabla U\rvert^2-\frac{1}{4}\lvert  U\rvert^4dx\geq0.
\end{equation}
From (\ref{3.30}) we have that
\begin{equation}\label{3.31}
E(U)\geq\int\frac{1}{6}\lvert \nabla U\rvert^2+\frac{1}{6}\lvert  U\rvert^6dx>0.
\end{equation}
Now we put
$$V=\mathop{\sum}^l_{j=1}U^j_n(x)-U,$$
that is
$$\sum^l_{j=1}U^j_n(x)=U+V.$$
By (\ref{3.20}) we have that
\begin{equation}\label{3.32}
E(U)+E(V)+\nu\int\lvert U\rvert^2dx+\nu\int\lvert V\rvert^2dx\leq d.
\end{equation}
From (\ref{3.31}) it follows that
\begin{equation}\label{3.33}
H(V)=E(V)+\nu\int\lvert V\rvert^2dx\leq d.
\end{equation}
From (\ref{3.30}) one has that $I(U)>0$. From (\ref{3.21}) one has that
$$I(U)+I(V)\leq 0.$$
It follows that
$I(V)<0$. Thus we get that $V\not\equiv 0$ and there exists $V_\lambda=\lambda^{\frac{3}{2}}V(\lambda x)$ with $\lambda\geq1$ such that $I(V_\lambda)=0$. In addition,
\begin{equation}\label{3.33a}
0<\int\lvert V_\lambda\rvert^2dx=\int\lvert V\rvert^2dx\leq\int \psi^2dx.
\end{equation}
By (\ref{3.3}) we have that
\begin{equation}\label{3.34}
H(V_\lambda)\geq d.
\end{equation}
On the other hand, by Proposition \ref{p3.2} and (\ref{3.33})
\begin{equation}\label{3.35}
H(V_\lambda)\leq H(V)\leq d.
\end{equation}
Therefore (\ref{3.34}) and (\ref{3.35}) imply that
\begin{equation}\label{3.36}
H(V_\lambda)= H(V)= d.
\end{equation}
Thus we have that (\ref{3.3}) is achieved at $V_\lambda$.

Then we conclude that (\ref{3.3}) is achieved  for Case 1.

Case 2: \quad $d<0$.

 In this case, we use Schwartz symmetric rearrangement theory \cite{LL2000} and Strauss radial compactness lemma \cite{S1977} to solve (\ref{3.3}). For a minimizing sequence $\{u_n\}^{\infty}_{n=1}$ of (\ref{3.3}), let $u^*_n$ be the Schwartz symmetrization of $u_n$ for every $n\in\mathbb{N}$. Thus $\{u^*_n\}^{\infty}_{n=1}$ is also bounded in $H^1(\mathbb{R}^3)$. In addition, for all $n\in\mathbb{N}$,
\begin{equation}\label{3.37}
0<\int\lvert u^*_n\rvert^2dx=\int\lvert u_n\rvert^2dx\leq\int\lvert \psi\rvert^2dx,
\end{equation}
\begin{equation}\label{3.38}
I(u^*_n)\leq 0, \quad H(u^*_n)\leq H(u_n).
\end{equation}
Moreover there exists a weak convergence subsequence of $\{u^*_n\}^{\infty}_{n=1}$ still denoted by $\{u^*_n\}^{\infty}_{n=1}$ such that
\begin{equation}\label{3.39}
u^*_n \rightharpoonup v \quad in \quad H^1(\mathbb{R}^3).
\end{equation}
From Strauss radial compactness Lemma \cite{S1977}, it follows that
\begin{equation}\label{3.40}
u^*_n \rightarrow v  \quad in \quad L^4(\mathbb{R}^3).
\end{equation}
Applying the weak lower semi-continuity, by (\ref{3.37}), (\ref{3.38}), (\ref{3.39}) and (\ref{3.40}), we deduce that
\begin{equation}\label{3.41}
0<\int\lvert v\rvert^2dx \leq\int\psi^2dx, \quad I(v)\leq0,\;\;\;\; H(v)\leq d.
\end{equation}
Since $d<0$, (\ref{3.41}) implies that $H(v)\leq d<0$. Therefore we have that $v\not\equiv 0$. Thus we let $v_\lambda=\lambda^{\frac{3}{2}}v(\lambda x)$ with $\lambda>0$. By $I(v)\leq 0$, there exists certain $\lambda>0$ such that
\begin{equation}\label{3.41a}
 I(v_\lambda)=0.
\end{equation}
 In terms of Proposition  \ref{p3.2}, we have that
\begin{equation}\label{3.42}
 H(v_\lambda)\leq H(v).
\end{equation}
On the other hand, since
\begin{equation}\label{3.42a}
0<\int\lvert v_\lambda\rvert^2dx=\int\lvert v\rvert^2dx\leq\int\psi^2 dx,
\end{equation}
by (\ref{3.41a}) and (\ref{3.3}), one derives that $H(v_\lambda)\geq d$. By
(\ref{3.41}) and (\ref{3.42}), it follows that
\begin{equation}\label{3.43}
 d\leq H(v_\lambda)\leq H(v)\leq d.
\end{equation}
Then we get that $H(v_\lambda)=d$. Thus (\ref{3.3}) is  achieved at $v_\lambda$.

Therefore we conclude that (\ref{3.3}) is always achieved. Obviously if $\xi$ is a minimizer of (\ref{3.3}), then $\lvert \xi(x)\rvert$ is also a minimizer of (\ref{3.3}). Thus (\ref{3.3}) always possesses the minimizer $\xi(x)\geq 0$ and $\xi(x)\not\equiv 0$.

Now we claim that
\begin{equation}\label{3.44}
\int \xi^{2}dx=\int \psi^{2}dx=m.
\end{equation}
Otherwise, by (\ref{3.3}) we have that
\begin{equation}\label{3.45}
m':=\int \xi^{2}dx<\int \psi^{2}dx=m.
\end{equation}
Then we construct the variational problem
\begin{equation}\label{3.46}
d_{m'}^{I}=\inf_{\{u\in H^{1}(\mathbb{R}^{3}), \; M(u)=m',\; I(u)=0\}}E(u).
\end{equation}
Thus $\xi$ is a solution of (\ref{3.46}). Moreover $d_{m'}^{I}=d$. On the other hand, according to Proposition \ref{p2.16}, (\ref{3.45}) yields that $d_{m'}^{I} > d_{m}^{I}$. But we always have that $d_{m}^{I}\geq d$. The above facts lead to $d=d_{m'}^{I} > d_{m}^{I}\geq d$. This is a contradiction. Therefore we deduce that (\ref{3.44}) is true. Thus $\xi$ is also a minimizer of $d_{m}^{I}$, that is
\begin{equation}\label{3.47}
\int \xi^{2}dx=\int \psi^{2}dx,\;\; I(\xi)=I(\psi)=0,\;\;E(\xi)=E(\psi).
\end{equation}
(\ref{3.47}) yields that $\psi$ is also a minimizer of the infimum $d$ defined in (\ref{3.3}). By (\ref{3.3}), there exists a Lagrange multiplier $\Lambda\in\mathbb{R}$ such that $\xi$ satisfies the Euler-Lagrange equation $d H\lvert_{\xi}+\Lambda d I\lvert_{\xi}=0$, that is
\begin{equation}\label{3.48}
-\Delta \xi-\xi^3+\xi^5+2\nu\xi+\Lambda(-2\Delta \xi-3\xi^3+6\xi^5)=0.
\end{equation}
By (\ref{3.48}), we have the following Pohozaev identities
\begin{equation}\label{3.49}
\int(1+2\Lambda)\lvert \nabla \xi\rvert^2-(1+3\Lambda)\xi^4+(1+6\Lambda) \xi^6+2\nu\xi^2dx=0,
\end{equation}
\begin{equation}\label{3.50}
\int(1+2\Lambda)\lvert \nabla \xi\rvert^2-\frac{3}{2}(1+3\Lambda)\xi^4+(1+6\Lambda) \xi^6+6\nu\xi^2dx=0.
\end{equation}
Since $I(\xi)=0$, by (\ref{3.49}) and (\ref{3.50}), we have that
\begin{equation}\label{3.51}
\Lambda\int \xi^4-\frac{16}{3}\xi^6dx=0.
\end{equation}
On the other hand, by (\ref{3.1}) we also have the Pohozaev identities
\begin{equation}\label{3.52}
\int(1+2\mu)\lvert \nabla \psi\rvert^2-(1+3\mu) \psi^4+(1+6\mu) \psi^6+2\nu \psi^2dx=0,
\end{equation}
\begin{equation}\label{3.53}
\int(1+2\mu)\lvert \nabla \psi\rvert^2-\frac{3}{2}(1+3\mu)\psi^4+(1+6\mu) \psi^6+6\nu \psi^2dx=0.
\end{equation}
Since $I(\psi)=0$, by (\ref{3.52}) and (\ref{3.53}) we also have that
\begin{equation}\label{3.54}
\mu\int \psi^4-\frac{16}{3}\psi^6dx=0.
\end{equation}

If $\Lambda=0$, by (\ref{3.47}) and (\ref{3.48}), Proposition \ref{p2.19} asserts that
\begin{equation}\label{3.200}
\xi=e^{i\theta}Q_{\omega}(x+x_{0})
\end{equation}
for some $\theta\in [0, 2\pi)$, some $x_{0}\in \mathbb{R}^{3}$ and some $0<\omega<\frac{3}{16}$. At the same time, by (\ref{3.47}), (\ref{3.200}) and Proposition \ref{p2.19}, it also yields
\begin{equation}
\psi=e^{i\theta}Q_{\omega}(x+x_{0}).
\end{equation}
Thus by (\ref{3.1}) we get that $\mu=0$. In turn,  if $\mu=0$, by the same arguments one gets that $\Lambda=0$.

If $\Lambda\neq 0$, then $\mu\neq 0$. 
By (\ref{3.51}) we have
\begin{equation}\label{3.55}
\int \xi^4-\frac{16}{3}\xi^6dx=0.
\end{equation}
From $I(\xi)=0$, it follows that
\begin{equation}\label{3.56}
\int \xi^4dx=\frac{16}{3}\int \xi^6dx=\frac{16}{9}\int \lvert \nabla \xi \lvert^{2}dx .
\end{equation}
When $\mu\neq 0$, by (\ref{3.54}) we have
\begin{equation}\label{3.57}
\int \psi^4-\frac{16}{3}\psi^6dx=0.
\end{equation}
From $I(\psi)=0$, it follows that
\begin{equation}\label{3.58}
\int \psi^4dx=\frac{16}{3}\int \psi^6dx=\frac{16}{9}\int \lvert \nabla \psi \lvert^{2}dx .
\end{equation}
Thus by (\ref{3.47}), (\ref{3.49}), (\ref{3.50}), (\ref{3.52}), (\ref{3.53}), we get that
\begin{equation}\label{3.59}
\int \xi^4dx=\frac{1+3\Lambda}{1+3\mu}\int \psi^4dx ,
\end{equation}
\begin{equation}\label{3.60}
\int \xi^6dx=\frac{1+3\Lambda}{1+3\mu}\int \psi^6dx ,
\end{equation}
\begin{equation}\label{3.61}
\int \lvert \nabla \xi \lvert^{2}dx=\frac{1+3\Lambda}{1+3\mu}\int \lvert \nabla \psi \lvert^{2}dx .
\end{equation}
Since (\ref{3.47}), one has that $E(\xi)=E(\psi)$. Thus (\ref{3.59}), (\ref{3.60}) and (\ref{3.61}) yield that
\begin{equation}
\frac{1+3\Lambda}{1+3\mu}=1.
\end{equation}
Therefore we get that $\Lambda=\mu$. Thus we conclude that one always that $\Lambda=\mu$ whatever $\Lambda=0$ or $\Lambda\neq 0$. Now under the change of variables $u(x)=av(\lambda x)$ with (\ref{3.2}) for $(u, v)=(\psi, v)$ and $(u, v)=(\xi, v)$ respectively. Both (\ref{3.1}) and (\ref{3.48}) deduce the equation (\ref{1.37}) with the same
\begin{equation}\label{3.63}
\omega=\frac{2\nu(1+6\mu)}{(1+3\mu)^{2}}=\frac{2\nu(1+6\Lambda)}{(1+3\Lambda)^{2}}.
\end{equation}
Thus (\ref{1.37}) has a unique positive solution $Q_{\omega}$. Since both $\psi(x)=aQ_{\omega}(\lambda x)$ and $\xi(x)=aQ_{\omega}(\lambda x)$ through (\ref{3.2}), we get that for some $x_{0}\in\mathbb{R}^{3}$,
\begin{equation}
\xi(x)\equiv \psi(x+x_{0}).
\end{equation}
Since $\xi(x)\geq 0$ and $\xi(x)\not\equiv 0$, we get that $v(x)\geq 0$ and $v(x)\not\equiv 0$. Using the strong maximum principle, we get that $v(x)>0$ for all $x\in\mathbb{R}^{3}$, therefore we get that both $\xi(x)$ and $\psi(x)$ are positive.

Although uniqueness of $\psi$ in (\ref{1.8}) is uncertain, $\psi$ and $\nu$ used in the above proof are determined. From the above proof, for arbitrary minimizer $\xi$ of (\ref{3.3}) such that $\xi(x)\geq0$ and $\xi(x)\not\equiv 0$, one has that $\xi(x)\equiv\psi(x)$ up to a translation. Therefore we assert that $\psi$ is the unique positive minimizer of (\ref{3.3}) up to translations.

This completes the proof of Theorem \ref{t3.3}.
\end{proof}
\begin{remark}
In \cite{Z2005,GZ2008,Z2002}, in order to prove instability of solitons, we construct some cross-constrained variational problems, but we can not solve them. In this paper, according to \cite{Z2000}, we construct a new cross-constrained variational problem (\ref{3.3}). Moreover we develop a set of intricate methods to resolve (\ref{3.3}). In proof of Theorem \ref{t3.3}, the treatment of Case 1 and the computation of Euler-Lagrange equation are especially remarkable that thanks \cite{KOPV2017}.

\end{remark}

\section{Uniqueness of the energy minimizers}\label{sec4}

We apply Theorem \ref{t3.3} to prove the following uniqueness.
\begin{theorem}\label{t4.1}
Let $m\geq \frac{4}{3\sqrt{3}}\rho$. Then the positive minimizer of variational problem (\ref{1.31}) is unique up to translations.
\end{theorem}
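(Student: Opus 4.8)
The plan is to reduce uniqueness for (\ref{1.31}) entirely to the uniqueness already furnished by Theorem \ref{t3.3} for the cross-constrained problem (\ref{3.3}): the idea is that a second positive minimizer of $d_{m}^{I}$, if it existed, would automatically be an admissible competitor for (\ref{3.3}) sitting at the optimal level, and hence must coincide with the first. First I would invoke Proposition \ref{p2.16} to know that, for $m\geq\frac{4}{3\sqrt{3}}\rho$, positive minimizers of (\ref{1.31}) exist, and suppose towards the conclusion that $\psi_{1}$ and $\psi_{2}$ are two of them, so that
\[
M(\psi_{1})=M(\psi_{2})=m,\quad I(\psi_{1})=I(\psi_{2})=0,\quad E(\psi_{1})=E(\psi_{2})=d_{m}^{I}.
\]
Each $\psi_{i}$ satisfies the Euler--Lagrange equation (\ref{3.1}) with its own Lagrange multipliers $(\mu_{i},\nu_{i})$, and as observed after (\ref{1.35}) the multiplier $\nu_{i}$ is uniquely determined by $\psi_{i}$.

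Next I would single out $\psi_{1}$ together with its multiplier $\nu_{1}$ and use precisely these data to form the cross-constrained problem (\ref{3.3}), i.e.\ take $\psi=\psi_{1}$ and $\nu=\nu_{1}$ in the definition of $d$, with the admissible set $\mathcal{A}$ of (\ref{3.4}). Theorem \ref{t3.3} then yields two facts at once: the infimum $d$ is attained, and $\psi_{1}$ is its unique positive minimizer up to translations. Moreover, the identity (\ref{3.44}) established inside the proof of Theorem \ref{t3.3} shows that every minimizer of (\ref{3.3}) saturates the mass constraint, so that the infimum value can be read off explicitly as
\[
d=H(\psi_{1})=E(\psi_{1})+\nu_{1}M(\psi_{1})=d_{m}^{I}+\nu_{1}m.
\]

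The crux is then the short comparison that places $\psi_{2}$ inside this same problem at the level $d$. Admissibility is immediate, since $I(\psi_{2})=0$ and $0<M(\psi_{2})=m=M(\psi_{1})$, so $\psi_{2}\in\mathcal{A}$. Optimality follows from
\[
H(\psi_{2})=E(\psi_{2})+\nu_{1}M(\psi_{2})=d_{m}^{I}+\nu_{1}m=d,
\]
using $E(\psi_{2})=d_{m}^{I}$ and $M(\psi_{2})=m$. Hence $\psi_{2}$ is a positive minimizer of (\ref{3.3}) for the very same choice $\psi=\psi_{1}$, $\nu=\nu_{1}$, and the uniqueness clause of Theorem \ref{t3.3} forces $\psi_{2}\equiv\psi_{1}$ up to a translation, which is exactly the asserted uniqueness for (\ref{1.31}). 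Note that the argument never requires $\nu_{1}=\nu_{2}$: only $\nu_{1}$ enters, because the cross-constrained functional $H$ is built from $\nu_{1}$.

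I expect no substantial obstacle here, since all the analytic weight has already been carried by Theorem \ref{t3.3} (the profile-decomposition and symmetric-rearrangement existence argument, and the Pohozaev computation forcing $\Lambda=\mu$). The one point demanding genuine care is the evaluation $d=d_{m}^{I}+\nu_{1}m$: this rests on the saturation of the mass constraint recorded in (\ref{3.44}), without which a competitor of strictly smaller mass could conceivably lower $H$ and invalidate the equality $H(\psi_{2})=d$ that drives the comparison. Granting that saturation, the proof is a one-line admissibility check followed by an appeal to Theorem \ref{t3.3}.
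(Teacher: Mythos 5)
Your proof is correct and takes essentially the same route as the paper: both reduce uniqueness for (\ref{1.31}) to the uniqueness clause of Theorem \ref{t3.3}, via the observation that a second positive minimizer $\psi_{2}$ is admissible for the cross-constrained problem built from $(\psi_{1},\nu_{1})$ and sits exactly at the optimal level $d=E(\psi_{1})+\nu_{1}M(\psi_{1})$. The only difference is presentational: the paper argues by contradiction, setting up the symmetric problem with $\nu_{2}$ as well and deriving the absurdity $\nu_{1}<\nu_{1}$ from the strict inequality that uniqueness forces, whereas you conclude directly that $\psi_{2}$ attains $d$ and invoke uniqueness once --- a slightly leaner packaging of the same idea.
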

\begin{proof}
Since $m\geq \frac{4}{3\sqrt{3}}\rho$, by Proposition \ref{p2.16} the  variational problem (\ref{1.31}) possesses a positive minimizer. Now suppose that $\psi_1$ and $\psi_2$ are two different positive minimizers of (\ref{1.31}) up to translations. Thus by (\ref{1.31}) one has that
\begin{equation}\label{4.1}
\int \psi_1^2dx=m=\int \psi_2^2dx,
\end{equation}
\begin{equation}\label{4.2}
E(\psi_1)=d_m=E(\psi_2),
\end{equation}
\begin{equation}\label{4.3}
I(\psi_1)=0=I(\psi_2).
\end{equation}
Moreover $\psi_{1}$ and $\psi_{2}$ satisfy the Euler-Lagrange equation (\ref{3.1}) with $(\mu, \nu)=(\mu_{1}, \nu_{1})$ and $(\mu, \nu)=(\mu_{2}, \nu_{2})$ respectively. Here $(\mu_{1}, \nu_{1})$ and $(\mu_{2}, \nu_{2})$ are the corresponding Lagrange multipliers for the minimizer $\psi_{1}$ and $\psi_{2}$ of $d_{m}^{I}$ respectively.
Thus from Theorem \ref{t3.3}, $\psi_1$ is the unique positive minimizer of the variational problem
\begin{equation}\label{4.4}
d_1:=\mathop{\inf}\limits_ {\{u\in H^1(\mathbb{R}^3),\  0<\int\lvert u\rvert^2dx\leq
\int \psi_1^2dx,\  I(u)=0\}
}
[E(u)+\nu_{1}\int\lvert u\rvert^2dx].
\end{equation}
By (\ref{4.1}) and (\ref{4.3}) one has that
\begin{equation}\label{4.5}
\psi_2\in\{u\in H^1(\mathbb{R}^3), \;\; 0<\int \lvert u\rvert^2dx\leq \int \psi_1^2dx,\;\; I(u)=0\}.
\end{equation}
From Theorem \ref{t3.3}, $\psi_1$ is unique. Therefore
\begin{equation}\label{4.6}
E(\psi_1)+\nu_1\int \psi_1^2dx<E(\psi_2)+\nu_1\int \psi_2^2dx.
\end{equation}
At the same time, $\psi_2$ is the unique positive minimizer of the variational problem
\begin{equation}\label{4.7}
d_2:=\mathop{\inf}\limits_ {\{u\in H^1(\mathbb{R}^3),\  0<\int\lvert u\rvert^2dx\leq
\int \psi_2^2dx,\  I(u)=0\}
}
[E(u)+\nu_{2}\int\lvert u\rvert^2dx].
\end{equation}
By (\ref{4.1}) and (\ref{4.3}),
\begin{equation}\label{4.8}
\psi_1\in\{u\in H^1(\mathbb{R}^3),\;\; 0<\int \lvert u\rvert^2dx\leq \int \psi_2^2dx, \;\;I(u)=0\}.
\end{equation}
Thus by the uniqueness of $\psi_2$, one implies that
\begin{equation}\label{4.9}
E(\psi_2)+\nu_2\int \psi_2^2dx<E(\psi_1)+\nu_2\int \psi_1^2dx.
\end{equation}
From (\ref{4.1}), (\ref{4.2}), (\ref{4.8}) and (\ref{4.9}), it yields that $\nu_1<\nu_1$
and $\nu_2<\nu_2$. This is impossible. Therefore one asserts that $\psi_1\equiv \psi_2$ up to a translation.

This proves Theorem \ref{t4.1}.
\end{proof}

In terms of Theorem \ref{t4.1}, we get more uniqueness results as follows.
\begin{theorem}\label{t4.13}
The positive minimizer of variational problem (\ref{1.8}) with $m\geq \rho$ is unique up to translations.
\end{theorem}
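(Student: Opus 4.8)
The plan is to reduce the uniqueness for the mass-constrained problem (\ref{1.8}) to the uniqueness for the Pohozaev-constrained problem (\ref{1.31}) that was just established in Theorem \ref{t4.1}. The key observation is that, for $m\geq\rho$, every positive minimizer of (\ref{1.8}) automatically lies in the admissible set of (\ref{1.31}) and realizes its infimum; once this identification is in place, Theorem \ref{t4.1} closes the argument at once.

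First I would fix $m\geq\rho$ and invoke Proposition \ref{p2.16}(III) to record the identity $d_{m}^{I}=d_{m}$. Next, let $\psi_{m}$ be an arbitrary positive minimizer of (\ref{1.8}); by Proposition \ref{p2.5}(II)--(III) it satisfies the scalar field equation (\ref{1.2}) with some $\omega=\omega_{\psi_m}\in(0,\frac{3}{16})$. The crucial step is then to feed this into the Pohozaev identity of Proposition \ref{p2.2}(III), which for any solution of (\ref{1.2}) reads
\begin{equation*}
\int \frac{1}{3}\lvert\nabla\psi_{m}\rvert^{2}-\frac{1}{4}\psi_{m}^{4}+\frac{1}{3}\psi_{m}^{6}\,dx=0,
\end{equation*}
that is, exactly $I(\psi_{m})=0$ in the notation of (\ref{1.30}).

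Consequently $\psi_{m}$ satisfies $M(\psi_{m})=m$, $I(\psi_{m})=0$, and $E(\psi_{m})=d_{m}=d_{m}^{I}$, so $\psi_{m}$ is admissible for (\ref{1.31}) and attains its infimum; that is, $\psi_{m}$ is a positive minimizer of (\ref{1.31}). Since $m\geq\rho\geq\frac{4}{3\sqrt{3}}\rho$, Theorem \ref{t4.1} guarantees that the positive minimizer of (\ref{1.31}) is unique up to translations. Therefore every positive minimizer of (\ref{1.8}) coincides with this single function up to a translation, which is precisely the claimed uniqueness.

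I expect the only genuinely delicate point to be the reduction itself: verifying that a minimizer of the looser, mass-only problem sits inside the admissible class of the tighter problem with matching energy level. This hinges on combining two facts carefully --- the Euler--Lagrange equation (\ref{1.2}) forces the Pohozaev relation $I=0$, and the equality $d_{m}^{I}=d_{m}$ (valid only for $m\geq\rho$) aligns the two infima. Outside this mass range the argument would break down, since for $\frac{4}{3\sqrt{3}}\rho\leq m<\rho$ one has $d_{m}^{I}>0=d_{m}$ by Propositions \ref{p2.16} and \ref{p2.5}, so the hypothesis $m\geq\rho$ is used in an essential way.
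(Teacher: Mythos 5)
Your proof is correct and takes essentially the same route as the paper's: both pass from the Euler--Lagrange equation (\ref{1.2}) via the Pohozaev identity of Proposition \ref{p2.2}(III) to $I(\psi_m)=0$, identify $\psi_m$ as a positive minimizer of (\ref{1.31}), and then invoke Theorem \ref{t4.1}. Your write-up is in fact slightly more explicit than the paper's, which leaves the energy-level identification $E(\psi_m)=d_m=d_m^I$ (via Proposition \ref{p2.16}(III)) implicit.
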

\begin{proof}
 Let $\psi_{m}$ ba a positive minimizer of (\ref{1.8}). Then $\psi_{m}$ satisfies (\ref{1.2}) with some $\omega\in (0, \frac{3}{16})$. Thus $I(\psi_{m})=0$. Therefore $\psi_{m}$ is a positive minimizer of the variational problem
\begin{equation*}
d_{m}^{I}=\inf_{\{u\in H^{1}(\mathbb{R}^{3}), \; M(u)=m,\; I(u)=0\}}E(u).
\end{equation*}
 By Theorem \ref{t4.1}, $d_{m}^{I}$ possesses unique positive minimizer. Then $\psi_{m}$ is unique.

This proves Theorem \ref{t4.13}
\end{proof}

\begin{theorem}\label{t4.2}
The positive minimizer of variational problem (\ref{1.5}) is unique up to translations.
\end{theorem}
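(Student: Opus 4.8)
The plan is to reduce uniqueness for the Weinstein functional problem (\ref{1.5}) to the already-established uniqueness for the mass-constrained problem (\ref{1.8}) at the critical mass $m=\rho$, exploiting the sharp information supplied by Proposition \ref{p2.3}.

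First I would recall the precise characterization of positive minimizers of (\ref{1.5}). By Proposition \ref{p2.3}, every positive minimizer $\phi$ of (\ref{1.5}) satisfies
$$M(\phi)=\lvert\lvert \phi\rvert \rvert_{L^2}^2=\rho,\qquad E(\phi)=0.$$
Thus each such $\phi$ lies on the mass surface $M(u)=\rho$ and has vanishing energy. This is the crucial output of the scale-invariant formulation: although the functional $F$ carries no a priori mass constraint, its minimizers are forced onto a single mass level with a single energy value.

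Next I would invoke Proposition \ref{p2.5}(II): at $m=\rho$ one has $d_\rho=0$ together with $E(u)\geq 0$ for every $u$ with $M(u)=\rho$. Comparing with the previous step, $\phi$ attains $E(\phi)=0=d_\rho$ on the constraint $M(\phi)=\rho$. Hence every positive minimizer $\phi$ of (\ref{1.5}) is simultaneously a positive minimizer of the mass-constrained variational problem (\ref{1.8}) with $m=\rho$. Since $\tfrac{4}{3\sqrt{3}}<1$, we have $\rho\geq \tfrac{4}{3\sqrt{3}}\rho$, so $m=\rho$ lies in the range covered by Theorem \ref{t4.13}.

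Finally I would apply Theorem \ref{t4.13} at $m=\rho$, which asserts that the positive minimizer of (\ref{1.8}) is unique up to translations. Consequently, if $\phi_1$ and $\phi_2$ are two positive minimizers of (\ref{1.5}), both are positive minimizers of $d_\rho$, and therefore $\phi_1\equiv \phi_2$ up to a translation. There is essentially no remaining analytic obstacle, since the substantive work has already been carried out in Theorem \ref{t4.13} (through Theorem \ref{t4.1} and Theorem \ref{t3.3}); the only point requiring attention is the identification step above, namely verifying that the mass $\rho=\tfrac{64}{9}d_0^2$ and zero energy recorded in Proposition \ref{p2.3} place every minimizer of $F$ exactly on the zero-energy level of the mass-$\rho$ problem.
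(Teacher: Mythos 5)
Your proof is correct and takes essentially the same route as the paper's own proof: both use Proposition \ref{p2.3} to place every positive minimizer of (\ref{1.5}) at mass $\rho$ with zero energy, identify it via Proposition \ref{p2.5} as a positive minimizer of the mass-constrained problem (\ref{1.8}) at $m=\rho$, and then conclude by the uniqueness established in Theorem \ref{t4.13}. Your explicit check that $\rho\geq\tfrac{4}{3\sqrt{3}}\rho$ places $m=\rho$ in the admissible range is a small but welcome addition that the paper leaves implicit.
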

\begin{proof}
By Proposition \ref{p2.3}, the variational problem (\ref{1.5}) is achieved at some positive minimizer $\phi$. Now let $\phi$ be arbitrary positive minimizer of (\ref{1.5}). Then $\phi$
satisfies
\begin{equation}\label{4.9a}
\int \phi^2dx=\frac{64}{9}d_0^2=\rho,\;\;\;\;  E(\phi)=0.
\end{equation}
From Proposition \ref{p2.5},
$\phi$ is a  positive minimizer of the variational problem
\begin{equation}\label{4.10}
d_\rho=\mathop{\inf}\limits_ {\{u\in H^1(\mathbb{R}^3),\  \int\lvert u\rvert^2dx=\rho\}
}
E(u).
\end{equation}
In terms of Proposition \ref{p2.16} and Theorem \ref{t4.13}, the positive minimizer $\psi_\rho$ of (\ref{4.10}) is unique up to translations. Therefore $\phi\equiv\psi_\rho$  up to a translation.
Thus we conclude that $\phi$ is unique up to translations.

This proves Theorem \ref{t4.2}.
\end{proof}

\begin{theorem}\label{t4.3}
Let $m\geq \frac{4}{3\sqrt{3}}\rho$ and $\psi$ be the unique positive minimizer of (\ref{1.31}) up to translations.
Then the set of all minimizers of (\ref{1.31}) is that
$$S_m=\{e^{i\theta}\psi(\cdot+y), \; \theta\in\mathbb{R},\; y\in\mathbb{R}^3\}.$$
\end{theorem}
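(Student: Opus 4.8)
The plan is to prove the two inclusions that together give $S_m=\{\text{minimizers of }(\ref{1.31})\}$. The inclusion $S_m\subseteq\{\text{minimizers}\}$ is immediate from invariance: the functionals $M$, $E$ and $I$ depend on $u$ only through $|u|$ and are invariant under spatial translation, so for every $\theta\in\mathbb{R}$ and $y\in\mathbb{R}^3$ the function $e^{i\theta}\psi(\cdot+y)$ satisfies $M=m$, $I=0$ and $E=d_m^I$, hence is a minimizer. The content is therefore the reverse inclusion.

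For the reverse inclusion I would take an arbitrary minimizer $u$, so that $M(u)=m$, $I(u)=0$ and $E(u)=d_m^I$, and first pass to its modulus $|u|$. By the diamagnetic inequality $|\nabla|u||\leq|\nabla u|$ a.e., together with the fact that $\int|u|^4$, $\int|u|^6$ and $M$ are unchanged under $u\mapsto|u|$, one gets $M(|u|)=m$, $I(|u|)\leq I(u)=0$ and $E(|u|)\leq E(u)=d_m^I$. The next step is to upgrade $I(|u|)\leq 0$ to $I(|u|)=0$. Suppose $I(|u|)<0$. Using the mass-preserving scaling $|u|_\lambda$ of Proposition \ref{p3.2} and the relation $\frac{d}{d\lambda}E(|u|_\lambda)=\frac{3}{\lambda}I(|u|_\lambda)$, let $\lambda_0>1$ be the first scaling parameter with $I(|u|_{\lambda_0})=0$; then $I(|u|_\lambda)<0$ on $[1,\lambda_0)$, so $E(|u|_{\lambda_0})<E(|u|)\leq d_m^I$. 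But $|u|_{\lambda_0}$ is admissible for $(\ref{1.31})$, forcing $E(|u|_{\lambda_0})\geq d_m^I$, a contradiction. Hence $I(|u|)=0$, and then the definition of the infimum gives $E(|u|)\geq d_m^I$, so $E(|u|)=d_m^I=E(u)$. In particular the diamagnetic inequality is saturated, $\int|\nabla|u||^2=\int|\nabla u|^2$, and $|u|$ is itself a minimizer of $(\ref{1.31})$.

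To finish, I would extract positivity and a constant phase. The nonnegative minimizer $|u|$ satisfies the Euler--Lagrange equation $(\ref{3.1})$ with appropriate Lagrange multipliers, and since $M(|u|)=m>0$ gives $|u|\not\equiv 0$, the strong maximum principle—exactly as at the end of the proof of Theorem \ref{t3.3}—yields $|u|>0$ on $\mathbb{R}^3$. Theorem \ref{t4.1} then gives $|u|=\psi(\cdot+y)$ for some $y\in\mathbb{R}^3$. Finally, saturation of the diamagnetic inequality together with $|u|>0$ everywhere forces a constant phase: writing $u=e^{i\theta(x)}|u|$ on the connected set $\{|u|>0\}=\mathbb{R}^3$, the equality $|\nabla|u||=|\nabla u|$ gives $|u|^2|\nabla\theta|^2=0$, so $\nabla\theta\equiv 0$ and $\theta$ is a constant. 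Therefore $u=e^{i\theta}\psi(\cdot+y)\in S_m$.

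I expect the main obstacle to be the equality case of the diamagnetic inequality and the deduction of a globally constant phase—this is where positivity of $|u|$ (via the maximum principle) is essential, since otherwise the phase could only be shown locally constant on the components of $\{|u|>0\}$. The rescaling argument establishing $I(|u|)=0$ is routine given Proposition \ref{p3.2}, and the identification $|u|=\psi(\cdot+y)$ is a direct appeal to Theorem \ref{t4.1}.
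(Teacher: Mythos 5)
Your proposal is correct, but it takes a genuinely different route from the paper's proof. The paper argues componentwise and spectrally: writing an arbitrary minimizer as $\eta=\psi^{1}+i\psi^{2}$, it asserts that $\tilde{\eta}=\lvert\psi^{1}\rvert+i\lvert\psi^{2}\rvert$ is again a minimizer, puts both into the Euler--Lagrange equation (\ref{3.1}), rescales by (\ref{3.2}), and observes that $V^{1}$, $V^{2}$, $\lvert V^{1}\rvert$, $\lvert V^{2}\rvert$ all solve the linear equation $\Delta V+\lvert U\rvert^{2}V-\lvert U\rvert^{4}V=\omega V$; since a nonnegative eigenfunction must correspond to the first eigenvalue and that eigenvalue is simple, every component is a real multiple of a single positive eigenfunction $v_{0}$, whence $U=e^{i\theta}v_{0}$, and Theorem \ref{t4.1} concludes. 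You instead keep the complex minimizer intact: the diamagnetic inequality passes to $\lvert u\rvert$, the scaling of Proposition \ref{p3.2} restores the constraint $I=0$ (a strict energy decrease ruling out $I(\lvert u\rvert)<0$), the strong maximum principle plus Theorem \ref{t4.1} identify $\lvert u\rvert=\psi(\cdot+y)$, and saturation of the diamagnetic inequality together with $\lvert u\rvert>0$ forces the phase to be constant. Your route buys two things: it avoids the spectral input entirely (simplicity of the first eigenvalue, and the paper's tacit assumption that $\eta$ and $\tilde{\eta}$ carry the same Lagrange multipliers), and your scaling step supplies exactly the justification the paper omits when it asserts without proof that $\tilde{\eta}$ is still a minimizer of (\ref{1.31}) --- a priori one only gets $I(\tilde{\eta})\leq 0$ from the diamagnetic inequality, and restoring $I=0$ requires precisely your rescaling-and-strict-decrease argument. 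The paper's route, conversely, never needs the equality case of the diamagnetic inequality or a globally defined phase, the point you rightly flag as delicate; in your argument it is harmless because $u$ solves the elliptic equation (\ref{3.1}), hence is continuous, $\lvert u\rvert>0$ everywhere, and $\mathbb{R}^{3}$ is connected and simply connected. Both proofs rest in the end on Theorem \ref{t4.1} and the maximum principle.
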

\begin{proof}
By Proposition \ref{p2.16} and Theorem \ref{t4.1}, for $m\geq \frac{4}{3\sqrt{3}}\rho$, the variational problem
(\ref{1.31}) has a unique positive minimizer $\psi(x)$ up to a translation. Now assume that $\eta\in H^1(\mathbb{R}^3)$ is an arbitrary solution of (\ref{1.31}). Let $\eta=\psi^{1}+i\psi^{2}$, where $\psi^{1}, \ \psi^{2}\in H^1(\mathbb{R}^3)$ are real valued. Then $\tilde{\eta}=\lvert \psi^{1}\rvert+i\lvert \psi^{2}\rvert$
is still a solution of (\ref{1.31}). Thus there exist the Lagrange  multipliers $\mu\geq 0$, $\nu>0$ such that $\eta$ and $\tilde{\eta}$ satisfy (\ref{3.1}). Then for $j=1,2$,
\begin{equation}\label{4.11}
\begin{aligned}
-\Delta \psi^{j}&+\rvert \eta \rvert^{4}\psi^{j}-\rvert \eta \rvert^{2}\psi^{j}+\mu(-2\Delta \psi^{j}+6\rvert \eta \rvert^{4}\psi^{j}\\
&-3\rvert \eta \rvert^{2}\psi^{j})+2\nu \psi^{j}=0,
\end{aligned}
\end{equation}
\begin{equation}\label{4.12}
\begin{aligned}
-\Delta \rvert \psi^{j} \rvert & +\rvert \eta \rvert^{4}\rvert \psi^{j} \rvert -\rvert \eta \rvert^{2}\rvert \psi^{j}\rvert+\mu(-2\Delta \rvert \psi^{j} \rvert+6\rvert \eta \rvert^{4}\rvert \psi^{j} \rvert \\
&-3\rvert \eta \rvert^{2}\rvert \psi^{j}\rvert)+2\nu \rvert \psi^{j}\rvert=0.
\end{aligned}
\end{equation}
Under the change of variables $u(x)=av(\lambda x)$ with (\ref{3.2}) for $(u,  v)=(\psi^{j},  V^{j})$ and $(u,  v)=(\rvert \psi^{j}\rvert,  \rvert V^{j}\rvert)(j=1,2)$ respectively,  by (\ref{4.11}) and (\ref{4.12}),  one yields that
for $j=1,2$,
\begin{equation}\label{4.13}
\Delta V^j+\lvert U\rvert^2V^j-\lvert U\rvert^4V^j=\omega V^j \quad in \quad \mathbb{R}^3,
\end{equation}
\begin{equation}\label{4.14}
\Delta \lvert V^j\rvert+\lvert U\rvert^2\lvert V^j\rvert-\lvert U\rvert^4\lvert V^j\rvert=\omega \lvert V^j\rvert \quad in \quad \mathbb{R}^3,
\end{equation}
where $U=V^{1}+i V^{2}$. This shows that $\omega$ is the first eigenvalue of the operator $\Delta+\lvert U\rvert^2-\lvert U\rvert^4$ acting over $H^1(\mathbb{R}^3)$ and thus $V^1,\; V^2,\; \lvert V^1\rvert$ and $\lvert V^2\rvert$ are all multipliers of a positive normalized eigenfunction $v_0$ of $\Delta+\lvert U\rvert^2-\lvert U\rvert^4$, i.e.
\begin{equation}\label{4.15}
\Delta v_0+\lvert U\rvert^2v_0-\lvert U\rvert^4v_0=\omega v_0 \quad in \quad \mathbb{R}^3,
\end{equation}
with
\begin{equation}\label{4.16}
v_0\in C^2(\mathbb{R}^3)\cap H^1(\mathbb{R}^3), \ v_0>0 \ in \ \mathbb{R}^3.
\end{equation}
It is now obvious to deduce that $U=e^{i\theta}v_0$ for some $\theta\in\mathbb{R}$ and that $v_0$ is still a solution of (\ref{1.37}). Now we conduct the change of variables $u(x)=av(\lambda x)$ with (\ref{3.2}) for $(u, v)=(\xi, v_{0})$. Then $\xi$ satisfies (\ref{3.1}) with the same $\mu$ and $\nu$. Thus by Theorem \ref{t4.1},  we have that $\eta=e^{i\theta}\xi$ and $\xi=\psi(\cdot+y)$ for some $y\in \mathbb{R}^{3}$. Therefore one gets that $\eta=e^{i\theta}\psi(\cdot+y)$ for some $\theta\in\mathbb{R}$ and $y\in \mathbb{R}^{3}$.
It is obvious that for any $\theta\in\mathbb{R}$ and $y\in\mathbb{R}^3$, \; $e^{i\theta}\psi(\cdot+y)$ is also a solution of (\ref{1.31}). Therefore
\begin{equation}\label{4.17}
S_m=\{e^{i\theta}\psi(\cdot+y), \ \theta\in\mathbb{R},\ y\in\mathbb{R}^3\}
\end{equation}
is the set of all solutions of (\ref{1.31}).

This completes the proof of Theorem \ref{t4.3}.
\end{proof}

\begin{theorem}\label{t4.14}
Let $M(Q_{\omega_{1}})=m_{1}$ for some $\omega_{1}\in (0, \frac{3}{16})$. Denote $M(R_{\omega_{1}})=m$. Then the variational problem
\begin{equation*}
d_{m}^{I}:=\inf_{\{u\in H^{1}(\mathbb{R}^{3}), \;  M(u)=m, \; I(u)=0\}}E(u)
\end{equation*}
possesses a unique positive minimizer $\psi$ up to a translation. Let $Q_{\omega}$ be the unique rescaling function by $\psi(x)=aQ_{\omega}(\lambda x)$ with (\ref{3.2}). Then $Q_{\omega}$ satisfies (\ref{1.2}) with $\omega\in(0, \frac{3}{16})$ determined uniquely by $m$. Moreover $M(Q_{\omega})=m_{1}$.
\end{theorem}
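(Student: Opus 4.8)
The plan is to reduce the whole statement to the uniqueness already proved in Theorem \ref{t4.1} together with the classification of minimizers in Proposition \ref{p2.19}, the only genuinely new content being the identification of the minimizer with the rescaled soliton $R_{\omega_1}$ itself.

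First I would dispose of existence and uniqueness. By Proposition \ref{p2.18} (equivalently Lemma \ref{p2.17}) the rescaled soliton satisfies $I(R_{\omega_1})=0$ and $\tfrac{4}{3\sqrt3}\rho\le m=M(R_{\omega_1})\le M(Q_{\omega_1})=m_1$, so $m\ge\tfrac{4}{3\sqrt3}\rho$. Proposition \ref{p2.16} then makes $d_m^{I}$ finite and attained, and Theorem \ref{t4.1} makes the positive minimizer $\psi$ unique up to translation. This $\psi$ solves the Euler--Lagrange equation (\ref{3.1}) with multipliers $\mu\ge0$, $\nu>0$, and the substitution (\ref{3.2}) writes $\psi(x)=aQ_\omega(\lambda x)$ with $Q_\omega$ a solution of (\ref{1.2}) for some $\omega\in(0,\tfrac{3}{16})$; this is the $\omega$ of the statement. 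Being positive and, after the rescaling, a translate of a radial function, $\psi$ falls under Proposition \ref{p2.19}, so up to translation $\psi=R_{\omega'}$ or $\psi=Q_{\omega'}$ with $\beta(\omega')>\tfrac13$.

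The heart of the argument is to prove $\psi=R_{\omega_1}$ up to translation. I would exploit the scaling rigidity of the non-scale-invariant equation (\ref{1.2}): if $Q_{\omega'}(x)=AQ_\omega(Bx)$ for constants $A,B>0$, then matching the cubic and quintic terms forces $A=B=1$ and $\omega'=\omega$. Since $\psi$ rescales to $Q_\omega$ while being at the same time $R_{\omega'}$ or $Q_{\omega'}$ (each of which is a rescaling of $Q_{\omega'}$), this rigidity identifies the Proposition \ref{p2.19} frequency with the rescaled frequency, $\omega'=\omega$. Now $R_{\omega_1}$ is itself an admissible competitor for $d_m^{I}$, whence $E(R_{\omega_1})\ge d_m^{I}$; the task is the reverse inequality, i.e. that $R_{\omega_1}$ attains $d_m^{I}$. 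Granting this, the uniqueness in Theorem \ref{t4.1} forces $\psi=R_{\omega_1}$ up to translation; applying the rigidity once more (by Proposition \ref{p2.18} $R_{\omega_1}$ is an explicit rescaling of $Q_{\omega_1}$, while $\psi=R_{\omega_1}=aQ_\omega(\lambda\,\cdot)$ rescales to $Q_\omega$) gives $\omega=\omega_1$, and therefore $M(Q_\omega)=M(Q_{\omega_1})=m_1$. The uniqueness of $\omega$ as a function of $m$ is then inherited from the uniqueness of $\psi$ and this rigidity.

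I expect the reverse energy inequality $E(R_{\omega_1})\le d_m^{I}$ to be the genuine obstacle. It amounts to showing that the unique minimizer of mass $m$ is the rescaled soliton carrying \emph{the same} frequency $\omega_1$, rather than a genuine soliton $Q_{\omega''}$ or a rescaled soliton $R_{\omega''}$ of a different frequency that happens to share the mass $m$. This cannot be read off from the mass alone, since $\omega\mapsto M(R_\omega)$ need not be injective---indeed that very non-injectivity is what later produces the two-to-one correspondence of Theorem \ref{t1.4}. I would therefore attack it through the variational characterization in Proposition \ref{p2.19} combined with the mass identity $M(R_\omega)=\tfrac{16\sqrt{3\beta(\omega)}}{9(1+\beta(\omega))^{2}}M(Q_\omega)$ of Proposition \ref{p2.18}, using the energy ordering of the rescaled solitons at fixed mass to exclude every competitor of frequency $\ne\omega_1$ and, in particular, to rule out the genuine-soliton branch $\psi=Q_\omega$ whenever $\beta(\omega_1)\ne\tfrac13$.
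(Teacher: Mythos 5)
Your opening and closing moves coincide with the paper's: Lemma \ref{p2.17} gives $m=M(R_{\omega_1})\ge\tfrac{4}{3\sqrt{3}}\rho$, Proposition \ref{p2.16} plus Theorem \ref{t4.1} give the unique positive minimizer $\psi$, the change of variables (\ref{3.2}) produces $Q_\omega$, and the mass bookkeeping $M(Q_\omega)=\tfrac{\lambda^3}{a^2}M(\psi)$ is the paper's final computation. The gap is exactly the step you flag yourself and then do not supply: the ``reverse energy inequality'' $E(R_{\omega_1})\le d_m^{I}$, i.e.\ that $R_{\omega_1}$ attains $d_m^{I}$. Without it you have no link between the minimizer's frequency and $\omega_1$, hence no conclusion $M(Q_\omega)=m_1$. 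Worse, the route you sketch cannot close this gap in general, because your target $\psi=R_{\omega_1}$ (equivalently $\omega=\omega_1$) is incompatible with Proposition \ref{p2.19} whenever $\beta(\omega_1)\le\tfrac13$, and this regime is nonempty: writing $Q_\omega=\sqrt{\omega}\,P_\omega(\sqrt{\omega}\,\cdot)$, the profile $P_\omega$ converges to the cubic ground state as $\omega\to0$, so $\beta(\omega)\to0$. For such $\omega_1$, Proposition \ref{p2.19} together with the scaling rigidity you correctly establish forces the minimizer to be $R_{\omega'}$ or $Q_{\omega'}$ with $\omega'\neq\omega_1$; the theorem's conclusion would then have to hold with $\omega\neq\omega_1$, so a strategy whose whole aim is to ``exclude every competitor of frequency $\ne\omega_1$'' is structurally the wrong mechanism there.

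For comparison, the paper bridges this identical step by a single unjustified assertion, ``Since $\beta(\omega_1)=\frac{1+6\mu}{3(1+2\mu)}$,'' where $\mu$ is the Lagrange multiplier of $\psi$. Matching the amplitude $a^2=\frac{1+3\mu}{1+6\mu}$ from (\ref{3.2}) against the amplitude $\frac{1+\beta}{4\beta}$ of a rescaled soliton in Proposition \ref{p2.18} shows that this identity is precisely equivalent to your unproven claim (it says $\psi$ is the rescaled soliton of the frequency $\omega_1$, resp.\ $\beta(\omega)=\beta(\omega_1)$). What the Pohozaev computations in the proof of Theorem \ref{t3.3} (cf.\ (\ref{3.54})--(\ref{3.58})) actually deliver, when $\mu\neq0$, is $\beta(\omega)=\frac{1+6\mu}{3(1+2\mu)}$ with $\omega$ the rescaled frequency of $\psi$ itself; that yields only $M(R_\omega)=m=M(R_{\omega_1})$, i.e.\ $\psi=R_\omega$, and says nothing about $\omega_1$ unless one already knows $\omega\mapsto M(R_\omega)$ is injective along the minimizing branch --- information of monotonicity type which the paper only obtains later (Sections 7--9), building on this very theorem. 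So your diagnosis of where the real content sits is exactly right, and sharper than the paper's write-up; but neither your proposal nor the identity you would need to borrow from the paper actually proves it, and your specific plan of attack fails outright in the region $\beta(\omega_1)\le\tfrac13$.
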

\begin{proof}
By Lemma \ref{p2.17}, we have that $m=M(R_{\omega_{1}})\geq\frac{4}{3\sqrt{3}}\rho$.
By Theorem \ref{t4.1}, one concludes that $d_{m}^{I}$ is achieved at a unique positive minimizer $\psi$ up to a translation. Thus $\psi$ satisfies the Euler-Lagrange equation (\ref{3.1}) and
\begin{equation}
M(\psi)=m=M(R_{\omega_{1}})=\frac{16\sqrt{3\beta(\omega_{1})}}{9[1+\beta(\omega_{1})]^{2}}m_{1}.
\end{equation}
Under the change of variables $\psi=av(\lambda x)$ with (\ref{3.2}), thus the Euler-lagrange equation (\ref{3.2}) becomes (\ref{1.37}). Therefore one concludes that $v=Q_{\omega}$ for some $\omega\in (0, \frac{3}{16})$, which is determined by $m$. Since $\beta(\omega_{1})=\frac{1+6\mu}{3(1+2\mu)}$, it follows that
\begin{equation*}
\begin{aligned}
M(Q_{\omega})=&\frac{\lambda^{3}}{a^{2}}M(\psi)=\frac{(1+3\mu)\sqrt{1+6\mu}}{(1+2\mu)^{\frac{3}{2}}}m\\
=&\frac{(1+3\mu)\sqrt{1+6\mu}}{(1+2\mu)^{\frac{3}{2}}}\cdot\frac{16\sqrt{3\frac{1+6\mu}{3(1+2\mu)}}}{9[1+\frac{1+6\mu}{3(1+2\mu)}]^{2}}m_{1}=m_{1}.
\end{aligned}
\end{equation*}

This proves Theorem \ref{t4.14}.
\end{proof}


\begin{theorem}\label{t4.25}
For $m_{0}$ defined by (\ref{1.120}),  there exists unique $\omega_{\ast}\in (0, \frac{3}{16})$ such that $M(Q_{\omega_{\ast}})=m_{0}$.
\end{theorem}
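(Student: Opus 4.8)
The plan is to separate the statement into an existence part and a uniqueness part, the latter being where the real work lies. For existence I would appeal directly to Proposition \ref{p2.2a}: since $M(Q_\omega)\to\infty$ both as $\omega\to 0^{+}$ and as $\omega\to(3/16)^{-}$, while $\omega\mapsto M(Q_\omega)$ is continuous (indeed real-analytic) on the open interval $(0,\tfrac{3}{16})$, the infimum $m_{0}=\inf_{0<\omega<3/16}M(Q_\omega)$ cannot be approached along either endpoint and must therefore be attained at some interior $\omega_{*}\in(0,\tfrac{3}{16})$. The two-sided bound $\frac{4}{3\sqrt3}\rho\le m_{0}\le\rho$ is already recorded in Lemma \ref{p2.17}, so nothing new is needed there; everything hinges on showing the minimizing frequency is unique.

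For uniqueness I would argue by contradiction, assuming $\omega_{1}\ne\omega_{2}$ with $M(Q_{\omega_{1}})=M(Q_{\omega_{2}})=m_{0}$, and try to convert each mass-minimizer into an \emph{energy} minimizer so as to invoke the uniqueness of Theorem \ref{t4.1} (equivalently the classification of Theorem \ref{t4.3}). By the Pohozaev identity of Proposition \ref{p2.2}(III) each $Q_{\omega_{i}}$ satisfies $I(Q_{\omega_{i}})=0$, and the rescaled soliton $R_{\omega_{i}}$ of Proposition \ref{p2.18} obeys $I(R_{\omega_{i}})=0$ together with $M(R_{\omega_{i}})=\frac{16\sqrt{3\beta(\omega_{i})}}{9[1+\beta(\omega_{i})]^{2}}\,m_{0}$. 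Theorem \ref{t4.14} then identifies $R_{\omega_{i}}$ with the unique positive minimizer of $d^{I}_{M(R_{\omega_{i}})}$, whose rescaling by \eqref{3.2} recovers $Q_{\omega_{i}}$ with $Q$-mass exactly $m_{0}$; by Proposition \ref{p2.19} I may, after passing to the frequency furnished by Theorem \ref{t4.14}, assume the relevant minimizers satisfy $\beta(\omega_{i})>\tfrac13$.

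The favorable case is $\beta(\omega_{1})=\beta(\omega_{2})$. Then the two rescaled solitons share the same mass $m:=M(R_{\omega_{1}})=M(R_{\omega_{2}})$, so $R_{\omega_{1}}$ and $R_{\omega_{2}}$ are both positive minimizers of $d^{I}_{m}$; Theorem \ref{t4.1} forces them to coincide up to translation, and undoing the common change of variables \eqref{3.2} yields $Q_{\omega_{1}}=Q_{\omega_{2}}$ up to translation, whence $\omega_{1}=\omega_{2}$ since substituting a common profile into \eqref{1.2} shows distinct frequencies give distinct ground states. This disposes of the bulk of the argument and is purely bookkeeping on Theorems \ref{t4.1} and \ref{t4.14}.

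The genuine obstacle is the case $\beta(\omega_{1})\ne\beta(\omega_{2})$: here the rescaled solitons carry \emph{different} masses $m_{1}\ne m_{2}$, yet both have energy-ground-state $Q$-mass equal to the minimal value $m_{0}$, so the uniqueness of the energy minimizer at a single fixed mass no longer applies. To attack this I would use $I(R_\omega)=0$ to obtain the closed form $E(R_\omega)=\frac{\sqrt3}{9}\,\frac{\omega\,M(Q_\omega)}{\sqrt{\beta(\omega)}\,[\beta(\omega)+1]}$ (so $E(R_{\omega_i})=\frac{\sqrt3}{9}\,\frac{\omega_i m_0}{\sqrt{\beta_i}(\beta_i+1)}$), note that $d^{I}_{m_{i}}=E(R_{\omega_{i}})$, and then play off the strict monotonicity and lower semicontinuity of $m\mapsto d^{I}_{m}$ from Proposition \ref{p2.16}(IV) against the fact that $\beta\mapsto\frac{16\sqrt{3\beta}}{9(1+\beta)^{2}}$ is strictly decreasing on $\beta>\tfrac13$. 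I expect the monotone comparison alone to be too weak to close this case directly, and the decisive extra ingredient to be the real-analyticity of $M(Q_\omega)$ (which makes the set of critical points of $M(Q_\cdot)$, hence of global minimizers, discrete) combined with the involutive ``partner'' correspondence of Theorem \ref{t4.14} that pairs any $\beta<\tfrac13$ minimizer with a distinct $\beta>\tfrac13$ one at the same $M(R)$. Reconciling these two finite families into a single point is, I anticipate, the crux of the proof, with the preceding steps serving only to reduce to it.
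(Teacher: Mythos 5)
Your existence step is exactly the paper's: by Proposition \ref{p2.2a} the map $\omega\mapsto M(Q_\omega)$ is continuous (real-analytic) and diverges at both endpoints of $(0,\tfrac{3}{16})$, so the infimum $m_0$ in (\ref{1.120}) is attained at an interior point, and the bounds $\frac{4}{3\sqrt3}\rho\le m_0\le\rho$ are Lemma \ref{p2.17}. The uniqueness half, however, is where your proposal genuinely fails, in two distinct places. The decisive one is your case $\beta(\omega_1)\ne\beta(\omega_2)$: you never close it. You derive the (correct) formula $E(R_\omega)=\frac{\sqrt3}{9}\frac{\omega M(Q_\omega)}{\sqrt{\beta(\omega)}\,[\beta(\omega)+1]}$ and then say you \emph{expect} monotone comparison to be too weak and \emph{anticipate} that analyticity plus a ``partner'' correspondence will reconcile the two families. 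That is a research plan, not an argument; since you yourself identify this case as the crux, the theorem is simply not proved.

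The case you do treat, $\beta(\omega_1)=\beta(\omega_2)$, also leans on a claim the paper never establishes: that $R_{\omega_i}$ \emph{is} the positive minimizer of $d^{I}_{M(R_{\omega_i})}$ (equivalently, that $d^{I}_{m_i}=E(R_{\omega_i})$). Theorem \ref{t4.14} does not ``identify'' $R_{\omega_1}$ with that minimizer; it says only that the unique minimizer $\psi$ of $d^{I}_{m}$, $m=M(R_{\omega_1})$, rescales via (\ref{3.2}) to \emph{some} ground state $Q_\omega$ whose mass equals $M(Q_{\omega_1})$ --- not that $\psi=R_{\omega_1}$, nor that $Q_\omega=Q_{\omega_1}$. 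Likewise Proposition \ref{p2.19} runs in one direction only: radial minimizers have the form $R_\omega$ or $Q_\omega$ for some $\omega$; it does not say every rescaled soliton is a minimizer. So you cannot invoke Theorem \ref{t4.1} to force $R_{\omega_1}\equiv R_{\omega_2}$. For comparison, the paper's proof involves no dichotomy on $\beta$ and no contradiction argument: it takes a minimizing frequency $\omega_0$, sets $m=M(R_{\omega_0})\ge\frac{4}{3\sqrt3}\rho$ (Lemma \ref{p2.17}), lets $\psi$ be the unique positive minimizer of $d^{I}_{m}$ (Proposition \ref{p2.16} and Theorem \ref{t4.1}), rescales $\psi$ by (\ref{3.2}) to a uniquely determined ground state $Q_{\omega_*}$, and concludes $M(Q_{\omega_*})=m_0$ from Theorem \ref{t4.14}; the uniqueness of $\omega_*$ is extracted from the uniqueness of every object in that chain. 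That mechanism --- passing through the unique energy minimizer at the rescaled mass rather than comparing two hypothetical minimizing frequencies --- is precisely what is absent from your write-up.
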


\begin{proof}
By Lemma \ref{p2.17},
\begin{equation}
m_{0}=\inf_{\{0<\omega<\frac{3}{16}\}}M(Q_{\omega}),\;\;\; \frac{4}{3\sqrt{3}}\rho\leq m_{0}\leq\rho.
\end{equation}
Then Proposition \ref{p2.2a} implies that there exists $\omega_{0}\in (0, \frac{3}{16})$ such that $M(Q_{\omega_{0}})=m_{0}$.  Let $M(R_{\omega_{0}})=m$.  By Lemma \ref{p2.17},
$m\geq\frac{4}{3\sqrt{3}}\rho$. According to Proposition \ref{p2.16} and Theorem \ref{t4.1},  the variational  problem $d_{m}^{I}$ possesses a unique positive minimizer $\psi$.  Let $Q_{\omega_{\ast}}$  be the unique rescaling function by
\begin{equation}
\psi(x)=aQ_{\omega_{\ast}}(\lambda x)
\end{equation}
with (\ref{3.2}).  Then $Q_{\omega_{\ast}}$ satisfies (\ref{1.2}) and $\omega_{\ast}$ is uniquely determined by $m_{0}$.  Moreover from Theorem \ref{t4.14},
\begin{equation}
M(Q_{\omega_{\ast}})=m_{0}.
\end{equation}
This proves Theorem \ref{t4.25}.
\end{proof}

For $m\geq\frac{4}{3\sqrt{3}}\rho$, the variational problem (\ref{1.31}) possesses a unique positive minimizer $\psi$ up to a translation. We call this minimizer $\psi$ the energy minimizer. Then $\psi$ satisfies the Euler-Lagrange equation (\ref{3.1}) with the Lagrange multipliers $\mu\geq 0$ and $\nu>0$. Under the change of variables $\psi(x)=av(\lambda x)$ with (\ref{3.2}), one uniquely determines $v(x)$ satisfying (\ref{1.37}). We call this $v(x)$ the energy ground state. Thus $v=Q_{\omega}$ up to a translation for certain $\omega\in (0, \frac{3}{16})$. In fact, by (\ref{3.2})
\begin{equation}
\omega=\frac{2\nu(1+6\mu)}{(1+3\mu)^{2}}.
\end{equation}
Then for every $m\geq\frac{4}{3\sqrt{3}}\rho$, we can get the above $\omega\in (0, \frac{3}{16})$. We denote the set including all these $\omega$ by $\mathcal{B}$.

%

\begin{theorem}\label{t4.16}
$\mathcal{B}\subset (0, \frac{3}{16})$ and $\omega_{\ast}\in \mathcal{B}$.
\end{theorem}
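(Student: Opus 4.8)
The plan is to read off both inclusions directly from the construction of $\mathcal{B}$ together with the uniqueness results already established, so that no new variational or spectral input is required. Recall that $\mathcal{B}$ is by definition the collection of frequencies attached to the energy ground states: for each admissible mass $m\geq\frac{4}{3\sqrt{3}}\rho$ one takes the unique positive minimizer $\psi$ of $d_m^I$ furnished by Proposition \ref{p2.16} and Theorem \ref{t4.1}, rescales via $\psi(x)=av(\lambda x)$ with (\ref{3.2}), and records the resulting frequency $\omega=\frac{2\nu(1+6\mu)}{(1+3\mu)^2}$. The two assertions then reduce to (i) checking that each such $\omega$ genuinely lies in $(0,\frac{3}{16})$, and (ii) exhibiting one admissible mass whose attached frequency is exactly $\omega_{\ast}$.

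For the inclusion $\mathcal{B}\subset(0,\frac{3}{16})$, I would fix any $\omega\in\mathcal{B}$ and let $v$ be the associated energy ground state, obtained by rescaling the positive energy minimizer $\psi$ through $\psi(x)=av(\lambda x)$ with $a,\lambda>0$ as in (\ref{3.2}). Since $\psi>0$, this rescaling forces $v>0$, and by construction $v$ solves (\ref{1.37}), which is precisely equation (\ref{1.2}). Thus $v$ is a nontrivial positive solution of (\ref{1.2}), so Proposition \ref{p2.2}—which asserts that (\ref{1.2}) admits a positive solution if and only if $\omega\in(0,\frac{3}{16})$—yields $\omega\in(0,\frac{3}{16})$. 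As $\omega\in\mathcal{B}$ was arbitrary, this gives $\mathcal{B}\subset(0,\frac{3}{16})$.

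For $\omega_{\ast}\in\mathcal{B}$, I would revisit the construction carried out in the proof of Theorem \ref{t4.25}. There $\omega_{\ast}$ is produced by first choosing $\omega_{0}\in(0,\frac{3}{16})$ with $M(Q_{\omega_{0}})=m_{0}$ (via Proposition \ref{p2.2a}), setting $m:=M(R_{\omega_{0}})$, and then rescaling the unique positive minimizer $\psi$ of $d_m^I$ through (\ref{3.2}) to obtain $Q_{\omega_{\ast}}$. The decisive point is that this $m$ is an \emph{admissible} mass in the definition of $\mathcal{B}$: Lemma \ref{p2.17} provides exactly $m=M(R_{\omega_{0}})\geq\frac{4}{3\sqrt{3}}\rho$. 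Consequently the frequency that the definition of $\mathcal{B}$ attaches to the mass $m$ is, by the uniqueness of the energy minimizer (Theorem \ref{t4.1}) and of the rescaled ground state, none other than $\omega_{\ast}$. Hence $\omega_{\ast}\in\mathcal{B}$.

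The argument involves no genuine obstacle; the only care needed is to confirm that the frequency assigned to the mass $m=M(R_{\omega_{0}})$ in the construction defining $\mathcal{B}$ truly coincides with the $\omega_{\ast}$ produced in Theorem \ref{t4.25}. This is guaranteed because both invoke the \emph{same} unique positive minimizer of $d_m^I$ and the \emph{same} change of variables (\ref{3.2}), with the uniqueness statements of Theorem \ref{t4.1} and Theorem \ref{t4.25} removing any ambiguity. The whole proof is thus a careful unwinding of definitions, anchored by the mass bound of Lemma \ref{p2.17}.
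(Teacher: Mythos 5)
Your proposal is correct and follows essentially the same route as the paper: the inclusion $\mathcal{B}\subset(0,\frac{3}{16})$ is read off from the construction (the paper simply calls it clear), and membership $\omega_{\ast}\in\mathcal{B}$ is obtained by exhibiting the admissible mass $m=M(R_{\omega_{0}})=M(R_{\omega_{\ast}})\geq\frac{4}{3\sqrt{3}}\rho$ from Lemma \ref{p2.17} and tracing the uniqueness chain $d_{m}^{I}\to\psi\to Q_{\omega_{\ast}}$ of Theorems \ref{t4.1}, \ref{t4.14} and \ref{t4.25}, exactly as the paper does.
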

\begin{proof}
It is clear that $\mathcal{B}\subset (0, \frac{3}{16})$. According to Theorem \ref{t4.14} and Theorem \ref{t4.25}, $\omega_{\ast}$ satisfies that
\begin{equation}
M(Q_{\omega_{\ast}})=m_{0},\;\;\; M(R_{\omega_{\ast}})=m\geq\frac{4}{3\sqrt{3}}\rho.
\end{equation}
Thus $d_{m}^{I}$ uniquely determines the energy minimizer $\psi$ and this $\psi$ uniquely determines the energy ground state $Q_{\omega_{\ast}}$. Therefore $\omega_{\ast}\in \mathcal{B}$.

This proves Theorem \ref{t4.16}.
\end{proof}

Now we complete the proof of Theorem \ref{t1.1}.

Proof of Theorem \ref{t1.1}: By Theorem \ref{t3.3}, Theorem \ref{t4.1}, Theorem \ref{t4.13}, Theorem \ref{t4.2} and Theorem \ref{t4.25}, one deduces Theorem \ref{t1.1}.

\section{Stability of the energy ground states}\label{sec10}
\begin{theorem}\label{t5.1}
Let $m\geq \frac{4}{3\sqrt{3}}\rho$. Then for arbitrary minimizing sequence $\{u_n\}^{\infty}_{n=1}$ of (\ref{1.31}), there exists a subsequence still denoted by  $\{u_n\}^{\infty}_{n=1}$  such that for $n$ large enough
$$\int \lvert u_n\rvert^4dx\geq c>0, \quad
\int \lvert \nabla u_n\rvert^2dx\geq c>0.$$
\end{theorem}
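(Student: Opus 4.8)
The plan is to derive both lower bounds simultaneously from the single structural constraint $I(u_n)=0$ together with the Gagliardo--Nirenberg inequality, without appealing to the (sign-dependent) value of $d_m^I$. Since $\{u_n\}_{n=1}^\infty$ is a minimizing sequence for (\ref{1.31}), every $u_n$ lies in the constraint set, so $M(u_n)=m$ and $I(u_n)=0$ exactly. Writing $G_n:=\int\lvert\nabla u_n\rvert^2dx$, the identity $I(u_n)=0$ reads $\frac14\int\lvert u_n\rvert^4dx=\frac13 G_n+\frac13\int\lvert u_n\rvert^6dx$, and dropping the nonnegative sextic term gives the key lower bound $\int\lvert u_n\rvert^4dx\geq\frac43 G_n$. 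Moreover $G_n>0$ for every $n$, since a $u_n\in H^1(\mathbb{R}^3)$ with $\int\lvert u_n\rvert^2dx=m>0$ cannot be constant.

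Next I would control $\int\lvert u_n\rvert^4dx$ from above by the gradient. The Gagliardo--Nirenberg inequality on $\mathbb{R}^3$, namely $\int\lvert u\rvert^4dx\leq C\lvert\lvert u\rvert\rvert_{L^2}\lvert\lvert\nabla u\rvert\rvert_{L^2}^3$, applied with $\lvert\lvert u_n\rvert\rvert_{L^2}^2=m$, yields $\int\lvert u_n\rvert^4dx\leq C\,m^{1/2}G_n^{3/2}$. Combining this with the lower bound from the first step gives $\frac43 G_n\leq C\,m^{1/2}G_n^{3/2}$; dividing by $G_n>0$ produces $G_n^{1/2}\geq\frac{4}{3Cm^{1/2}}$, hence the uniform bound $G_n\geq\frac{16}{9C^2m}=:c>0$ for every $n$. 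Feeding this back into $\int\lvert u_n\rvert^4dx\geq\frac43 G_n$ gives $\int\lvert u_n\rvert^4dx\geq\frac43 c>0$, so both desired estimates hold. In fact they hold for the whole sequence and all $n$, which is stronger than the stated conclusion, so no passage to a subsequence is actually needed.

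The conceptual crux --- and essentially the only place where something could fail --- is the exclusion of a vanishing minimizing sequence, i.e.\ one along which $G_n\to0$ and every $L^p$-norm dissipates to zero. The argument succeeds precisely because of the scaling mismatch between the two controls on $\int\lvert u_n\rvert^4dx$: the constraint $I(u_n)=0$ forces this quantity to be at least of order $G_n$, whereas Gagliardo--Nirenberg bounds it by $G_n^{3/2}=o(G_n)$ as $G_n\to0$, and the two are compatible only if $G_n$ stays bounded away from $0$. I expect this to be the main obstacle, and it is worth stressing that the constant $c$ depends only on $m$ and the Gagliardo--Nirenberg constant; the estimate is therefore robust and, in particular, covers the borderline mass $m=\rho$ at which $d_m^I=0$ and a purely energy-based argument (using $E(u_n)\to d_m^I$ together with the representation $E(u)=\frac13\int\lvert\nabla u\rvert^2dx-\frac18\int\lvert u\rvert^4dx$ valid on $\{I=0\}$) would be inconclusive.
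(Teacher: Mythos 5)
Your proof is correct, and it takes a genuinely different, more economical route than the paper's. The paper argues by a trichotomy on the sign of $d_m^I$: when $d_m^I<0$ (i.e.\ $m>\rho$) it uses the eventual negativity of $E(u_n)$ to get $\int\lvert u_n\rvert^4dx\geq -4(d_m+\delta)$ and then Gagliardo--Nirenberg for the gradient bound; when $d_m^I>0$ it uses the identity $E(u_n)=\frac16\int\lvert\nabla u_n\rvert^2dx-\frac16\int\lvert u_n\rvert^6dx$ valid on $\{I=0\}$ together with $E(u_n)\geq d_m^I-\delta>0$; and in the borderline case $d_m^I=0$ (i.e.\ $m=\rho$) it passes to a weakly convergent subsequence and argues that the weak limit is nonzero via Proposition \ref{p2.5} --- the only place where a subsequence is actually extracted, and the most delicate step of its proof. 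Your argument eliminates the case analysis entirely: the constraint $I(u_n)=0$ alone gives $\int\lvert u_n\rvert^4dx\geq\frac43\int\lvert\nabla u_n\rvert^2dx$, and playing this against the Gagliardo--Nirenberg bound $\int\lvert u_n\rvert^4dx\leq Cm^{1/2}\bigl(\int\lvert\nabla u_n\rvert^2dx\bigr)^{3/2}$ (the same inequality (\ref{5.2}) the paper invokes in its Case 2) forces $\int\lvert\nabla u_n\rvert^2dx\geq 16/(9C^2m)$, because the two controls on the quartic term scale incompatibly as the gradient tends to zero. What your route buys: explicit constants depending only on $m$ and the Gagliardo--Nirenberg constant; validity for the whole sequence and every $n$, with no subsequence and no ``$n$ large''; and complete independence of the energy level --- the conclusion holds for any sequence in the constraint set $\{M(u)=m,\ I(u)=0\}$, minimizing or not, so in particular the borderline mass $m=\rho$ needs no special treatment. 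What the paper's case analysis yields in exchange is only a constant $c$ expressed through the energy level $d_m^I$ itself, which is not needed for the statement; on this theorem your unified argument is strictly cleaner.
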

\begin{proof}
Since $m\geq \frac{4}{3\sqrt{3}}\rho$, by Proposition \ref{p2.16}, the variational problem (\ref{1.31}) is achieved. Take arbitrary minimizing sequence $\{u_n\}^{\infty}_{n=1}$ of (\ref{1.31}) such that
\begin{equation}\label{5.0a}
M(u_n)=\int \lvert u_n\rvert^2dx=m,\;\;I(u_{n})=0,\;\; E(u_n)\rightarrow d_m^{I} \ as \ n\rightarrow\infty.
\end{equation}
It is clear that $\{u_n\}^{\infty}_{n=1}$ is bounded in $H^1(\mathbb{R}^3)$. Therefore there exist $v\in H^1(\mathbb{R}^3)$ and certain subsequence of $\{u_n\}^{\infty}_{n=1}$, still denoted by $\{u_n\}^{\infty}_{n=1}$ such that
\begin{equation}\label{5.0b}
u_n \rightharpoonup v \ in \ H^1(\mathbb{R}^3).
\end{equation}
In the following we divide three cases to proceed.

Case 1: $d_m^{I}=0$.

In this case, $m=\rho$. From (\ref{5.0b}) we have that
\begin{equation}\label{5.0cc}
\int \lvert v\rvert^2dx\leq\rho.
\end{equation}
If
\begin{equation}\label{5.0c}
\int \lvert v\rvert^2dx=\rho,
\end{equation}
we directly get that $v\not\equiv 0$. If
\begin{equation}\label{5.0d}
\int \lvert v\rvert^2dx<\rho,
\end{equation}
 from Proposition \ref{p2.5} we have that $E(v)>0$. It follows that $v\not\equiv 0$.
Therefore for (\ref{5.0cc}) we always get $v\not\equiv 0$. Thus by the weak lower semi-continuity, for $n$ large enough we have that
\begin{equation}\label{5.4aa}
\int \lvert  u_n\rvert^4dx\geq \int \lvert v\rvert^4dx=c>0,
\end{equation}

\begin{equation}\label{5.0e}
\int \lvert \nabla u_n\rvert^2dx\geq \int \lvert \nabla  v\rvert^2dx=c>0.
\end{equation}

Case 2: $d_m^{I}<0$.

In this case, $m>\rho$ and $d_{m}^{I}=d_{m}$. Since $d_m<0$, for $n$ large enough we can choose a positive number $\delta$ such that
\begin{equation}\label{5.0f}
0<\delta<-d_m \ and \ E(u_n)\leq d_m+\delta<0.
\end{equation}
It follows that for $n$ large enough
\begin{equation}\label{5.1}
\int\lvert u_n\rvert^4dx\geq-4(d_m+\delta)=c>0.
\end{equation}
Applying Nirenberg-Gagliardo inequality
\begin{equation}\label{5.2}
\lvert\lvert u\rvert \rvert^4_{L^4(\mathbb{R}^3)}\leq  c \lvert\lvert \nabla u\rvert \rvert^3_{L^2} \lvert\lvert  u\rvert \rvert_{L^2},
\end{equation}
noting that $\lvert\lvert  u_n\rvert \rvert^2_{L^2}=m$, (\ref{5.1}) implies that
\begin{equation}\label{5.2a}
\int \lvert \nabla u_n\rvert^2dx \geq c>0.
\end{equation}

Case 3: $d_m^{I}>0$.

In this case, $m\geq\frac{4}{3\sqrt{3}}\rho$. Since $d_{m}^{I}>0$, for $n$ large enough we can choose a positive $\delta$ such that
\begin{equation}
E(u_{n})\geq d_{m}^{I}-\delta>0.
\end{equation}
By $I(u_{n})=0$ one gets
\begin{equation}
E(u_{n})=\int\frac{1}{6}\rvert \nabla u_{n}\rvert^{2}-\frac{1}{6}\rvert u_{n}\rvert^{6}dx.
\end{equation}
It follows that
\begin{equation}
\int\rvert \nabla u_{n}\rvert^{2}dx\geq c>0
\end{equation}
for $n$ large enough. On the other hand, $I(u_{n})=0$ derives that
\begin{equation}
\int\frac{1}{4}\rvert u_{n}\rvert^{4}dx=\int\frac{1}{3}\rvert \nabla u_{n}\rvert^{2}+\frac{1}{3}\rvert u_{n}\rvert^{6}dx.
\end{equation}
Thus one gets that
\begin{equation}
\int\rvert u_{n}\rvert^{4}dx\geq c>0
\end{equation}
for $n$ large enough.

The proof of Theorem \ref{t5.1} is completed.

\end{proof}
\begin{theorem}\label{t5.2}
Let $m\geq \frac{4}{3\sqrt{3}}\rho$ and $\psi$ be the unique positive minimizer of (\ref{1.31}). Then for arbitrary minimizing sequence $\{u_n\}^{\infty}_{n=1}$ of (\ref{1.31}), there exists a subsequence still denoted by  $\{u_n\}^{\infty}_{n=1}$  such that for some $\theta\in\mathbb{R}$ and $y\in\mathbb{R}^3$
$$u_n\rightarrow \psi(.+y)e^{i\theta} \ in \ H^1(\mathbb{R}^3), \ as\  n\rightarrow\infty.$$
\end{theorem}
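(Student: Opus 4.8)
The plan is to prove the strong convergence by a concentration--compactness argument built on the profile decomposition of Proposition \ref{p3.1}, using Theorem \ref{t5.1} to exclude vanishing and Theorem \ref{t4.3} to identify the limit. First I would record the data of the minimizing sequence: $M(u_n)=m$, $I(u_n)=0$ and $E(u_n)\to d_m^{I}$. As in the proof of Theorem \ref{t5.1}, $\{u_n\}$ is bounded in $H^1(\mathbb{R}^3)$, and after passing to the subsequence furnished by Theorem \ref{t5.1} one has $\int|u_n|^4\,dx\ge c>0$ and $\int|\nabla u_n|^2\,dx\ge c>0$ for $n$ large. These lower bounds are exactly what is needed to rule out the vanishing scenario.

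Next I would apply Proposition \ref{p3.1} to $\{u_n\}$, writing
\begin{equation*}
u_n=\sum_{j=1}^{l}U^j(\cdot-x_n^j)+u_n^l,\qquad \lim_{l\to\infty}\limsup_{n\to\infty}\|u_n^l\|_{L^q(\mathbb{R}^3)}=0\ \ (q\in[2,6]),
\end{equation*}
together with the asymptotic splitting of the $L^2$, $L^4$, $L^6$ and gradient norms. Consequently $M$, $E$ and $I$ decouple along the profiles up to $o(1)$, exactly as in (\ref{3.18})--(\ref{3.19}). Since $\|u_n^l\|_{L^4}\to0$ while $\int|u_n|^4\,dx\ge c>0$, the $L^4$ splitting forces $\sum_j\|U^j\|_{L^4}^4\ge c$, so at least one profile $U:=U^{j_0}$ is nonzero, i.e. vanishing is excluded. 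I would also record the structural constraint behind Proposition \ref{p2.16}(I): any nonzero $w$ with $I(w)=0$ must satisfy $M(w)\ge\frac{4}{3\sqrt3}\rho$, since otherwise the admissible set of (\ref{1.31}) would be empty.

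The heart of the proof is to show that exactly one profile survives and that it carries the full mass, i.e. after relabelling $U$ is the only nonzero profile, $M(U)=m$, and $u_n^l\to0$ in $H^1$. For each nonzero profile I would use the scaling of Proposition \ref{p3.2} to produce $U^j_{\lambda_j}$ with $I(U^j_{\lambda_j})=0$ and $M(U^j_{\lambda_j})=M(U^j)$, distinguishing the two situations of the proof of Theorem \ref{t3.3} according to the sign of $\int\bigl(\tfrac13|\nabla U^j|^2-\tfrac14|U^j|^4\bigr)\,dx$; the rescaled profile is then admissible for $d^I_{M(U^j)}$, so passing to the limit in the energy splitting yields
\begin{equation*}
d_m^{I}=\lim_{n\to\infty}E(u_n)\ \ge\ \sum_j d^{I}_{M(U^j)},\qquad \sum_j M(U^j)\le m .
\end{equation*}
Combining this with the strict monotonicity and lower semicontinuity of $\mu\mapsto d_\mu^{I}$ (Proposition \ref{p2.16}(IV)) and with the mass floor $M(U^j)\ge\frac{4}{3\sqrt3}\rho$ should force a single profile with $M(U)=m$ and $\lim_n\|u_n^l\|_{L^2}=0$. \textbf{This dichotomy step is the main obstacle:} I must upgrade the stated strict monotonicity of $d_\mu^{I}$ into the strict subadditivity $d_a^{I}+d_b^{I}>d_{a+b}^{I}$ that excludes a genuine splitting of mass, and in the regime $m\ge\rho$ (where $d_m^{I}=d_m\le0$) this is precisely where the scaling argument for the negative-energy problem $d_m$ has to be invoked, paralleling the treatment of Case 1 and Case 2 in the proof of Theorem \ref{t3.3}.

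Finally, with a single full-mass profile, $u_n(\cdot+x_n^{j_0})\to U$ in $L^q(\mathbb{R}^3)$ for $q\in[2,6]$ and $u_n(\cdot+x_n^{j_0})\rightharpoonup U$ in $H^1(\mathbb{R}^3)$, with $M(U)=m$. Weak lower semicontinuity gives $E(U)\le d_m^{I}$ and $I(U)\le0$; if $I(U)<0$ I would rescale via Proposition \ref{p3.2} to reach $I=0$ while lowering $E$ and preserving the mass, which contradicts the definition of $d_m^{I}$ unless $I(U)=0$. Hence $U$ is a minimizer of (\ref{1.31}), and Theorem \ref{t4.3} identifies it as $U=e^{i\theta}\psi(\cdot+y)$ for some $\theta\in\mathbb{R}$ and $y\in\mathbb{R}^3$. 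To upgrade to strong convergence I would combine $E(u_n)\to d_m^{I}=E(U)$ with the already-established convergence of the $L^4$ and $L^6$ norms to write $\|\nabla u_n\|_{L^2}^2=2E(u_n)+\tfrac12\|u_n\|_{L^4}^4-\tfrac13\|u_n\|_{L^6}^6\to\|\nabla U\|_{L^2}^2$; convergence of the gradient norms together with weak $H^1$ convergence then yields $u_n\to e^{i\theta}\psi(\cdot+y)$ strongly in $H^1(\mathbb{R}^3)$ along the subsequence (after absorbing the centering translations $x_n^{j_0}$, which concentrate at a single point), which is the claim.
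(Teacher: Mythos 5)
Your skeleton --- profile decomposition (Proposition \ref{p3.1}), exclusion of vanishing via Theorem \ref{t5.1}, identification of the limit via Theorem \ref{t4.3}, and the final upgrade to strong $H^1$ convergence --- is the same as the paper's. But the decisive step, excluding a genuine splitting of the mass among several profiles, is exactly where your argument is incomplete, as you yourself flag. You compare each profile with the constrained problem at its own mass and then need the strict subadditivity $d_a^{I}+d_b^{I}>d_{a+b}^{I}$. This cannot be deduced from Proposition \ref{p2.16}: strict monotonicity and lower semicontinuity of $\mu\mapsto d_\mu^{I}$ are compatible with failure of subadditivity whenever $d_m^{I}<0$. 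Concretely, if $m>2\rho$ both fragments can have mass $>\rho$, in which case $d_a^{I}$ and $d_b^{I}$ are negative; each exceeds $d_m^{I}$ by monotonicity, yet two negative numbers each exceeding $d_m^{I}$ can still sum to less than $d_m^{I}$ (e.g.\ $d_m^{I}=-10$, $d_a^{I}=d_b^{I}=-6$). So ``should force a single profile'' is not justified; repairing it along your lines would require a Lions-type strict scaling inequality such as $d_{\theta a}<\theta d_a$ for $\theta>1$ when $d_a<0$, an additional argument you do not supply. Note also that the Case 1/Case 2 treatment of Theorem \ref{t3.3} that you hope to parallel works only because the cross-constrained problem (\ref{3.3}) admits all masses $\le$ the reference mass, so a single sub-mass profile is itself admissible there; this is precisely what fails for the fixed-mass problem (\ref{1.31}).

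The paper's proof avoids subadditivity altogether, and this is the missing idea. In (\ref{5.10})--(\ref{5.16}) each profile is dilated up to the \emph{full} mass $m$, namely $\widetilde U_n^j=U_n^j(\lambda_j^{-1}x)$ with $\lambda_j=(\|u_n\|_{L^2}/\|U_n^j\|_{L^2})^{2/3}\ge 1$, and the exact identity (\ref{5.14}) gives $E(U_n^j)=\lambda_j^{-3}E(\widetilde U_n^j)+\frac{1-\lambda_j^{-2}}{2}\|\nabla U_n^j\|_{L^2}^2$, whose second term is nonnegative. After the $L^2$-preserving rescaling of Proposition \ref{p3.2}, $E(\widetilde U_n^j)\ge d_m^{I}$, and since the weights $\lambda_j^{-3}=M(U^j)/m$ (together with the weight of the remainder $u_n^l$) sum to $1+o(1)$, every piece is compared against the single number $d_m^{I}$ --- the sign of $d_m^{I}$ is irrelevant --- which yields (\ref{5.16}). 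The gradient lower bound of Theorem \ref{t5.1} then forces $\min\{\lambda_{j_0},\lambda_n^l\}=1$, and $\lambda_n^l=1$ is excluded by the $L^4$ lower bound of Theorem \ref{t5.1}; hence exactly one profile carries all of the mass, with no subadditivity needed. If you replace your dichotomy step by this full-mass renormalization, the rest of your argument goes through essentially as you wrote it.
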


\begin{proof}
Since $m\geq \frac{4}{3\sqrt{3}}\rho$, by Proposition \ref{p2.16} and Theorem \ref{t4.1}, the variational problem (\ref{1.31}) possesses a unique positive minimizer $\psi(x)$ up to translations. Now we apply Proposition \ref{p3.1} to the minimizing sequence $\{u_n\}^{\infty}_{n=1}$ of (\ref{1.31}). Then there exists a subsequence still denoted by $\{u_n\}^{\infty}_{n=1}$ such that
\begin{equation}\label{5.3}
u_n(x)=\sum_{j=1}^{l}U_n^j(x)+u_n^l,
\end{equation}
where $U^j_n(x):=U^j(x-x^j_n)$ and $u^l_n:=u^l_n(x)$ satisfies
\begin{equation}\label{5.4}
\lim_{l\rightarrow\infty}\limsup_{n\rightarrow\infty}\lvert\lvert u^l_n\rvert \rvert_{L^q(\mathbb{R}^3)}=0\quad with \quad  q\in [2,6],
\end{equation}
\begin{equation}\label{5.5}
\lvert\lvert u_n\rvert \rvert^2_{L^2}=\sum^l_{j=1}\lvert\lvert U_n^j\rvert \rvert^2_{L^2}+\lvert\lvert u_n^l\rvert \rvert^2_{L^2}+o(1),
\end{equation}
\begin{equation}\label{5.6}
\lvert\lvert \nabla u_n\rvert \rvert^2_{L^2}=\sum^l_{j=1}\lvert\lvert\nabla U_n^j\rvert \rvert^2_{L^2}+\lvert\lvert\nabla u_n^l\rvert \rvert^2_{L^2}+o(1),
\end{equation}
\begin{equation}\label{5.7}
\lvert\lvert u_n\rvert \rvert^4_{L^4(\mathbb{R}^3)}=\sum^l_{j=1}\lvert\lvert U_n^j\rvert \rvert^4_{L^4(\mathbb{R}^3)}+\lvert\lvert u_n^l\rvert \rvert^4_{L^4(\mathbb{R}^3)}+o(1),
\end{equation}
\begin{equation}\label{5.8}
\lvert\lvert u_n\rvert \rvert^6_{L^6(\mathbb{R}^3)}=\sum^l_{j=1}\lvert\lvert U_n^j\rvert \rvert^6_{L^6(\mathbb{R}^3)}+\lvert\lvert u_n^l\rvert \rvert^6_{L^6(\mathbb{R}^3)}+o(1),
\end{equation}
Thus we have that
\begin{equation}\label{5.9}
E(u_n)=\sum^l_{j=1}E(U^j_n)+E(u^l_n)+o(1) \quad as \quad n\rightarrow \infty.
\end{equation}
For $j=1,...,l$, let
\begin{equation}\label{5.10}
\lambda_j=\left(\frac{\lvert\lvert u_n\rvert \rvert_{L^2}}{\lvert\lvert U^j_n\rvert \rvert_{L^2}}\right)^{\frac{2}{3}},\quad
\lambda^l_n=\left(\frac{\lvert\lvert u_n\rvert \rvert_{L^2}}{\lvert\lvert u^l_n\rvert \rvert_{L^2}}\right)^{\frac{2}{3}}.
\end{equation}
By (\ref{5.5}) we have that $\lambda_j\geq 1$ and $\lambda^l_n\geq1$. In addition, from the convergence of $\mathop{\sum}^l\limits_{j=1}\lvert\lvert U^j_n\rvert \rvert^2_{L^2}$, there exists a $j_0\geq 1 $ such that
\begin{equation}\label{5.11}
\mathop{\inf}\limits_{j\geq1}\lambda_j=\lambda_{j_0}
=\left(\frac{\lvert\lvert u_n\rvert \rvert_{L^2}}{\lvert\lvert U^{j_0}_n\rvert \rvert_{L^2}}\right)^{\frac{2}{3}}.
\end{equation}
For $j=1,...,l$, put
\begin{equation}\label{5.12}
\widetilde{U}^j_n=U^j_n(\lambda^{-1}_j\ x), \quad \widetilde{u}^l_n=u^l_n((\lambda^{l}_n)^{-1}\ x).
\end{equation}
Then we have that
\begin{equation}\label{5.13}
\lvert\lvert \widetilde{U}^j_n\rvert \rvert^2_{L^2}=m=\lvert\lvert \widetilde{u}^l_n\rvert \rvert^2_{L^2},
\end{equation}
\begin{equation}\label{5.14}
E(U^j_n)=\frac{E(\widetilde{U}^j_n)}{\lambda^3_j}+\frac{1-\lambda^{-2}_j}{2}\int\lvert \nabla U^j_n\rvert^2dx,
\end{equation}
\begin{equation}\label{5.15}
E(u^l_n)=\frac{E(\widetilde{u}^l_n)}{(\lambda^l_n)^3}+\frac{1-(\lambda^l_n)^{-2}}{2}\int\lvert \nabla u^l_n\rvert^2dx.
\end{equation}
Now for $j=1,...,l$, put
\begin{equation}
V_{n}^{j}=\mu_{j}^{\frac{3}{2}}\widetilde{U}^j_n(\mu_{j} x),\;\;\; v_{n}^{l}=(\mu_{n}^{l})^{\frac{3}{2}}\widetilde{u}^l_n(\mu_{n}^{l} x).
\end{equation}
Then by Proposition \ref{p3.2}, there exist $\mu_{j}>0\;(j=1,...,l)$ and $\mu_{n}^{l}>0$ such that
\begin{equation}
I(V_{n}^{j})=0,\;\;\; I(v_{n}^{l})=0,
\end{equation}
\begin{equation}
\lvert\lvert V_{n}^{j}\lvert\lvert_{L^{2}}^{2} =m=\lvert\lvert v_{n}^{l}\lvert\lvert_{L^{2}}^{2},
\end{equation}
\begin{equation}
E(\widetilde{U}^j_n)\geq E(V_{n}^{j}),\;\;\; E(\widetilde{u}^l_n)\geq E(v_{n}^{l}).
\end{equation}
Thus we deduce that as $n\rightarrow\infty$ and $l\rightarrow\infty$,
\begin{equation}\label{5.16}
E(u_n)\geq d_m^{I}+\mathop{\inf}\limits_{j\geq1}(\frac{1-\lambda^{-2}_j}{2})\sum^l_{j=1}\int\lvert\nabla U^j_n\rvert^2dx+\frac{1-(\lambda^l_n)^{-2}}{2}\int\lvert\nabla u^l_n\rvert^2dx+o(1).
\end{equation}
Let $\beta=min\{\lambda_{j_0},\lambda^l_n\}$. Then by Theorem \ref{t5.1} we have that
\begin{equation}\label{5.17}
H(u_n)\geq d_m^{I}+\frac{1-\beta^{-2}}{2}c+o(1).
\end{equation}
It follows that
\begin{equation}\label{5.18}
d_m^{I} \geq d_m^{I}+\frac{1-\beta^{-2}}{2}c.
\end{equation}
Since $c$ is a positive constant, it follows that $\beta\leq1$. Thus we get that
\begin{equation}\label{5.19}
\lvert\lvert u_n\rvert \rvert_{L^2}\leq \lvert\lvert U^{j_0}_n\rvert \rvert_{L^2} \quad or \quad
\lvert\lvert u_n\rvert \rvert_{L^2}\leq \lvert\lvert u^{l}_n\rvert \rvert_{L^2}.
\end{equation}
If $\lvert\lvert u_n\rvert \rvert_{L^2}\leq\lvert\lvert u^{l}_n\rvert \rvert_{L^2}$, one deduces that
\begin{equation}\label{5.20}
\lvert\lvert u_n\rvert \rvert^4_{L^4(\mathbb{R}^3)}\rightarrow0 \quad as \quad n\rightarrow\infty.
\end{equation}
This is contradictory with Theorem \ref{t5.1}.
Therefore it is necessary that
\begin{equation}\label{5.20c}
\lvert\lvert u_n\rvert \rvert_{L^2}\leq\lvert\lvert U^{j_0}_n\rvert \rvert_{L^2}.
\end{equation}
Thus we get that
\begin{equation}\label{5.21}
\lvert\lvert u_n\rvert \rvert^2_{L^2}=\lvert\lvert U^{j_0}_n\rvert \rvert^2_{L^2}, \quad
\lvert\lvert \nabla u_n\rvert \rvert^2_{L^2}=\lvert\lvert \nabla U^{j_0}_n\rvert \rvert^2_{L^2}.
\end{equation}
By (\ref{5.3}), (\ref{5.5}) and (\ref{5.6}), it follows that
\begin{equation}\label{5.21aa}
u_n(x)=U^{j_0}_n(x)=U^{j_0}(x-x^{j_0}_n).
\end{equation}
Let
\begin{equation}\label{5.21a}
u_n\rightharpoonup v \ in  \ H^1(\mathbb{R}^3).
\end{equation}
Then
\begin{equation}\label{5.21b}
u_n\rightarrow v \ a.e.\ in \ \mathbb{R}^3.
\end{equation}
Thus there exists some fixed $x^{j_0}_n$, denoted by $x^{j_0}$ such that
\begin{equation}\label{5.21c}
v=U^{j_0}(x-x^{j_0}):=U^{j_0}, \ a.e. \ in\ \mathbb{R}^3.
\end{equation}
Therefore we have that
\begin{equation}\label{5.22}
u_n\rightarrow U^{j_0}\ \  in \ \ H^1(\mathbb{R}^3).
\end{equation}
It is clear that $I(U^{j_0})=0$. Thus $U^{j_0}$ is a minimizer of (\ref{1.31}). Then for some $y\in\mathbb{R}^3$ and $\theta\in\mathbb{R}$,
\begin{equation}\label{5.22a}
U^{j_0}=\psi(\cdot+y)e^{i\theta}.
\end{equation}
Therefore
\begin{equation}\label{5.23}
u_n\rightarrow \psi(\cdot+y)e^{i\theta}\ \  in \ \ H^1(\mathbb{R}^3).
\end{equation}

This completes the proof of Theorem \ref{t5.2}.
\end{proof}
\begin{definition}\label{d5.3}
Let $\omega\in(0,\frac{3}{16})$ and $Q_\omega$ be the ground state of (\ref{1.2}). We call the solution $Q_\omega(x)e^{i\omega t}$ of (\ref{1.1}) holds the orbital stability  if arbitrary $\varepsilon>0$, there exists $\delta>0$ such that for any $\varphi_0\in H^1(\mathbb{R}^3)$, when
\begin{equation*}
\mathop{\inf}\limits_{\{\theta\in\mathbb{R},y\in\mathbb{R}^3\}}\lvert\lvert \varphi_0(\cdot)-e^{i\theta}Q_\omega(\cdot+y)\rvert \rvert_{H^1}<\delta,
\end{equation*}
the solution $\varphi(t,x)$ of (\ref{1.1}) with $\varphi(0,x)=\varphi_0(x)$ satisfies
\begin{equation*}
\mathop{\inf}\limits_{\{\theta\in\mathbb{R},y\in\mathbb{R}^3\}}\lvert\lvert \varphi(t,\cdot)-e^{i\theta}Q_\omega(\cdot+y)\rvert \rvert_{H^1}<\varepsilon,\quad t\in\mathbb{R}.
\end{equation*}
\end{definition}

\begin{remark}\label{r5.4}
By Definition \ref{d5.3},  the ground state $Q_\omega(x)$ of (\ref{1.2}) holds the orbital stability if and only if the soliton $Q_\omega(x)e^{i\omega t}$ of (\ref{1.1})  holds the orbital stability. Moreover  all solitons in (\ref{1.10}) hold the orbital stability if $Q_\omega(x)$ holds the orbital stability.
\end{remark}
\begin{theorem}\label{t5.5}
Let $m\geq \frac{4}{3\sqrt{3}}\rho$, $\psi$ be the unique positive minimizer of (\ref{1.31}) up to translations and $v$ be the unique rescaling function by $\psi(x)=av(\lambda x)$ with (\ref{3.2}). Then the energy ground state $v(x)$ holds the orbital stability.
\end{theorem}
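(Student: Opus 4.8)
The plan is to run the Cazenave--Lions concentration--compactness argument, using the compactness already established in Theorem \ref{t5.2} as the engine. The first and conceptually important point is to select the right variational problem: the flow (\ref{1.1}) conserves the $L^2$-mass of the data, not the auxiliary mass $m$ attached to $\psi$, so the relevant problem for $v=Q_\omega$ is $d^I_{\tilde m}$ taken at its \emph{own} mass $\tilde m:=M(v)=M(Q_\omega)$. By Proposition \ref{p2.18} and Lemma \ref{p2.17} one has $\tilde m\ge\frac{4}{3\sqrt3}\rho$, so Theorem \ref{t5.2} is available at level $\tilde m$. Before using it I would record that $v$ itself is the unique positive minimizer of $d^I_{\tilde m}$: this is the identification furnished by Theorem \ref{t4.14} together with the uniqueness of Theorem \ref{t4.1} and the classification of Proposition \ref{p2.19}, which in addition pins the energy ground state to the regime $\beta(\omega)=\|v\|_{L^6}^6/\|\nabla v\|_{L^2}^2\ge\frac13$. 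Granting this, Theorem \ref{t5.2} says every minimizing sequence of $d^I_{\tilde m}$ converges in $H^1$, up to a phase and a translation, to the orbit $\{e^{i\theta}v(\cdot+y)\}$.

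I would then argue by contradiction in the sense of Definition \ref{d5.3} and Remark \ref{r5.4}. If $v$ were not orbitally stable there would be $\varepsilon_0>0$ (which I take small) and data $\varphi_0^n\to v$ in $H^1$ whose global solutions $\varphi^n$ (Proposition \ref{p2.1}) leave the $\varepsilon_0$-neighbourhood of the orbit; choosing $t_n$ to be the first exit time, $w_n:=\varphi^n(t_n)$ satisfies $\inf_{\theta,y}\|w_n-e^{i\theta}v(\cdot+y)\|_{H^1}=\varepsilon_0$ while remaining $\varepsilon_0$-close to the orbit. Conservation of mass and energy (Proposition \ref{p2.1}) together with $\varphi_0^n\to v$ give $M(w_n)\to\tilde m$ and $E(w_n)\to E(v)=d^I_{\tilde m}$, and $\{w_n\}$ is bounded in $H^1$.

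The difficulty is that $\{w_n\}$ does not lie on the Pohozaev manifold $\{I=0\}$, since the functional $I$ of (\ref{1.30}) is not conserved by (\ref{1.1}). I would remove this in two harmless steps. First renormalize the mass by a scalar $c_n\to1$, so that $\tilde w_n:=c_nw_n$ obeys $M(\tilde w_n)=\tilde m$ and still $E(\tilde w_n)\to d^I_{\tilde m}$. Then apply the $L^2$-invariant dilation $u\mapsto u_\lambda=\lambda^{3/2}u(\lambda\cdot)$ of Proposition \ref{p3.2}: because $w_n$ stays $\varepsilon_0$-close to the orbit of $v$, the quantity $\int(\frac13|\nabla\tilde w_n|^2-\frac14|\tilde w_n|^4)\,dx$ is near its value $-\frac13\|v\|_{L^6}^6<0$ at $v$, so for $\varepsilon_0$ small there is a unique $\lambda_n$ with $I\big((\tilde w_n)_{\lambda_n}\big)=0$. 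This dilation preserves the mass $\tilde m$ and, by Proposition \ref{p3.2}, minimizes the energy along the ray, whence $d^I_{\tilde m}\le E((\tilde w_n)_{\lambda_n})\le E(\tilde w_n)\to d^I_{\tilde m}$. Therefore $\{(\tilde w_n)_{\lambda_n}\}$ is a genuine minimizing sequence of $d^I_{\tilde m}$, so by the first paragraph it converges, along a subsequence, to some $e^{i\theta_0}v(\cdot+y_0)$ in $H^1$.

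The main obstacle is the closing step: proving $\lambda_n\to1$, so that $\tilde w_n$ — hence $w_n=c_n^{-1}\tilde w_n$ — converges to the orbit of $v$, contradicting $\inf_{\theta,y}\|w_n-e^{i\theta}v(\cdot+y)\|_{H^1}=\varepsilon_0>0$. Writing $g_n(\lambda)=E((\tilde w_n)_\lambda)=\frac{\lambda^2}{2}\|\nabla\tilde w_n\|_{L^2}^2-\frac{\lambda^3}{4}\|\tilde w_n\|_{L^4}^4+\frac{\lambda^6}{6}\|\tilde w_n\|_{L^6}^6$, both $g_n(1)=E(\tilde w_n)$ and $g_n(\lambda_n)$ tend to $d^I_{\tilde m}$ while $\lambda_n$ is the minimizing dilation, so $\lambda=1$ is asymptotically optimal. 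Passing to the limit via $(\tilde w_n)_{\lambda_n}\to v$ reduces matters to the scalar function $h(\mu)=\frac{\mu^2}{2}\|\nabla v\|_{L^2}^2-\frac{\mu^3}{4}\|v\|_{L^4}^4+\frac{\mu^6}{6}\|v\|_{L^6}^6$, for which $h'(1)=3I(v)=0$ and, using $I(v)=0$ to eliminate $\|v\|_{L^4}^4$, $h''(1)=3\|v\|_{L^6}^6-\|\nabla v\|_{L^2}^2>0$ exactly because $\beta(\omega)>\frac13$. This strict non-degeneracy — the heart of the proof — forces $\lambda_n\to1$ and closes the contradiction. I expect the borderline case $\beta(\omega)=\frac13$ (equivalently $\mu=0$, where $v=\psi=Q_\omega$ and $\tilde m\ge\rho$) to need separate attention, since then $h''(1)=0$; there, however, $v$ is directly an energy minimizer at fixed mass, so Theorem \ref{t5.2} applies to $\{w_n\}$ itself with no dilation, and the sixth-order growth of $h$ at $\mu=1$ still yields $\lambda_n\to1$.
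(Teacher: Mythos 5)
Your overall skeleton (contradiction via the conservation laws, renormalization and dilation onto the Pohozaev manifold $\{I=0\}$, then the compactness of minimizing sequences from Theorem \ref{t5.2}) is the same Cazenave--Lions argument the paper runs in Section \ref{sec10}, and your closing step is in one respect more careful than the paper's: the verification that the dilation parameters satisfy $\lambda_n\to 1$, via the non-degeneracy $h''(1)=3\|v\|_{L^6(\mathbb{R}^3)}^6-\|\nabla v\|_{L^2}^2>0$ (equivalently $\beta(\omega)>\frac13$, which Proposition \ref{p2.19} does guarantee), is a point the paper passes over silently in going from (\ref{5.36}) to (\ref{5.37}). However, the foundation of your argument has a genuine gap: the claim that $v$ is the unique positive minimizer of $d^I_{\tilde m}$ at its own mass $\tilde m=M(v)$. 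None of the results you cite delivers this. Theorem \ref{t4.14} is a statement about masses only: it says that the ground state attached via (\ref{3.2}) to the minimizer of $d^I_{M(R_{\omega_1})}$ has the \emph{same mass} as $Q_{\omega_1}$; it does not say that $Q_{\omega_1}$ itself achieves the infimum at the mass $M(Q_{\omega_1})$. In the regime $\frac{4}{3\sqrt3}\rho\le m<\rho$ the multiplier $\mu$ can be strictly positive (this is precisely why the paper needs the change-of-variables machinery at all); then $v\ne\psi$, $\tilde m=M(v)>m$, and the unique positive minimizer of $d^I_{\tilde m}$ given by Theorem \ref{t4.1} is some function $\tilde\psi$ which you cannot identify with $v$ at this stage: that identification would require injectivity of $\omega\mapsto M(Q_\omega)$ on the relevant frequency set, or the classification $\mathcal{B}=[\omega_*,\frac{3}{16})$, which are proved only in Sections 7--9 and rest on Theorem \ref{t5.5} itself. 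So your opening claim is circular. Without it you do not know $E(v)=d^I_{\tilde m}$; your sequence $(\tilde w_n)_{\lambda_n}$ then satisfies only $E((\tilde w_n)_{\lambda_n})\to E(v)\ge d^I_{\tilde m}$, is not known to be minimizing, and Theorem \ref{t5.2} cannot be invoked. (When $m\ge\rho$ one has $\mu=0$, $v=\psi$, $\tilde m=m$, and your argument does close; the gap is confined to, but fatal in, the case $\mu>0$.)

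The paper avoids this issue by never asking $v$ to minimize anything at its own mass: it transports the flow data into the $\psi$-variables by the fixed change of variables $u\mapsto au(\lambda\cdot)$ of (\ref{3.2}) and applies Theorem \ref{t5.2} at the mass $m=M(\psi)$, where $\psi$ is the known unique minimizer; stability of $v$ is then read off by inverting the (fixed, bicontinuous) change of variables. You correctly sensed that this transfer is the delicate point --- for $\mu>0$ the energy does not transform homogeneously under $u\mapsto au(\lambda\cdot)$, so the step (\ref{5.31})$\Rightarrow$(\ref{5.33}) needs more than conservation alone --- but your repair replaces it by an assumption ($v$ minimizes at mass $M(v)$) that is strictly stronger than the theorem being proved. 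A minor further point: the ``borderline case $\beta(\omega)=\frac13$'' you set aside never occurs, since Proposition \ref{p2.19} gives $\beta(\omega)>\frac13$ strictly for the frequencies arising here; and your identification of it with $\mu=0$ is backwards, since $\mu=0$ corresponds to $m\ge\rho$, where $v=\psi$ and still $\beta(\omega)>\frac13$.
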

\begin{proof}
By Proposition \ref{p2.16} and Theorem \ref{t4.1}, for $m\geq \frac{4}{3\sqrt{3}}\rho$, the variational problem (\ref{1.31}) possesses a unique positive minimizer $\psi$ up to a translation. Then $\psi$ satisfies the Euler-Lagrange equation (\ref{3.1}) with the Lagrange multipliers $\mu\geq 0$ and $\nu>0$. Under the change of variables $\psi(x)=av(\lambda x)$ with (\ref{3.2}), $v=v(x)$ is the ground state of (\ref{1.2}). By Proposition \ref{p2.1} (refer to \cite{Z2006}), for arbitrary $\varphi_{0}\in H^{1}$, (\ref{1.1}) with $\varphi(0,x)=\varphi_{0}(x)$ possesses a unique global solution $\varphi(t,x)\in C(\mathbb{R}, H^{1}(\mathbb{R}^{3}))$. In addition, $\varphi(t,x)$ satisfies the mass conservation $M(\varphi(t,\cdot))=M(\varphi_{0}(\cdot))$ and the energy conservation $E(\varphi(t,\cdot))=E(\varphi_{0}(\cdot))$ for all $t\in\mathbb{R}$. Now arguing by contradiction.

If the conclusion of Theorem \ref{t5.5} does not hold, then there exist $\varepsilon>0$, a sequence $(\varphi_{0}^{n})_{n\in N^{+}}$ such that
\begin{equation}\label{5.25}
\mathop{\inf}\limits_{\{\theta\in\mathbb{R},y\in\mathbb{R}^3\}}\lvert\lvert \varphi_0^n-e^{i\theta}v(\cdot+y)\rvert \rvert_{H^1}<\frac{1}{n},
\end{equation}
and a sequence $(t_n)_{n\in \mathbb{N}^+}$ such that
\begin{equation}\label{5.26}
\mathop{\inf}\limits_{\{\theta\in\mathbb{R},y\in\mathbb{R}^3\}}\lvert\lvert \varphi_n(t_n,\cdot)-e^{i\theta}v(\cdot+y)\rvert \rvert_{H^1}\geq\varepsilon,
\end{equation}
where $\varphi_n$ denotes the global solution of (\ref{1.1}) with $\varphi(0,x)=\varphi_0^n$. From (\ref{5.25}) it yields that for some $\theta\in\mathbb{R}$, $y\in\mathbb{R}^3$,
\begin{equation}\label{5.27}
\varphi^n_0\rightarrow e^{i\theta}v(\cdot+y), \quad in \quad H^1(\mathbb{R}^3), \quad n\rightarrow\infty.
\end{equation}
Thus we have that
\begin{equation}\label{5.28}
\int\lvert \varphi^n_0\rvert^2dx \rightarrow \int  v^2dx,\;\; \quad E(\varphi^n_0)\rightarrow E(v),\;\; n\rightarrow\infty.
\end{equation}
Additionly we have that
\begin{equation}\label{5.29}
I(v)=0.
\end{equation}
Since for $n\in N^{+}$ ,
\begin{equation}\label{5.30}
\int\lvert \varphi_{n}(t_{n},\cdot)\lvert^{2}dx=\int \lvert \varphi_{0}^{n}\lvert^{2}dx,\;\;\; E(\varphi_{n}(t_{n},\cdot))=E(\varphi_{0}^{n}),
\end{equation}
from (\ref{5.28}) we have that
\begin{equation}\label{5.31}
\int\lvert \varphi_{n}(t_{n},\cdot)\lvert^{2}dx\rightarrow \int v^{2}dx,\;\;\; E(\varphi_{n}(t_{n},\cdot))\rightarrow E(v),\;\; n\rightarrow\infty.
\end{equation}
Now we conduct the change of varibles
\begin{equation}\label{5.32}
\phi_{n}(x)=a\varphi_{n}(t_{n},\lambda x)(n=1,2,\cdot\cdot\cdot),\;\;\; \psi(x)=av(\lambda x),
\end{equation}
with (\ref{3.2}). By (\ref{5.31}) we have that
\begin{equation}\label{5.33}
\int\lvert \phi_{n}\lvert^{2}dx\rightarrow \int \psi^{2}dx,\;\;\; E(\phi_{n})\rightarrow E(\psi),\;\; n\rightarrow\infty.
\end{equation}
We conduct the scaling
\begin{equation}\label{5.34}
\phi_{n}^{\lambda_{n}}=\lambda_{n}^{\frac{3}{2}}\phi_{n}(\lambda_{n}x)\quad with \quad \lambda_{n}>0.
\end{equation}
Then by Proposition \ref{3.2}, there exist $\lambda_{n}>0(n=1,2,\cdot\cdot\cdot)$ such that for $n=1,2,\cdot\cdot\cdot$,
\begin{equation}\label{5.35}
I(\phi_{n}^{\lambda_{n}})=0,\;\;\; \int \lvert \phi_{n}\lvert^{2}dx= \int \lvert \phi_{n}^{\lambda_{n}}\lvert^{2}dx,\;\;\; E(\phi_{n}^{\lambda_{n}})\leq E(\phi_{n}).
\end{equation}
Thus $(\phi_{n}^{\lambda_{n}})_{n\in N^{+}}$ is a minimizing sequence of (\ref{1.31}). Therefore
\begin{equation}\label{5.36}
\lim_{n\rightarrow\infty}\lvert\lvert \phi_{n}^{\lambda_{n}}-e^{i\theta}\psi(\cdot+y)\rvert \rvert_{H^1}=0.
\end{equation}
This yields that from (\ref{5.32})
\begin{equation}\label{5.37}
\lim_{n\rightarrow\infty}\lvert\lvert \varphi_{n}(t_{n},\cdot)-e^{i\theta}v(\cdot+y)\rvert \rvert_{H^1}=0.
\end{equation}
This is contradictory with (\ref{5.26}).

Theorem \ref{t5.5} is proved.

\end{proof}

\begin{theorem}\label{t5.6}
Arbitrary $\omega\in \mathcal{B}$, one has that $Q_{\omega}$ is orbitally stable. Especially $Q_{\omega^*}$ is orbitally stable.
\end{theorem}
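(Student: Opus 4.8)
The plan is to obtain Theorem \ref{t5.6} directly from the stability result for energy ground states, Theorem \ref{t5.5}, combined with the definition of the set $\mathcal{B}$. First I would unwind what membership $\omega\in\mathcal{B}$ means: by the construction preceding Theorem \ref{t4.16}, $\omega\in\mathcal{B}$ precisely when there is a mass $m\geq\frac{4}{3\sqrt{3}}\rho$ whose unique positive energy minimizer $\psi$ of (\ref{1.31}) produces, via the change of variables $\psi(x)=av(\lambda x)$ with (\ref{3.2}), the energy ground state $v=Q_{\omega}$ up to a translation. Thus every frequency in $\mathcal{B}$ is genuinely attached to an energy minimizer at some admissible mass.

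Given such an $\omega$ and the corresponding mass $m$, I would then invoke Theorem \ref{t5.5} for this $m$: it asserts that the energy ground state $v(x)$ recovered from the unique minimizer $\psi$ is orbitally stable in the sense of Definition \ref{d5.3}. Since this $v$ coincides with $Q_{\omega}$ up to a translation, and since orbital stability is insensitive to the translation and phase symmetries recorded in Remark \ref{r5.4}, the orbital stability of $v$ transfers verbatim to $Q_{\omega}$. This settles the first assertion for an arbitrary $\omega\in\mathcal{B}$.

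For the distinguished frequency $\omega_{*}$, I would simply appeal to Theorem \ref{t4.16}, which states that $\omega_{*}\in\mathcal{B}$. The general statement just established then applies directly and yields the orbital stability of $Q_{\omega_{*}}$.

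I do not anticipate a substantial obstacle here, because all of the analytic labour—the profile decomposition, the rescaling onto the constraint $I(u)=0$, and the extraction of a strongly $H^{1}$-convergent minimizing sequence—has already been carried out inside the proof of Theorem \ref{t5.5}. The only point demanding mild care is the bookkeeping of the scaling (\ref{3.2}): one must confirm that orbital stability of the rescaled ground state $v$ is equivalent to orbital stability of $Q_{\omega}$. This holds because $a$ and $\lambda$ are fixed positive constants, so the associated rescaling is an $H^{1}$-homeomorphism compatible with the symmetry group of (\ref{1.1}), exactly as encapsulated in Remark \ref{r5.4}.
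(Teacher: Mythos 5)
Your proposal is correct and follows essentially the same route as the paper's own proof: membership in $\mathcal{B}$ means $Q_{\omega}$ is an energy ground state, Theorem \ref{t5.5} gives orbital stability of energy ground states, and Theorem \ref{t4.16} supplies $\omega_{\ast}\in\mathcal{B}$ for the special case. The one small clarification is that no rescaling bookkeeping is actually needed at this stage, since the energy ground state $v$ already coincides with $Q_{\omega}$ up to a translation (the change of variables (\ref{3.2}) relates $\psi$ to $v$, not $v$ to $Q_{\omega}$), so Theorem \ref{t5.5} applies verbatim.
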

\begin{proof}
According to Theorem \ref{t5.5}, if $\omega\in \mathcal{B}$, then $Q_{\omega}$ is the energy ground state. Therefore $Q_{\omega}$ is orbitally stable. By Theorem \ref{t4.16}, $\omega_{\ast}\in \mathcal{B}$, thus $Q_{\omega_{\ast}}$ is orbitally stable.

This proves Theorem \ref{t5.6}.
\end{proof}

\section{Spectral approach }\label{sec11}
Let $\omega\in(0,\frac{3}{16})$ and $Q_\omega(x)$ be the unique positive solution of (\ref{1.2}). Then one has that
\begin{equation}\label{6.1}
-\Delta Q_\omega+\omega Q_\omega-Q_\omega^3+Q_\omega^5=0, \quad Q_\omega\in H^1(\mathbb{R}^3).
\end{equation}
The linearized operator of (\ref{6.1}) around $Q_\omega$ is that
\begin{equation}\label{6.2}
L_\omega=-\Delta+\omega-3Q^2_\omega+5Q^4_\omega.
\end{equation}
It is clear that
\begin{equation}\label{6.3}
L_\omega=E^{\prime\prime}(Q_\omega)+\frac{1}{2}\omega M^{\prime\prime}(Q_\omega).
\end{equation}

\begin{proposition}\label{p6.1}
The operator $L_\omega$ has one negative simple eigenvalue and has its kernel spanned by $iQ_\omega$. Moreover the positive spectrum of $L_\omega$ is bounded away from zero.
\end{proposition}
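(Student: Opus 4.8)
The plan is to read $L_\omega$ through the identity (\ref{6.3}), which presents it as the Hessian at $Q_\omega$ of the action $E+\tfrac12\omega M$ on the complex space $H^1$. Writing a perturbation as $h=v+iw$ with $v,w$ real and using that $Q_\omega$ is real, this Hessian diagonalizes into real and imaginary blocks,
\[
\langle L_\omega h,h\rangle=\langle L_+v,v\rangle+\langle L_-w,w\rangle,
\]
where $L_+=-\Delta+\omega-3Q_\omega^2+5Q_\omega^4$ is exactly the operator in (\ref{6.2}) and $L_-=-\Delta+\omega-Q_\omega^2+Q_\omega^4$. The whole statement then reduces to three facts: both $L_\pm$ have essential spectrum bounded away from $0$; $L_-\ge 0$ with kernel exactly $\mathrm{span}\{Q_\omega\}$; and $L_+$ has exactly one negative eigenvalue and no kernel in the radial class. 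I would carry out the analysis in the radial space $H^1_x(\mathbb{R}^3)$ of Proposition~\ref{p2.19}, where the translation modes $\partial_{x_j}Q_\omega\in\ker L_+$ are absent, so that the only surviving kernel direction is $iQ_\omega$.

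For the essential spectrum, the exponential decay of $Q_\omega$ in Proposition~\ref{p2.2}(II) makes the potentials $-3Q_\omega^2+5Q_\omega^4$ and $-Q_\omega^2+Q_\omega^4$ relatively compact perturbations of $-\Delta+\omega$; Weyl's theorem then gives $\sigma_{\mathrm{ess}}(L_\pm)=[\omega,\infty)\subset(0,\infty)$, so at most finitely many eigenvalues lie below $\omega$ and the positive spectrum is bounded away from $0$. For $L_-$, substituting (\ref{6.1}) into $L_-Q_\omega=(-\Delta+\omega)Q_\omega-Q_\omega^3+Q_\omega^5$ shows $L_-Q_\omega=0$. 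Since $Q_\omega>0$, a positive element of the kernel of a Schr\"{o}dinger operator must be its ground state (strong maximum principle), so $0=\min\sigma(L_-)$, hence $L_-\ge 0$, the kernel is simple and equal to $\mathrm{span}\{Q_\omega\}$, and this produces the kernel direction $iQ_\omega$ of $L_\omega$ together with nonnegativity on imaginary perturbations.

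The heart of the proof, and the step I expect to be the main obstacle, is the spectral count for $L_+$. A direct computation from (\ref{6.1}) gives $L_+Q_\omega=-2Q_\omega^3+4Q_\omega^5$, so $Q_\omega$ is neither in the kernel nor a convenient test vector and one must argue structurally. First, the ground state of $L_+$ is radial and strictly positive (Perron--Frobenius), while $\partial_{x_j}Q_\omega$ is a sign-changing element of $\ker L_+$; hence the ground-state eigenvalue is strictly below $0$, giving at least one negative eigenvalue. Next, decomposing $L_+$ into spherical sectors $A_\ell=A_0+\tfrac{\ell(\ell+1)}{r^2}$, the radial profile $\partial_rQ_\omega$ is the nodeless ground state of $A_1$ at eigenvalue $0$, so $A_1\ge 0$ and therefore $A_\ell\ge 0$ for all $\ell\ge 1$; thus every negative eigenvalue of $L_+$ lives in the radial sector $A_0$. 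Nondegeneracy of $Q_\omega$, which I would derive from its uniqueness and radial symmetry (Serrin--Tang, Gidas--Ni--Nirenberg) by a standard ODE argument, shows that $\ker L_+$ is spanned by the $\ell=1$ translation modes, so $A_0$ has trivial kernel. Finally, a Sturm oscillation and nodal-counting argument in the radial variable yields that $A_0$, hence $L_+$, has \emph{exactly} one negative eigenvalue, which is simple. Combining the blocks, $L_\omega=L_+\oplus L_-$ on $H^1_x$ has a single simple negative eigenvalue coming from $L_+$, kernel exactly $\mathrm{span}\{iQ_\omega\}$ coming from $L_-$, and positive spectrum bounded away from $0$. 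The genuinely delicate inputs are the nondegeneracy and the rigorous realization of the oscillation count in three dimensions; the remaining pieces are routine.
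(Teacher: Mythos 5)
Your reading of the statement and your overall framework are the standard ones, and they are genuinely different from the paper's proof. You diagonalize the Hessian (\ref{6.3}) into the blocks $L_+=-\Delta+\omega-3Q_\omega^2+5Q_\omega^4$ and $L_-=-\Delta+\omega-Q_\omega^2+Q_\omega^4$, get the kernel direction $iQ_\omega$ from $L_-Q_\omega=0$ via Perron--Frobenius, the negative direction from the ground state of $L_+$, and the essential spectrum from Weyl's theorem; you also correctly observe that the kernel statement can only hold on the radial subspace, since the translation modes $\partial_{x_j}Q_\omega$ lie in $\ker L_+$ on all of $H^1(\mathbb{R}^3)$ --- a point the paper never addresses. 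The paper instead argues directly on the scalar operator (\ref{6.2}): it computes $L_\omega Q_\omega=-2Q_\omega^3+4Q_\omega^5$ from (\ref{6.1}), treats $Q_\omega$ itself as the negative eigenfunction with ``eigenvalue'' $\lambda_\ast=\int(-2Q_\omega^4+4Q_\omega^6)\,dx\,/\int Q_\omega^2\,dx$, deduces the kernel claim from uniqueness of $Q_\omega$, and controls the positive spectrum by Kato's theorem \cite{K1959} together with Weyl's theorem. Your $L_\pm$ decomposition, your $L_-$ analysis, and your essential-spectrum step are complete and correct as written, and they are the structurally sound way to verify Assumption 3 of \cite{GSS1987}.

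The two steps you yourself flag as delicate are, however, genuine gaps rather than routine verifications. First, non-degeneracy: uniqueness of the positive solution (Serrin--Tang) does \emph{not} yield $\ker A_0=\{0\}$ by ``a standard ODE argument''; uniqueness and non-degeneracy are logically independent properties, and for double-power nonlinearities non-degeneracy is a substantial theorem, proved by Lewin and Rota Nodari \cite{LR2020} (already in this paper's bibliography). You should cite it, not promise to re-derive it. Second, the count ``exactly one negative eigenvalue of $A_0$'': a Sturm oscillation argument on the half-line requires counting the zeros of the zero-energy solution of $A_0u=0$ regular at the origin, and nothing in your sketch identifies that solution or bounds its oscillation --- for the double-power nonlinearity there is no explicit zero mode playing the role of the scaling direction in the pure-power case. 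Note also that the comparison you set up goes the wrong way: $A_0\le A_1$ bounds $\lambda_2(A_0)$ from \emph{above} by $\lambda_2(A_1)$, so positivity of $A_1$ cannot force $\lambda_2(A_0)\ge 0$. The standard repair is variational rather than ODE-theoretic: by Serrin--Tang uniqueness, $Q_\omega$ coincides with the Berestycki--Lions least-action solution, which admits a mountain-pass characterization, and mountain-pass critical points of a $C^2$ functional have Morse index at most one; equivalently $L_+\ge 0$ on a codimension-one subspace, so $L_+$ has at most one negative eigenvalue. Combined with your correct ``at least one'' argument and simplicity of the bottom eigenvalue (Perron--Frobenius), this gives exactly one simple negative eigenvalue. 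With these two repairs --- citing \cite{LR2020} for non-degeneracy and replacing the oscillation count by the Morse-index bound --- your proof closes.
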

\begin{proof}
Since $\omega\in(0,\frac{3}{16})$, by Proposition \ref{p2.2}, there exists a unique positive radial function
$Q_\omega(x)$ satisfying (\ref{6.1}). Now suppose that $\lambda\in\mathbb{R}$ satisfies $L_\omega Q_\omega=\lambda Q_\omega$, that is
\begin{equation}\label{6.4}
-\Delta Q_\omega+\omega Q_\omega-3Q_\omega^3+5Q_\omega^5=\lambda Q_\omega.
\end{equation}
By (\ref{6.1}) we have that
\begin{equation}\label{6.5}
-2Q_\omega^3+4Q_\omega^5=\lambda Q_\omega.
\end{equation}
Thus we can uniquely determine $\lambda$ as follows
\begin{equation}\label{6.6}
\lambda=\lambda_*=(\int -2Q^4_\omega+4Q^6_\omega dx)/ \int Q^2_\omega dx.
\end{equation}
On the other hand, from (\ref{6.5}) we have that
\begin{equation}\label{6.7}
\lambda=2Q_\omega^2(-1+2Q^2_\omega).
\end{equation}
According to Proposition \ref{p2.2},
\begin{equation}\label{6.8}
\lim_{\lvert x\rvert\rightarrow\infty}Q_\omega(x)=0.
\end{equation}
Then (\ref{6.7}) and (\ref{6.8}) yield that $\lambda<0$. Thus $\lambda_*<0$. Therefore we get that $L_\omega$ has one negative simple eigenvalue $\lambda_*$. In addition
\begin{equation}\label{6.8a}
L_\omega Q_\omega=\lambda_*Q_\omega.
\end{equation}
By the uniqueness of $Q_\omega$ we get that the kernel is spanned by $Q_\omega$. It follows that the kernel is spanned by $iQ_\omega$.

Now suppose that $\lambda>0$ and $u\in H^1(\mathbb{R}^3)\backslash \{0\}$ satisfying $L_\omega u=\lambda u$, that is
\begin{equation}\label{6.9}
-\Delta u+\omega u-3Q_\omega^2 u+5Q_\omega^4 u=\lambda u.
\end{equation}
By Proposition \ref{p2.2}, it follows that
\begin{equation}\label{6.10}
-3Q_\omega^2 +5Q_\omega^4 :=g(x)=o(\lvert x\rvert^{-1}).
\end{equation}
From Kato \cite{K1959}, $-\Delta+g(x)$ has no positive eigenvalues. Thus (\ref{6.9}) derives that $\lambda\leq\omega$. By Weyl's theorem on the essential spectrum, the rest of the spectrum of $L_\omega$ is bounded away from zero (see \cite{RS1978}).

This proves Proposition \ref{p6.1}.
\end{proof}

By Proposition \ref{p6.1}, $L_\omega$ with $T^{\prime}(0)=i$ satisfies Assumption 3 in Grillakis-Shatah-Strauss \cite{GSS1987} for $\omega\in(0,\frac{3}{16})$. With $J=-i$, $X=H^1(\mathbb{R}^3)$
and  $E$ defined as (\ref{1.7}), by Proposition \ref{p2.1} and Proposition \ref{p2.2}, (\ref{1.1}) satisfies
Assumption 1 and Assumption 2 in Grillakis-Shatah-Strauss \cite{GSS1987} for $\omega\in(0,\frac{3}{16})$.
Thus in terms of \cite{GSS1987}, we have the following propositions.
\begin{proposition}\label{p6.2}
Let $\omega\in(0,\frac{3}{16})$ and $Q_\omega$ be the ground state of (\ref{1.2}). If $Q_\omega$ satisfies
$$\frac{d}{d\omega}\int Q^2_\omega dx>0,$$
then $Q_\omega$ is a positive minimizer of the variational problem
\begin{equation*}
d_\omega=\mathop{\inf}\limits_ {\{u\in H^1(\mathbb{R}^3),\  \int\lvert u\rvert^2dx=\int Q_\omega^2dx\}
}
E(u).
\end{equation*}
\end{proposition}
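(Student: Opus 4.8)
The plan is to run the classical spectral (Grillakis–Shatah–Strauss / Vakhitov–Kolokolov) argument, working in the radial class so that translation directions are killed. First I would record that $Q_\omega$ is a critical point of $E$ constrained to the mass sphere $\{M(u)=M(Q_\omega)\}$: the constrained Euler–Lagrange equation is exactly $E'(Q_\omega)+\tfrac{\omega}{2}M'(Q_\omega)=0$, which is precisely (\ref{6.1}). Consequently the second variation that governs minimality is the action Hessian $L_\omega=E''(Q_\omega)+\tfrac12\omega M''(Q_\omega)$ of (\ref{6.3}), whose spectrum is described by Proposition \ref{p6.1}: one simple negative eigenvalue, kernel spanned by $iQ_\omega$ (the phase direction), and the rest bounded away from $0$. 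Since $M'(Q_\omega)=2Q_\omega$, the normal to the mass constraint is $Q_\omega$ itself, so the minimality question reduces to the sign of the quadratic form $\langle L_\omega v,v\rangle$ on real perturbations $v$ with $\langle v,Q_\omega\rangle=0$.

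The analytic heart is to convert the hypothesis $\tfrac{d}{d\omega}\int Q_\omega^2\,dx>0$ into coercivity of this form. The key identity comes from differentiating (\ref{6.1}) in $\omega$, which gives $L_\omega(\partial_\omega Q_\omega)=-Q_\omega$; hence on the orthogonal complement of the kernel $L_\omega^{-1}Q_\omega=-\partial_\omega Q_\omega$, and therefore
\begin{equation*}
\langle L_\omega^{-1}Q_\omega,Q_\omega\rangle=-\langle\partial_\omega Q_\omega,Q_\omega\rangle=-\tfrac12\tfrac{d}{d\omega}\int Q_\omega^2\,dx<0 .
\end{equation*}
With exactly one negative eigenvalue available and $\langle L_\omega^{-1}Q_\omega,Q_\omega\rangle<0$, the standard constrained-coercivity lemma yields a $\delta>0$ with $\langle L_\omega v,v\rangle\geq\delta\lVert v\rVert_{H^1}^2$ for all $v$ orthogonal to both $Q_\omega$ and $iQ_\omega$. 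A Taylor expansion $E(u)-E(Q_\omega)=\tfrac12\langle L_\omega v,v\rangle+o(\lVert v\rVert^2)$ along the mass sphere then shows that $Q_\omega$ is a \emph{strict local} minimizer of $E$ under the fixed-mass constraint, modulo phase (and translation).

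The step I expect to be the main obstacle is promoting this local minimality to the global identity $E(Q_\omega)=d_\omega$. One inequality is free: $Q_\omega$ satisfies the Pohozaev relation $I(Q_\omega)=0$ (Proposition \ref{p2.2}) and $M(Q_\omega)=m$, so it is an admissible competitor and $E(Q_\omega)\geq d_m=d_\omega$; the content is the reverse bound. I would use that for $m=M(Q_\omega)\geq\rho$ the infimum is attained at a positive ground state (Proposition \ref{p2.5}) which is the \emph{unique} positive minimizer (Theorem \ref{t4.13}). The delicate point is that a priori the minimizer could be a different ground state $Q_{\omega'}$ of the same mass, and it is exactly the slope hypothesis that should select the minimizing branch: the global minimizer is itself a constrained local minimizer, so the same coercivity characterization forces $\langle L_{\omega'}^{-1}Q_{\omega'},Q_{\omega'}\rangle\leq 0$, i.e. $\tfrac{d}{d\omega}M(Q_{\omega'})\geq 0$. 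Closing the argument then requires comparing the soliton energies along the mass level set to conclude $\omega'=\omega$ — that among ground states of equal mass the one with positive slope carries the least energy is the genuine difficulty, and I anticipate needing either the cross-constrained uniqueness of Section \ref{sec4} or a direct monotonicity of $E$ along the soliton curve (via $\tfrac{d}{d\omega}E(Q_\omega)=-\tfrac{\omega}{2}\tfrac{d}{d\omega}M(Q_\omega)$ from Proposition \ref{p2.2a}) to carry it through.
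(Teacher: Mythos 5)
Your local analysis is the standard Vakhitov--Kolokolov/Grillakis--Shatah--Strauss coercivity argument and, as far as it goes, it is sound: differentiating (\ref{6.1}) gives $L_\omega(\partial_\omega Q_\omega)=-Q_\omega$, hence $\langle L_\omega^{-1}Q_\omega,Q_\omega\rangle=-\tfrac12\tfrac{d}{d\omega}M(Q_\omega)<0$, and combined with Proposition \ref{p6.1} this yields positivity of $\langle L_\omega v,v\rangle$ on the admissible subspace and therefore a \emph{strict local} minimum of $E$ on the mass sphere (modulo symmetries, with the translation kernel disposed of by radiality). But that is not what Proposition \ref{p6.2} asserts. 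The proposition claims $E(Q_\omega)=d_\omega$, a \emph{global} statement over the whole constraint manifold, and you explicitly do not close that step --- you only name two candidate routes, and neither is available at this point of the paper. First, to compare $Q_\omega$ with a global minimizer you must know one exists: by Proposition \ref{p2.5} the infimum $d_m$ is attained only when $m\geq\rho$, and the hypothesis $\frac{d}{d\omega}M(Q_\omega)>0$ gives no control on whether $M(Q_\omega)\geq\rho$ at this stage; if $M(Q_\omega)<\rho$ then $d_m=0$ is not attained while $E(Q_\omega)>0$, so the comparison scheme degenerates entirely. Second, identifying the frequency $\omega'$ of a putative minimizer with $\omega$ requires injectivity of $\omega\mapsto M(Q_\omega)$ along the relevant branch, i.e.\ precisely the monotonicity statements of Theorem \ref{t7.5} and Theorem \ref{t8.2} --- but those are proved in Sections 7 and 8 \emph{by using} Proposition \ref{p6.2} (it is the engine of the contradiction arguments in Theorems \ref{t7.3} and \ref{t8.1}), so invoking them, or the uniqueness theory of Section \ref{sec4} in the way you suggest, would be circular. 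Local coercivity alone can never bridge this gap: it says nothing about competitors far from $Q_\omega$ on the mass sphere.

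For comparison, the paper does not attempt any of this. Proposition \ref{p6.2} is stated as a direct consequence of Grillakis--Shatah--Strauss \cite{GSS1987}: immediately before the statement, the paper verifies that Assumptions 1--3 of that reference hold (Proposition \ref{p6.1} for the spectral assumption, Propositions \ref{p2.1} and \ref{p2.2} for well-posedness and the bound-state branch), and then the proposition is asserted ``in terms of \cite{GSS1987}.'' So the paper's proof is a citation, and your attempt to make it self-contained founders exactly where one would expect: global minimality is a genuinely stronger conclusion than the local one that your spectral argument delivers, and nothing in your proposal supplies the missing global input.
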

\begin{proposition}\label{p6.3}
Let $\omega\in(0,\frac{3}{16})$ and $Q_\omega$ be the ground state of (\ref{1.2}). If $Q_\omega$ satisfies
$$\frac{d}{d\omega}M(Q_\omega)=\frac{d}{d\omega}\int Q^2_\omega dx>0,$$
then $Q_\omega$ is orbitally stable. If $Q_\omega$ satisfies that
$$\frac{d}{d\omega}M(Q_\omega)=\frac{d}{d\omega}\int Q^2_\omega dx<0,$$
then $Q_\omega$ is orbitally unstable.
\end{proposition}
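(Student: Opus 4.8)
The plan is to cast the statement as a direct application of the Grillakis--Shatah--Strauss (GSS) theory \cite{GSS1987}, whose structural hypotheses (Assumptions 1, 2 and 3) have already been verified above for (\ref{1.1}) with $J=-i$, $X=H^1(\mathbb{R}^3)$, $T'(0)=i$ and $E$ given by (\ref{1.7}). The conserved charge associated with the phase symmetry is the mass $M$, so the relevant action is $S_\omega(u):=E(u)+\frac{\omega}{2}M(u)$. By (\ref{6.1}) the ground state $Q_\omega$ is a critical point of $S_\omega$, i.e. $S_\omega'(Q_\omega)=0$, and by (\ref{6.3}) the linearized operator is exactly the Hessian $L_\omega=S_\omega''(Q_\omega)$.

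First I would introduce the scalar function $d(\omega):=S_\omega(Q_\omega)=E(Q_\omega)+\frac{\omega}{2}M(Q_\omega)$. Differentiating in $\omega$ and using $S_\omega'(Q_\omega)=0$ to discard the term involving $\frac{d}{d\omega}Q_\omega$, I obtain $d'(\omega)=\frac{1}{2}M(Q_\omega)$, whence $d''(\omega)=\frac{1}{2}\frac{d}{d\omega}M(Q_\omega)$. That $d$ is twice continuously differentiable follows from Proposition \ref{p2.2a}(II), which asserts that $\omega\mapsto M(Q_\omega)$ is $C^1$ (indeed real analytic). Thus the sign of $d''(\omega)$ coincides with the sign of $\frac{d}{d\omega}M(Q_\omega)$.

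Next I would feed in the spectral information from Proposition \ref{p6.1}: $L_\omega$ has exactly one negative simple eigenvalue, its kernel is spanned by the symmetry direction $iQ_\omega$, and its positive spectrum is bounded away from zero. In the GSS framework this is precisely the requirement that the number of negative eigenvalues of $L_\omega$ equals one, while the scalar $d''(\omega)$ plays the role of the reduced $1\times 1$ stability matrix. Applying the stability/instability dichotomy of \cite{GSS1987}: when $d''(\omega)>0$ the number of positive eigenvalues of $d''$ equals the negative count of $L_\omega$ (both equal one), so $Q_\omega$ is orbitally stable; when $d''(\omega)<0$ the difference of these two counts is odd, so $Q_\omega$ is orbitally unstable. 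Combined with $d''(\omega)=\frac{1}{2}\frac{d}{d\omega}M(Q_\omega)$, this yields exactly the two assertions of the proposition.

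The genuine analytic content---the eigenvalue count, the identification of the kernel, and the spectral gap---is carried entirely by Proposition \ref{p6.1}, so the remaining work is only the slope computation $d'(\omega)=\frac{1}{2}M(Q_\omega)$ together with the bookkeeping of eigenvalue counts in the GSS theorem. Accordingly, the main point to handle with care is the reduction of $d'(\omega)$, where one must justify that the $\omega$-derivative of $Q_\omega$ contributes nothing because $Q_\omega$ is a critical point of $S_\omega$; I would also record that the borderline case $\frac{d}{d\omega}M(Q_\omega)=0$ is deliberately excluded, as the GSS criterion gives no conclusion when $d''(\omega)=0$.
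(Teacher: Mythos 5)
Your proposal matches the paper's treatment: the paper establishes Proposition \ref{p6.3} by verifying Assumptions 1--3 of Grillakis--Shatah--Strauss (via Propositions \ref{p2.1}, \ref{p2.2} and \ref{p6.1}, with $J=-i$, $X=H^1(\mathbb{R}^3)$, $T'(0)=i$) and then quoting the GSS stability/instability theorem, which is exactly your route. The only difference is that you spell out the standard bookkeeping that the paper leaves implicit in the citation, namely $d(\omega)=E(Q_\omega)+\frac{\omega}{2}M(Q_\omega)$, $d'(\omega)=\frac{1}{2}M(Q_\omega)$, $d''(\omega)=\frac{1}{2}\frac{d}{d\omega}M(Q_\omega)$, and the eigenvalue count, all of which are correct.
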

\begin{theorem}\label{t6.4}
Let $\omega\in(0,\frac{3}{16})$ and $Q_\omega$ be the ground state of (\ref{1.2}). If $\omega_0\in(0,\frac{3}{16})$ satisfies that $$\frac{d}{d\omega}M(Q_\omega)\mid_{\omega=\omega_0}=0$$
and
$$\frac{d}{d\omega}M(Q_\omega)<0 \ for \ \omega\in(\omega_0-\delta_0,\omega_0+\delta_0)\backslash \{\omega_0\}$$
with some $\delta_0>0$, then the soliton $Q_{\omega_0}e^{i\omega_0t}$ of (\ref{1.1}) is orbitally unstable.
\end{theorem}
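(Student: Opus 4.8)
The plan is to run a Grillakis--Shatah--Strauss type instability argument in which the hypothesis on $\frac{d}{d\omega}M(Q_\omega)$ supplies the strict concavity that replaces the usual non-degeneracy condition of Proposition \ref{p6.3}. First I would fix the action functional $S_{\omega_0}(u)=E(u)+\frac{\omega_0}{2}M(u)$, so that \eqref{6.1} reads $S_{\omega_0}'(Q_{\omega_0})=0$ and, by \eqref{6.3}, $S_{\omega_0}''(Q_{\omega_0})=L_{\omega_0}$. Proposition \ref{p6.1} then furnishes the spectral picture: $L_{\omega_0}$ has exactly one negative simple eigenvalue, its kernel is generated by the symmetry directions, and its spectrum is otherwise bounded away from zero. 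I would introduce the scalar function
\[ d(\omega)=S_\omega(Q_\omega)=E(Q_\omega)+\tfrac{\omega}{2}M(Q_\omega), \]
and, using the envelope identity together with Proposition \ref{p2.2a}, record
\[ d'(\omega)=\tfrac12 M(Q_\omega),\qquad d''(\omega)=\tfrac12\,\tfrac{d}{d\omega}M(Q_\omega). \]
The hypothesis thus becomes $d''(\omega_0)=0$ together with $d''(\omega)<0$ for $0<|\omega-\omega_0|<\delta_0$; hence $d'$ is strictly decreasing across $\omega_0$ and $d$ is strictly concave there, lying strictly below its tangent line at $\omega_0$.

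Next I would isolate the degenerate direction. Differentiating \eqref{6.1} in $\omega$ gives $L_{\omega_0}\partial_\omega Q_{\omega_0}=-Q_{\omega_0}$, while Proposition \ref{p2.2a} yields $\langle 2Q_{\omega_0},\partial_\omega Q_{\omega_0}\rangle=\frac{d}{d\omega}M(Q_\omega)\big|_{\omega_0}=0$. Hence $\eta_0:=\partial_\omega Q_{\omega_0}$ is tangent to the mass surface $\{M=M(Q_{\omega_0})\}$ and satisfies $\langle L_{\omega_0}\eta_0,\eta_0\rangle=-\langle Q_{\omega_0},\eta_0\rangle=0$. Therefore the second variation of $E$ restricted to the mass surface, taken modulo the symmetry group, is merely positive semidefinite, with $\eta_0$ as its single zero mode. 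This is exactly where the criterion of Proposition \ref{p6.3} becomes inconclusive: the Vakhitov--Kolokolov quantity vanishes, so neither its stability nor its instability half applies, and this borderline degeneracy is the main obstacle.

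To overcome it I would build the instability directly, in the spirit of Grillakis--Shatah--Strauss, but with the slope function $d$ in place of its second derivative. Using the spectral gap of Proposition \ref{p6.1} and the implicit function theorem I would introduce modulation parameters (phase, translation, and frequency $\omega(t)$) to decompose any solution that remains near the orbit as $\varphi=e^{i\theta}[Q_{\omega}+w](\cdot+y)$ with $w$ symplectically orthogonal to the symmetry directions, reducing the dynamics to the collective coordinate along $\eta_0$. I would then form an antisymmetric virial-type functional $\mathcal{A}(u)=\operatorname{Im}\int\overline{\eta_0}\,u\,dx$ and, using the conservation of $E$ and $M$ from Proposition \ref{p2.1}, show that along this reduced flow $\frac{d}{dt}\mathcal{A}$ is bounded below by the concavity defect
\[ -\big[\,d(\omega)-d(\omega_0)-d'(\omega_0)(\omega-\omega_0)\,\big]>0, \]
which is strictly positive near $\omega_0$ precisely because $d$ is strictly concave. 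If $Q_{\omega_0}$ were orbitally stable, then $\omega(t)$ would stay near $\omega_0$ and $w$ small for all time, forcing $\mathcal{A}$ to increase without bound while staying bounded --- a contradiction.

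The hard part will be that, in contrast with the non-degenerate case of Proposition \ref{p6.3}, the vanishing $d''(\omega_0)=0$ kills the leading quadratic contribution, so the lower bound for $\frac{d}{dt}\mathcal{A}$ cannot be read off from the second variation alone; it must be extracted from the genuinely cubic information carried by the strict monotonicity of $M$ on the punctured neighborhood. Thus the delicate steps are the uniform-in-time control of the remainder $w$ and the verification that the cubic term does not degenerate, and it is here that the full hypothesis $\frac{d}{d\omega}M(Q_\omega)<0$ for $0<|\omega-\omega_0|<\delta_0$ --- rather than merely $\frac{d}{d\omega}M(Q_\omega)=0$ at $\omega_0$ --- is indispensable.
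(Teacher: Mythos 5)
Your opening reduction is exactly the paper's: the paper sets $d(\omega)=E(Q_\omega)+\omega M(Q_\omega)$ (your $S_\omega(Q_\omega)$, up to the harmless normalization of the mass term), observes that $d''(\omega)$ has the sign of $\frac{d}{d\omega}M(Q_\omega)$, and concludes from the two hypotheses that $d$ fails to be convex on $(\omega_0-\delta_0,\omega_0+\delta_0)$. But at that point the paper's proof is \emph{finished}: it invokes Corollary 5.1 of Grillakis--Shatah--Strauss \cite{GSS1987}, which asserts orbital instability precisely when $d$ is not convex in a neighborhood of $\omega_0$, with no non-degeneracy of $d''$ at $\omega_0$ required. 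Everything in your proposal after the concavity observation is an attempt to reprove that cited corollary by hand; you are not offering an alternative mechanism so much as re-opening the black box that the paper deliberately keeps closed, and the reconstruction is where your argument stops being a proof.

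Inside that reconstruction there is a concrete gap. Your lower bound for $\frac{d}{dt}\mathcal{A}$ is the concavity defect $-\left[d(\omega)-d(\omega_0)-d'(\omega_0)(\omega-\omega_0)\right]$ evaluated at the modulated frequency $\omega=\omega(t)$. This quantity is positive for $\omega(t)\neq\omega_0$ but tends to $0$ as $\omega(t)\to\omega_0$, and under your contradiction hypothesis (orbital stability) nothing prevents $\omega(t)$ from hovering at or returning to $\omega_0$; then $\frac{d}{dt}\mathcal{A}$ has no uniform positive lower bound and the conclusion that $\mathcal{A}$ increases without bound simply does not follow --- the degeneracy you set out to handle reappears at the last step. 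The GSS argument closes this by anchoring the estimate to \emph{fixed} initial data $u_0=Q_{\omega_1}$ with $\omega_1\neq\omega_0$: conservation of $E$ and $M$ converts the lower bound into the time-independent constant $d(\omega_1)+d'(\omega_1)(\omega_0-\omega_1)-d(\omega_0)>0$ (the tangent line at $\omega_1$ passes strictly above $d(\omega_0)$, by strict concavity), valid as long as the solution stays in the tube. You mention the conservation laws but never perform this anchoring; together with the unproved construction of a group-invariant $\mathcal{A}$ with bounded derivative satisfying your differential inequality, and the uniform control of the remainder $w$, these are exactly the contents of \cite{GSS1987}. So either quote Corollary 5.1 of \cite{GSS1987}, as the paper does --- your hypotheses verify it immediately --- or commit to carrying out the full GSS instability machinery, in which case the fixed-data bookkeeping above is the step you cannot skip.
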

\begin{proof}
 For $\omega\in(0,\frac{3}{16})$, put
\begin{equation*}
d(\omega)=E(Q_\omega)+\omega M(Q_\omega).
\end{equation*}
Then
\begin{equation*}
d^{\prime\prime}(\omega)=\frac{d}{d\omega}M(Q_\omega).
\end{equation*}
By the assumptions of Theorem \ref{t6.4}, it follows that $d(\omega)$  is not convex in $(\omega_0-\delta_0,\omega_0+\delta_0)$. From Corollary 5.1 in \cite{GSS1987}, we get that the soliton
$Q_{\omega_0}e^{i\omega_0t}$ of (\ref{1.1}) is orbitally unstable.

This proves Theorem \ref{t6.4}.
\end{proof}

\section{Instability of  solitons}\label{sec12}

\begin{proposition}\label{p7.1}
Let $f(z)$ be real analytic on $(a,b)$ and $f(z)\not\equiv 0$ for $z\in(a,b)$. If $z_0\in(a,b)$ satisfies $f(z_0)=0$, then $z_0$ is an isolated zero point of $f(z)$, which means that there exists $\delta>0$ such that
$$f(z)\neq 0 \  for \ z\in(z_0-\delta,z_0+\delta)\backslash\{z_0\}.$$
\end{proposition}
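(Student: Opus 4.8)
The plan is to use the local power series representation of a real analytic function together with the identity principle on a connected interval. First I would record that, since $f$ is real analytic on $(a,b)$, it admits a convergent power series expansion about $z_0$,
\[
f(z)=\sum_{n=0}^{\infty}c_n(z-z_0)^n,
\]
valid on some subinterval $(z_0-r,z_0+r)\subset(a,b)$ with $r>0$. The hypothesis $f(z_0)=0$ forces $c_0=0$. The argument then splits according to whether every coefficient vanishes.

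Suppose first that $c_n=0$ for all $n\geq 0$. Then $f\equiv 0$ on $(z_0-r,z_0+r)$, so all derivatives of $f$ vanish throughout a neighborhood of $z_0$. Here I would invoke the identity principle: the set
\[
A=\{z\in(a,b):f^{(k)}(z)=0\ \text{for all }k\geq 0\}
\]
is nonempty, open (vanishing of all derivatives at a point propagates to a neighborhood by local analyticity) and closed (by continuity of each derivative) in $(a,b)$. Since $(a,b)$ is connected, $A=(a,b)$, whence $f\equiv 0$ on $(a,b)$, contradicting $f\not\equiv 0$. Hence not all coefficients vanish.

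In the remaining case, let $m\geq 1$ be the smallest index with $c_m\neq 0$. I would factor
\[
f(z)=(z-z_0)^m g(z),\qquad g(z)=\sum_{k=0}^{\infty}c_{m+k}(z-z_0)^k,
\]
where $g$ is real analytic near $z_0$ and $g(z_0)=c_m\neq 0$. By continuity of $g$ there is $\delta\in(0,r)$ with $g(z)\neq 0$ on $(z_0-\delta,z_0+\delta)$. Then for $z$ in this interval with $z\neq z_0$ one has $(z-z_0)^m\neq 0$ and $g(z)\neq 0$, so $f(z)\neq 0$, giving the desired $\delta$ and showing $z_0$ is isolated.

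The main obstacle is precisely the first case, namely ruling out that $f$ vanishes to infinite order at $z_0$; this is where the connectedness of the interval and the identity principle for analytic functions are essential, rather than any purely local estimate. Once that case is excluded, the remainder is the elementary factorization $f(z)=(z-z_0)^m g(z)$ with $g(z_0)\neq 0$ followed by a continuity argument.
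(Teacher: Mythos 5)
Your proof is correct, and its core step is the same as the paper's: factor $f(z)=(z-z_0)^m g(z)$ with $g$ analytic and $g(z_0)\neq 0$, then conclude by continuity of $g$ that $f$ has no other zeros in a small interval around $z_0$. The genuine difference is that you justify why a first nonvanishing Taylor coefficient exists at all. The paper's proof begins by simply assuming $f(z_0)=f'(z_0)=\cdots=f^{(n-1)}(z_0)=0$ and $f^{(n)}(z_0)\neq 0$ for some finite $n\geq 1$, offering no argument that rules out $f$ vanishing to infinite order at $z_0$; yet that is exactly the point where the hypotheses $f\not\equiv 0$ and the connectedness of $(a,b)$ must be used. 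Your open-and-closed argument for the set $A=\{z\in(a,b):\ f^{(k)}(z)=0\ \text{for all}\ k\geq 0\}$ (open by local analyticity, closed by continuity of the derivatives, hence either empty or all of $(a,b)$) supplies precisely this missing step, so your write-up is strictly more complete than the paper's; the only cost is length.
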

\begin{proof}
Since $f(z_0)=0$, we assume that
\begin{equation}\label{7.1}
f(z_0)=f^{\prime}(z_0)=\cdots=f^{(n-1)}(z_0)=0 \ and \ f^{(n)}(z_0)\neq 0,\ n\geq 1.
\end{equation}
It follows that
\begin{equation}\label{7.2}
f(z)=\frac{f^{(n)}(z_0)}{n!}(z-z_0)^n+\frac{f^{(n+1)}(z_0)}{(n+1)!}(z-z_0)^{n+1}+\cdots=(z-z_0)^n g(z).
\end{equation}
It is clear that $g(z_0)\neq0$, $g(z)$ is continuous and  analytic on $(a,b)$. Thus there exists $\delta>0$ such that
$$g(z)\neq0 \ for\  z\in(z_{0}-\delta,z_0+\delta).$$
From (\ref{7.2}) we have that
\begin{equation}\label{7.3}
f(z_0)=0,\ f(z)\neq0 \ for \ z\in(z_0-\delta,z_0+\delta)\backslash\{z_0\}.
\end{equation}
Therefore $z_0$ is an isolated zero point of $f(z)$.

This proves Proposition \ref{p7.1}.
\end{proof}
\begin{proposition}\label{p7.2}
Let $\omega\in(0,\frac{3}{16})$ and $Q_\omega$ be the ground state of (\ref{1.2}). If $\omega_0\in(0,\frac{3}{16})$ satisfies that $$\frac{d}{d\omega}M(Q_\omega)\mid_{\omega=\omega_0}=0,$$
then $\omega_0$ is an isolated zero point of $\frac{d}{d\omega}M(Q_\omega)$.
\end{proposition}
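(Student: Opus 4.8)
The plan is to reduce the statement to the elementary analytic fact already recorded in Proposition \ref{p7.1}, namely that a real-analytic function which is not identically zero has only isolated zeros. First I would set
\begin{equation*}
f(\omega):=\frac{d}{d\omega}M(Q_\omega),\qquad \omega\in\left(0,\tfrac{3}{16}\right),
\end{equation*}
and observe that, by Proposition \ref{p2.2a}(II), the map $\omega\mapsto M(Q_\omega)$ is real analytic on $(0,\frac{3}{16})$. Since the derivative of a real-analytic function is again real analytic, $f$ is real analytic on $(0,\frac{3}{16})$. This places us exactly in the hypothesis of Proposition \ref{p7.1}, provided we can rule out the degenerate case $f\equiv 0$.

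The key remaining step is therefore to verify that $f\not\equiv 0$ on $(0,\frac{3}{16})$. Here I would argue by contradiction: if $f\equiv 0$, then $M(Q_\omega)$ would be constant throughout $(0,\frac{3}{16})$. But Proposition \ref{p2.2a}(I) asserts that $M(Q_\omega)\to\infty$ as $\omega\to 0$ and $M(Q_\omega)\to\infty$ as $\omega\to\frac{3}{16}$, so $M(Q_\omega)$ cannot be constant. This contradiction forces $f\not\equiv 0$.

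With $f$ real analytic, $f\not\equiv 0$, and $f(\omega_0)=0$ (the hypothesis of the proposition), I would apply Proposition \ref{p7.1} with $z_0=\omega_0$ to conclude that there exists $\delta>0$ such that $f(\omega)\neq 0$ for $\omega\in(\omega_0-\delta,\omega_0+\delta)\backslash\{\omega_0\}$. This is precisely the assertion that $\omega_0$ is an isolated zero of $\frac{d}{d\omega}M(Q_\omega)$.

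I do not expect any serious obstacle here: the whole argument is a direct application of the previously established analyticity (Proposition \ref{p2.2a}) and the isolated-zero lemma (Proposition \ref{p7.1}). The only point requiring a moment's care is the non-triviality of $f$, and even that is immediate from the endpoint blow-up of the mass. The genuinely hard analytic content — the real analyticity of $\omega\mapsto M(Q_\omega)$ itself — is assumed as input from Proposition \ref{p2.2a}, so this proposition is essentially a bookkeeping step that upgrades a single vanishing of $\frac{d}{d\omega}M(Q_\omega)$ to local isolation, a fact that will be needed to describe the stability threshold $\omega_\ast$.
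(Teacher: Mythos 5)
Your proposal is correct and follows essentially the same route as the paper's own proof: real analyticity of $\omega\mapsto M(Q_\omega)$ from Proposition \ref{p2.2a} gives analyticity of the derivative, the identically-zero case is excluded because $M(Q_\omega)$ cannot be constant (the paper cites Proposition \ref{p2.2a} for the contradiction, and you correctly pinpoint the endpoint blow-up in part (I) as the reason), and then Proposition \ref{p7.1} finishes the argument. There is nothing to add.
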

\begin{proof}
In terms of Proposition \ref{p2.2a}, $M(Q_\omega)=\int Q^2_\omega dx$ is real analytic on $(0,\frac{3}{16})$. It follows that $\frac{d}{d\omega}M(Q_\omega)$ is also analytic on $(0,\frac{3}{16})$. Now suppose that
$$\frac{d}{d\omega}M(Q_\omega)\equiv0 \ for \ \omega\in(0,\frac{3}{16}).$$
Then it leads to
$$ M(Q_\omega)\equiv constant=c \ for \ \omega\in(0,\frac{3}{16}).$$
This is contradictory  with Proposition \ref{p2.2a}. Therefore $$\frac{d}{d\omega}M(Q_\omega)\not\equiv 0 \ for \ \omega\in(0,\frac{3}{16}).$$
By Proposition \ref{p7.1}, one deduces Proposition \ref{p7.2}.

This proves Proposition \ref{p7.2}.
\end{proof}
\begin{theorem}\label{t7.3}
Let $\omega\in(0,\frac{3}{16})$ and $Q_\omega$ be the ground state of (\ref{1.2}). Then we have that for $\omega\in (0,\omega_*)$,
$$\frac{d}{d\omega}M(Q_\omega)\leq 0. $$
Moreover if $\omega_0\in(0,\omega_*)$ satisfies  $$\frac{d}{d\omega}M(Q_\omega)\mid_{\omega=\omega_0}=0,$$
then there exists $\delta>0$ such that for $\omega\in(\omega_0-\delta,\omega_0+\delta)/ \{\omega_0\}$,
$$\frac{d}{d\omega}M(Q_\omega)<0.$$
\end{theorem}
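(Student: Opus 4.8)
The plan is to split the statement into its global part ($\frac{d}{d\omega}M(Q_\omega)\le 0$ on $(0,\omega_*)$) and its local refinement, and to derive the local part cheaply from the global one. For the local part, assume $\frac{d}{d\omega}M(Q_\omega)|_{\omega=\omega_0}=0$ with $\omega_0\in(0,\omega_*)$. Proposition \ref{p7.2} tells us $\omega_0$ is an isolated zero of the real-analytic function $\frac{d}{d\omega}M(Q_\omega)$, so there is $\delta>0$ with $\frac{d}{d\omega}M(Q_\omega)\neq 0$ on $(\omega_0-\delta,\omega_0+\delta)\setminus\{\omega_0\}$; after shrinking $\delta$ we may keep this interval inside $(0,\omega_*)$. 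The global part forces $\frac{d}{d\omega}M(Q_\omega)\le 0$ there, and together with non-vanishing this upgrades to $\frac{d}{d\omega}M(Q_\omega)<0$ on the punctured interval. Thus everything hinges on the global inequality.

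For the global inequality I would argue by contradiction, using the spectral--variational dictionary to convert a sign into membership in the energy-ground-state family. Suppose $\frac{d}{d\omega}M(Q_\omega)|_{\omega=\omega_0}>0$ for some $\omega_0\in(0,\omega_*)$, and write $m=M(Q_{\omega_0})$. By Proposition \ref{p6.2}, $Q_{\omega_0}$ minimizes $d_m$. If $m<\rho$ this contradicts Proposition \ref{p2.5}(I), where $d_m=0$ is not attained while $E>0$ on the constraint set; hence $m\ge\rho$. For $m\ge\rho$ Proposition \ref{p2.16}(III) gives $d_m=d_m^{I}$, and since $I(Q_{\omega_0})=0$ by the Pohozaev identity of Proposition \ref{p2.2}, $Q_{\omega_0}$ is then also a minimizer of $d_m^{I}$; therefore $\omega_0\in\mathcal{B}$. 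On the other hand, every $\omega\in\mathcal{B}$ is orbitally stable by Theorem \ref{t5.6}, so by Proposition \ref{p6.3} it cannot obey $\frac{d}{d\omega}M<0$; hence $\frac{d}{d\omega}M(Q_\omega)\ge 0$ throughout $\mathcal{B}$. The contradiction must therefore come from locating $\omega_0$ inside $\mathcal{B}$ strictly below the minimal-mass frequency $\omega_*$.

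To close the contradiction I would establish that $\mathcal{B}$ is connected, i.e.\ that the map sending a mass $m\ge\frac{4}{3\sqrt3}\rho$ to the frequency of its (unique, by Theorem \ref{t4.1}) energy ground state is continuous. Granting this, $\mathcal{B}$ is an interval; since $\omega_0\in\mathcal{B}$ and $\omega_*\in\mathcal{B}$ (Theorem \ref{t4.16}), the whole interval $(\omega_0,\omega_*)$ lies in $\mathcal{B}$, so $\frac{d}{d\omega}M(Q_\omega)\ge 0$ there and $M(Q_\omega)$ is non-decreasing on $(\omega_0,\omega_*)$. Letting $\omega\uparrow\omega_*$ and using continuity gives $M(Q_{\omega_0})\le M(Q_{\omega_*})=m_0$. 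But $\omega_0\neq\omega_*$, and by Theorem \ref{t4.25} $\omega_*$ is the unique frequency with mass $m_0$, so $M(Q_{\omega_0})>m_0$, a contradiction.

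The main obstacle is the connectedness of $\mathcal{B}$ (equivalently, continuity of the energy-ground-state branch in the mass), which is the genuinely geometric input and cannot be extracted from the soft arguments above: Proposition \ref{p6.2} pins the sign of $\frac{d}{d\omega}M$ cleanly only where $M(Q_\omega)<\rho$, and on the complementary range one must track the shape of the mass curve directly. A further technical point to handle with care is the rescaling $\psi(x)=av(\lambda x)$ of (\ref{3.2}) relating a minimizer of $d_m^{I}$ to the soliton $Q_\omega$; all mass comparisons must be transported back to the level of $Q_\omega$, as in Theorem \ref{t4.14}, so that the minimality of $m_0$ at $\omega_*$ is applied correctly. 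Should connectedness prove too rigid to obtain directly, the fallback is a local analysis at the first downward zero-crossing of $\frac{d}{d\omega}M$ in $(\omega_0,\omega_*)$, namely a local maximum of $M$ whose left neighborhood lies in $\mathcal{B}$ (hence is stable) and whose right neighborhood carries $\frac{d}{d\omega}M<0$ (hence is unstable by Proposition \ref{p6.3} and Theorem \ref{t6.4}), and to extract the contradiction there.
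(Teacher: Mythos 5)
Your local step is fine and matches the paper: once $\tfrac{d}{d\omega}M(Q_\omega)\le 0$ on $(0,\omega_*)$ is known, Proposition \ref{p7.2} upgrades any interior zero to an isolated zero and the strict negativity on the punctured interval follows. The problem is the global inequality, and there your proof has a genuine gap that you yourself flag: the connectedness of $\mathcal{B}$. This is not a technical loose end but essentially the content of the theorem. The statement that $\mathcal{B}$ is an interval is (in view of Theorem \ref{t4.16}) tantamount to Theorem \ref{t9.1}, i.e.\ $\mathcal{B}=[\omega_*,\frac{3}{16})$, which the paper proves \emph{after} and \emph{from} Theorems \ref{t7.3}--\ref{t7.5} and \ref{t8.1}--\ref{t8.2}; obtaining it directly would require continuity of the mass-to-frequency map $m\mapsto\omega(m)$ along the branch of energy ground states, a compactness-plus-uniqueness argument you do not supply. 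So the main line of your proof assumes a statement of the same depth as the conclusion. Your fallback does not close the gap either: at a putative local maximum $\bar\omega$ of $M(Q_\omega)$ in $(0,\omega_*)$ you get frequencies just left of $\bar\omega$ that are stable (they lie in $\mathcal{B}$) and frequencies just right that are unstable (Proposition \ref{p6.3}), but stability and instability at \emph{different} frequencies is not a contradiction — nothing in the toolkit forbids that local picture per se. Also note Theorem \ref{t6.4} cannot be applied at $\bar\omega$ itself, since it requires $\tfrac{d}{d\omega}M<0$ on \emph{both} sides of the zero.

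What the paper does instead avoids $\mathcal{B}$-connectedness entirely and supplies exactly the missing mechanism. Starting from $\tfrac{d}{d\omega}M(Q_\omega)\mid_{\omega_*}=0$ (Theorem \ref{t4.25}) and propagating leftward, it lets $\omega_1$ be the left endpoint of the maximal interval $(\omega_1,\omega_*)$ of strict negativity and assumes for contradiction that $\tfrac{d}{d\omega}M(Q_\omega)>0$ just to the left of $\omega_1$. Since $M(Q_{\omega_1})>m_0=M(Q_{\omega_*})$ and $M(Q_\omega)\to\infty$ as $\omega\to\frac{3}{16}$ (Proposition \ref{p2.2a}), the intermediate value theorem produces $\omega_2\in(\omega_*,\frac{3}{16})$ with $M(Q_{\omega_2})=M(Q_{\omega_1})$ and $\tfrac{d}{d\omega}M(Q_\omega)\ge 0$ on $(\omega_2-\delta,\omega_2]$; a further IVT pairing yields two \emph{distinct} frequencies $\omega^*\in(\omega_1-\delta_1,\omega_1)$ and $\omega_2-\delta_2$ with equal mass and strictly positive mass-derivative. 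Proposition \ref{p6.2} then makes both $Q_{\omega^*}$ and $Q_{\omega_2-\delta_2}$ positive minimizers of the same mass-constrained problem, and the uniqueness Theorem \ref{t4.13} forces $Q_{\omega^*}\equiv Q_{\omega_2-\delta_2}$ up to translation — contradicting $\omega^*\neq\omega_2-\delta_2$. It is this pairing of frequencies \emph{across} $\omega_*$, combined with uniqueness of the mass-constrained minimizer, that replaces the connectedness you could not establish; an induction over the (isolated, countable) zeros of $\tfrac{d}{d\omega}M(Q_\omega)$ then covers all of $(0,\omega_*)$. Your proposal identifies all the right ingredients (Propositions \ref{p6.2}, \ref{p6.3}, \ref{p7.2}, Theorems \ref{t4.25}, \ref{t5.6}) but lacks this closing device, so as written it is incomplete.
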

\begin{proof}
By Lemma \ref{p2.17},
$$M(Q_\omega)\geq m_{0} \;\; for \;\;all\;\; \omega\in(0,\frac{3}{16}).$$
From Theorem \ref{t4.25},  $\omega_*\in(0,\frac{3}{16})$ satisfies $M(Q_{\omega_*})=m_{0}$. It follows that
$$\frac{d}{d\omega}M(Q_\omega)\mid_{\omega=\omega_*}=0.$$
From Proposition \ref{p7.2}, $\omega_*$ is an isolated zero point of $\frac{d}{d\omega}M(Q_\omega)$. From Proposition \ref{p2.2a} thus there exists the minimum $\omega_1\in[0,\omega_*)$ satisfying the following alternatives. If $\omega_1=0$, then for all $\omega\in(0,\omega_*)$ one has that
$$\frac{d}{d\omega}M(Q_\omega)<0.$$
If $\omega_1>0$, one has that for $\omega\in(\omega_1,\omega_*)$
$$\frac{d}{d\omega}M(Q_\omega)<0$$
and
$$\frac{d}{d\omega}M(Q_\omega)\mid_{\omega=\omega_1}=0.$$
In the following  we prove that with some $\delta>0$,
$$\frac{d}{d\omega}M(Q_\omega)<0 \ for \ \omega\in(\omega_1-\delta,\omega_1).$$

Otherwise, from Proposition \ref{p7.2} there exists $\delta_1>0$ such that $$\frac{d}{d\omega}M(Q_\omega)>0 \ for \
\omega\in(\omega_1-\delta_1,\omega_1).$$
By Proposition \ref{p2.2a},
$$M(Q_\omega)\in C^1(0,\frac{3}{16})\ and \ \lim\limits_{\omega\rightarrow\frac{3}{16}}M(Q_\omega)=\infty.$$
Thus there must exist $\omega_2\in(\omega_*,\frac{3}{16})$ such that $$M(Q_{\omega_2})=M(Q_{\omega_1})$$
and with some $\delta>0$
\begin{equation}\label{7.3a}
\frac{d}{d\omega}M(Q_\omega)\geq 0 \ for \ \omega\in(\omega_2-\delta,\omega_2].
\end{equation}
Now we divide two situations to proceed.

The first situation: $\frac{d}{d\omega}M(Q_\omega)\mid_{\omega=\omega_2}>0$. Then there exists enough small $\delta_2>0$
such that
$$\frac{d}{d\omega}M(Q_\omega)\mid_{\omega=\omega_2-\delta_2}>0.$$

The second situation: $\frac{d}{d\omega}M(Q_\omega)\mid_{\omega=\omega_2}=0$. By Proposition \ref{p7.2} and (\ref{7.3a}),
there exists enough small $\delta_2>0$ such that $$\frac{d}{d\omega}M(Q_\omega)\mid_{\omega=\omega_2-\delta_2}>0.$$
It returns to the first situation.

In both situations, there must exist $\omega^*\in(\omega_1-\delta_1,\omega_1)$ such that
$$M(Q_{\omega^*})=M(Q_{\omega_2-\delta_2}) \ \  and \ \
\frac{d}{d\omega}M(Q_\omega)\mid_{\omega=\omega^*}>0.$$
In terms of Proposition \ref{p6.2}, both $Q_{\omega^*}$ and $Q_{\omega_2-\delta_2}$ are positive minimizers of the variational problem
\begin{equation*}
d_{\omega^*}:=\mathop{\inf}\limits_ {\{u\in H^1(\mathbb{R}^3),\  \int\lvert u\rvert^2dx=M(Q_{\omega^*})=M(Q_{\omega_2-\delta_2})\}
}
E(u).
\end{equation*}
From Theorem \ref{t4.13}, $Q_{\omega^*}\equiv Q_{\omega_2-\delta_2}$ up to a translation. This is contradictory with $\omega^*\neq \omega_2-\delta_2$. Thus we conclude that if $\omega_1>0$, then $$\frac{d}{d\omega}M(Q_\omega)\mid_{\omega=\omega_1}=0$$
and with some $\delta>0$
$$\frac{d}{d\omega}M(Q_\omega)<0 \ for \ \omega\in(\omega_1-\delta,\omega_1)\cup(\omega_1,\omega_*).$$
Since the zero points of $\frac{d}{d\omega}M(Q_\omega)$ on $(0,\omega_*)$ are isolated and countable, this process can go on and on.  By mathematical induction we deduce the following fact. For arbitrary $\omega\in(0,\omega_*)$, one has that
$$\frac{d}{d\omega}M(Q_\omega)\leq 0.$$
If $\omega_0\in(0,\omega_*)$ satisfies that $$\frac{d}{d\omega}M(Q_\omega)\mid_{\omega=\omega_0}=0,$$
 there exists $\delta>0$ such that
$$\frac{d}{d\omega}M(Q_\omega)<0 \ for \ \omega\in(\omega_0-\delta,\omega_0+\delta)/ \{\omega_0\}.$$

This proves Theorem \ref{t7.3}.
\end{proof}
\begin{theorem}\label{t7.4}
Let $\omega\in(0,\frac{3}{16})$ and $Q_\omega$ be the ground state of (\ref{1.2}).
Then for $\omega\in(0,\omega_*)$, the soliton $Q_\omega e^{i\omega t}$ of (\ref{1.1}) is orbitally unstable.
\end{theorem}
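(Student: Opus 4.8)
The plan is to prove instability at every $\omega\in(0,\omega_*)$ by a dichotomy on the sign of $\frac{d}{d\omega}M(Q_\omega)$, feeding the outcome into the two spectral instability criteria already established (Proposition \ref{p6.3} and Theorem \ref{t6.4}). The crucial structural input is Theorem \ref{t7.3}, which tells us that $\frac{d}{d\omega}M(Q_\omega)\leq 0$ throughout $(0,\omega_*)$ and, moreover, that every zero of the derivative on this interval is strictly negative on a full punctured neighborhood. So all the genuine analytic work has been done upstream; the task here is to assemble these facts into a uniform instability statement.

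Fix $\omega\in(0,\omega_*)$ and consider the value $\frac{d}{d\omega}M(Q_\omega)$ at this frequency. First I would treat the non-degenerate case $\frac{d}{d\omega}M(Q_\omega)<0$. Here the conclusion is immediate: Proposition \ref{p6.3} asserts that a strictly negative mass slope forces $Q_\omega$ to be orbitally unstable, and by Remark \ref{r5.4} the orbital instability of the ground state $Q_\omega(x)$ is equivalent to that of the soliton $Q_\omega(x)e^{i\omega t}$ of (\ref{1.1}). Thus the soliton is orbitally unstable at every such $\omega$.

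The remaining, and more delicate, case is the degenerate one in which $\frac{d}{d\omega}M(Q_\omega)\mid_{\omega}=0$. Here Proposition \ref{p6.3} does not apply, since its hypotheses require a strict sign on the slope. Instead I would invoke the second part of Theorem \ref{t7.3}: because $\omega\in(0,\omega_*)$ is a zero of $\frac{d}{d\omega}M(Q_\omega)$, there exists $\delta>0$ with $\frac{d}{d\omega}M(Q_\omega)<0$ for all frequencies in $(\omega-\delta,\omega+\delta)\setminus\{\omega\}$. This is precisely the hypothesis of Theorem \ref{t6.4}, whose proof rests on the failure of convexity of $d(\omega)=E(Q_\omega)+\omega M(Q_\omega)$ (via $d''(\omega)=\frac{d}{d\omega}M(Q_\omega)$) and the Grillakis--Shatah--Strauss instability theory. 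Applying Theorem \ref{t6.4} gives orbital instability of $Q_\omega e^{i\omega t}$ even at the degenerate frequencies.

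Combining the two cases yields instability for every $\omega\in(0,\omega_*)$, which is the assertion of Theorem \ref{t7.4}. The main conceptual obstacle is entirely concentrated in the degenerate case: one must rule out that an interior zero of the mass derivative behaves like a local maximum of $M(Q_\omega)$ (which could produce stability), and this is exactly what Theorem \ref{t7.3} forbids by pinning such zeros down as strict one-sided minima of the slope. Once that geometric picture of $M(Q_\omega)$ on $(0,\omega_*)$ is in hand, the passage from the sign of $\frac{d}{d\omega}M(Q_\omega)$ to orbital instability is a routine invocation of the two spectral criteria, so I expect no further difficulty in this final step.
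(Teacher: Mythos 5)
Your proposal is correct and follows essentially the same route as the paper's own proof: the same dichotomy on the sign of $\frac{d}{d\omega}M(Q_\omega)$ over $(0,\omega_*)$, invoking Proposition \ref{p6.3} when the slope is strictly negative and Theorem \ref{t6.4} (via the isolated-zero/punctured-neighborhood conclusion of Theorem \ref{t7.3}) at degenerate frequencies. The only cosmetic difference is your explicit appeal to Remark \ref{r5.4}, which the paper leaves implicit.
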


\begin{proof}
When $0<\omega<\omega_*$, by Theorem \ref{t7.3},
$$\frac{d}{d\omega}M(Q_\omega)<0,$$
or
$$\frac{d}{d\omega}M(Q_\omega)=0$$
but $\omega$ is an isolated zero point. In the following we divide two cases to proceed.

The first case: $0<\omega<\omega_*$ and $\frac{d}{d\omega}M(Q_\omega)<0$. In this case, from  Proposition \ref{p6.3}, the soliton $Q_\omega e^{i\omega t}$ of (\ref{1.1}) is orbitally unstable.

The second case: $0<\omega<\omega_*$ and $\frac{d}{d\omega}M(Q_\omega)=0$. In this case, we let $\omega_0\in(0,\omega_*)$ satisfies
$$\frac{d}{d\omega}M(Q_\omega)\mid_{\omega=\omega_0}=0.$$
By Theorem \ref{t7.3},
there exists $\delta_0>0$ such that
$$\frac{d}{d\omega}M(Q_\omega)<0 \ for \ \omega\in(\omega_0-\delta,\omega_0+\delta)/ \{\omega_0\}.$$
From Theorem \ref{t6.4}, the soliton $Q_{\omega_0} e^{i\omega_0 t}$ of (\ref{1.1}) is orbitally unstable.

Therefore it is shown that for any $\omega\in(0,\omega_*)$, the soliton $Q_{\omega} e^{i\omega t}$ of (\ref{1.1}) is orbitally unstable.

This completes the proof of Theorem \ref{t7.4}.
\end{proof}
\begin{theorem}\label{t7.5}
Let $\omega\in(0,\frac{3}{16})$ and $Q_\omega$ be the ground state of (\ref{1.2}).
Then the map $\omega\rightarrow M(Q_\omega)$ is strictly decreasing for $\omega<\omega_*$.
\end{theorem}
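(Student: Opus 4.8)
The plan is to deduce strict monotonicity from the two facts already established: that the derivative of the mass is non-positive throughout $(0,\omega_*)$, and that its zeros form a discrete set. First I would invoke Theorem \ref{t7.3}, which gives $\frac{d}{d\omega}M(Q_\omega)\leq 0$ for every $\omega\in(0,\omega_*)$, together with Proposition \ref{p2.2a}, which guarantees that $\omega\mapsto M(Q_\omega)$ is $C^1$ (indeed real analytic) on $(0,\frac{3}{16})$, so that its derivative is continuous. Combining these, the fundamental theorem of calculus yields, for any $\omega_1<\omega_2$ in $(0,\omega_*)$,
\begin{equation*}
M(Q_{\omega_2})-M(Q_{\omega_1})=\int_{\omega_1}^{\omega_2}\frac{d}{d\omega}M(Q_\omega)\,d\omega\leq 0,
\end{equation*}
so that $\omega\mapsto M(Q_\omega)$ is at least non-increasing on $(0,\omega_*)$.

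The remaining point is to upgrade this to a \emph{strict} decrease, and I would do so by contradiction. Suppose there were $\omega_1<\omega_2$ in $(0,\omega_*)$ with $M(Q_{\omega_1})=M(Q_{\omega_2})$. Then the displayed integral vanishes. Since the integrand $\frac{d}{d\omega}M(Q_\omega)$ is continuous and non-positive on $[\omega_1,\omega_2]$, a vanishing integral forces $\frac{d}{d\omega}M(Q_\omega)\equiv 0$ on the whole interval $(\omega_1,\omega_2)$. This, however, contradicts Proposition \ref{p7.2}, by which every zero of $\frac{d}{d\omega}M(Q_\omega)$ in $(0,\frac{3}{16})$ is isolated; an interval of zeros is impossible. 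Hence the integral is strictly negative and $M(Q_{\omega_2})<M(Q_{\omega_1})$, which is precisely the asserted strict decrease on $(0,\omega_*)$.

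Since the genuinely analytic work — the sign of the derivative and the isolatedness of its zeros — has already been carried out in Theorem \ref{t7.3} and Propositions \ref{p7.2} and \ref{p2.2a}, I do not expect any serious obstacle here; the argument is a routine real-analysis passage from \textbf{non-positive derivative with discrete zero set} to \textbf{strictly decreasing}. The only point requiring a moment's care is the step ruling out an interval on which the derivative vanishes, and this is exactly what the isolatedness in Proposition \ref{p7.2} supplies. Alternatively, one could avoid the integral formulation and argue locally: the second clause of Theorem \ref{t7.3} shows that at each zero $\omega_0\in(0,\omega_*)$ the derivative is strictly negative on a punctured neighborhood, so $M$ strictly decreases across every such point, and strict monotonicity then follows by concatenating strictly decreasing pieces over the discrete zero set.
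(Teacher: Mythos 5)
Your proposal is correct and follows essentially the same route as the paper: the paper's proof likewise combines Theorem \ref{t7.3} (non-positive derivative on $(0,\omega_*)$ with isolated, countable zeros) with the $C^1$ regularity of $M(Q_\omega)$ from Proposition \ref{p2.2a} to conclude strict decrease. The only difference is that you spell out the routine passage (via the fundamental theorem of calculus and the contradiction argument ruling out an interval of zeros) that the paper states in one line.
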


\begin{proof}
 By Theorem \ref{t7.3},
$$\frac{d}{d\omega}M(Q_\omega)\leq0 \  for \ \omega<\omega_*$$
and the zero points of $\frac{d}{d\omega}M(Q_\omega)$ are isolated and countable. Since $M(Q_\omega)\in C^1(0,\frac{3}{16})$, it follows that the map $\omega\rightarrow M(Q_\omega)$ is strictly decreasing for $\omega<\omega_*$ .

This proves Theorem \ref{t7.5}.

\end{proof}

\section{Stability of  solitons}\label{sec13}

\begin{theorem}\label{t8.1}
Let $\omega\in(0,\frac{3}{16})$ and $Q_\omega$ be the ground state of (\ref{1.2}).
Then we have that for $\omega>\omega_*$,
$$\frac{d}{d\omega}M(Q_\omega)\geq0 .$$
Moreover
if $\omega_0\in(\omega_*,\frac{3}{16})$ satisfies $$\frac{d}{d\omega}M(Q_\omega)\mid_{\omega=\omega_0}=0,$$
then there exists $\delta>0$ such that for $ \omega\in(\omega_0-\delta,\omega_0+\delta)/ \{\omega_0\}$,
$$\frac{d}{d\omega}M(Q_\omega)>0 .$$
\end{theorem}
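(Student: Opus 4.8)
The plan is to mirror the argument of Theorem \ref{t7.3}, which established the analogous one-sided monotonicity on $(0,\omega_*)$, now reflecting it across the global minimum $\omega_*$. First I would record the starting data: by Theorem \ref{t4.25} and Lemma \ref{p2.17}, $\omega_*$ realizes the global minimum $M(Q_{\omega_*})=m_0\leq M(Q_\omega)$ for all $\omega\in(0,\frac{3}{16})$, so $\frac{d}{d\omega}M(Q_\omega)\mid_{\omega=\omega_*}=0$; by Proposition \ref{p7.2} this zero is isolated; and since Theorem \ref{t7.3} gives $\frac{d}{d\omega}M(Q_\omega)<0$ immediately to the left of $\omega_*$ while $\omega_*$ is a global minimum, it follows that $\frac{d}{d\omega}M(Q_\omega)>0$ immediately to the right of $\omega_*$. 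Because $M(Q_\omega)$ is real analytic (Proposition \ref{p2.2a}), all zeros of $\frac{d}{d\omega}M(Q_\omega)$ on $(\omega_*,\frac{3}{16})$ are isolated, so $M(Q_\omega)$ is strictly monotone between consecutive zeros.

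The heart of the proof is a contradiction argument. Suppose the set $\{\omega\in(\omega_*,\frac{3}{16}):\frac{d}{d\omega}M(Q_\omega)<0\}$ is nonempty, and let $\omega_1$ be its infimum. By the sign just to the right of $\omega_*$ one has $\omega_1>\omega_*$ and $\frac{d}{d\omega}M(Q_\omega)\geq 0$ on $(\omega_*,\omega_1)$, so $M(Q_\omega)$ increases strictly from $m_0$ up to $m_1:=M(Q_{\omega_1})$ there, while it strictly decreases below $m_1$ on an interval just to the right of $\omega_1$. Invoking $M(Q_\omega)\to\infty$ as $\omega\to\frac{3}{16}$ (Proposition \ref{p2.2a}), the mass must climb back up; hence for a value $v$ chosen slightly below $m_1$, above $m_0$, and avoiding the countably many critical values of $M(Q_\omega)$, there exist $\omega^*\in(\omega_*,\omega_1)$ and $\omega_2\in(\omega_1,\frac{3}{16})$ with $M(Q_{\omega^*})=M(Q_{\omega_2})=v$ and $\frac{d}{d\omega}M(Q_\omega)>0$ at both points.

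At this stage I would invoke the variational bridge exactly as in Theorem \ref{t7.3}: since $\frac{d}{d\omega}M(Q_\omega)>0$ at $\omega^*$ and at $\omega_2$, Proposition \ref{p6.2} shows that both $Q_{\omega^*}$ and $Q_{\omega_2}$ are positive minimizers of the mass-constrained problem with common mass $v$, and then Theorem \ref{t4.13} forces $Q_{\omega^*}\equiv Q_{\omega_2}$ up to translation; since distinct frequencies give distinct ground states (subtracting the two copies of equation (\ref{1.2}) yields $(\omega^*-\omega_2)Q=0$), this gives $\omega^*=\omega_2$, contradicting $\omega^*<\omega_1<\omega_2$. Hence the assumed set is empty and $\frac{d}{d\omega}M(Q_\omega)\geq 0$ throughout $(\omega_*,\frac{3}{16})$. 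For the \emph{moreover} assertion, if $\omega_0\in(\omega_*,\frac{3}{16})$ is a zero of $\frac{d}{d\omega}M(Q_\omega)$, then Proposition \ref{p7.2} makes it isolated, so $\frac{d}{d\omega}M(Q_\omega)\neq 0$ on a small punctured neighborhood of $\omega_0$; combined with the nonnegativity just proved, this yields $\frac{d}{d\omega}M(Q_\omega)>0$ there.

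I expect the main obstacle to be the middle step, namely rigorously producing two distinct frequencies sharing a common mass value and both having strictly positive mass-derivative, because it is precisely here that the global blow-up of $M(Q_\omega)$ at $\frac{3}{16}$, the strict monotonicity between isolated critical points, and the avoidance of the countable set of critical values must be combined into a clean selection of $v$, $\omega^*$ and $\omega_2$. Everything else is a faithful reflection of the machinery already assembled for Theorem \ref{t7.3}, so once this selection is made the uniqueness contradiction closes the argument immediately.
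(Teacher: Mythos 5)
Your proposal is correct, and its engine is exactly the paper's: produce two frequencies $\omega^*\in(\omega_*,\omega_1)$ and $\omega_2\in(\omega_1,\frac{3}{16})$ with $M(Q_{\omega^*})=M(Q_{\omega_2})$ and strictly positive mass-derivative at both, realize $Q_{\omega^*}$ and $Q_{\omega_2}$ as positive minimizers of the same mass-constrained problem via Proposition \ref{p6.2}, and contradict uniqueness (Theorem \ref{t4.13}). The differences are organizational, and mildly to your credit. The paper defines $\omega_1$ as the endpoint of the maximal positivity interval to the right of $\omega_*$, proves positivity resumes on $(\omega_1,\omega_1+\delta)$, and then sweeps through the isolated zeros (``this process can go on and on\ldots by mathematical induction''); your single extremal contradiction with $\omega_1=\inf\{\omega>\omega_*:\frac{d}{d\omega}M(Q_\omega)<0\}$ removes that induction, which is the loosest step of the paper's write-up. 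Likewise, where the paper fixes the common mass level $M(Q_{\omega_1})$ and runs a two-case perturbation at a re-crossing to land on a point $\omega_2-\delta_2$ of positive derivative, you pick a regular value $v$ just below $m_1$ avoiding the countable set of critical values. One line you should add: regularity of $v$ only makes the derivative nonzero at crossings of level $v$, so to get it \emph{positive} at $\omega_2$ take $\omega_2$ to be the last crossing; then $M(Q_\omega)>v$ for all $\omega>\omega_2$ (since $M(Q_\omega)\to\infty$ as $\omega\to\frac{3}{16}$ by Proposition \ref{p2.2a}), so the derivative at $\omega_2$ is $\geq 0$ and hence $>0$. With that, the selection step you correctly flag as the crux closes, and the remaining points (the sign just right of $\omega_*$ forced by global minimality, frequencies being determined by their ground states, and the \emph{moreover} clause from Proposition \ref{p7.2}) are exactly as in the paper.
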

\begin{proof}
By Lemma \ref{p2.17},
$$M(Q_\omega)\geq m_{0} \;\; for  \;\; all\;\; \omega\in(0,\frac{3}{16}).$$
Since $\omega_*\in(0,\frac{3}{16})$ satisfies $M(Q_{\omega_*})=m_{0}$. It follows that
$$\frac{d}{d\omega}M(Q_\omega)\mid_{\omega=\omega_*}=0.$$
From Proposition \ref{p7.2}, $\omega_*$
is an isolated zero point of $\frac{d}{d\omega}M(Q_\omega)$. Thus from Proposition \ref{p2.2a}, there exists the maximum $\omega_1\in(\omega_*,\frac{3}{16}]$ satisfying the following alternatives. If $\omega_1=\frac{3}{16}$,
then one has that
$$\frac{d}{d\omega}M(Q_\omega)>0 \ for \  \omega\in(\omega_*,\frac{3}{16}).$$
If $\omega_1<\frac{3}{16}$, one has that
$$\frac{d}{d\omega}M(Q_\omega)>0 \ for \ \omega\in(\omega_*, \omega_1)$$
and
$$\frac{d}{d\omega}M(Q_\omega)\mid_{\omega=\omega_1}=0.$$
In the following we prove that with some $\delta>0$,
$$\frac{d}{d\omega}M(Q_\omega)>0 \ for \ \omega\in(\omega_1-\delta,\omega_1).$$

Otherwise, from Proposition \ref{p7.2}, there exists $\delta_1>0$ such that $$\frac{d}{d\omega}M(Q_\omega)<0 \ for \ \omega\in(\omega_1,\omega_1+\delta_1).$$
By Proposition \ref{p2.2a},
$$M(Q_\omega)\in C^1(0,\frac{3}{16}) \ and \ \lim\limits_{\omega\rightarrow\frac{3}{16}}M(Q_\omega)=\infty.$$
Thus there must exist $\omega_2\in(\omega_1+\delta_1,\frac{3}{16})$ such that $$M(Q_{\omega_2})=M(Q_{\omega_1})$$
and with some $\delta>0$
\begin{equation}\label{8.1a}
\frac{d}{d\omega}M(Q_\omega)\geq0 \ for \ \omega\in(\omega_2-\delta,\omega_2].
\end{equation}
Now we divide two situations to proceed.

The first situation: $\frac{d}{d\omega}M(Q_\omega)\mid_{\omega=\omega_2}>0$. Then there exists enough small $\delta_2>0$ such that $$\frac{d}{d\omega}M(Q_\omega)\mid_{\omega=\omega_2-\delta_2}>0.$$

The second situation: $\frac{d}{d\omega}M(Q_\omega)\mid_{\omega=\omega_2}=0$. By Proposition \ref{p7.2} and (\ref{8.1a}),
there exists enough small $\delta_2>0$ such that $$\frac{d}{d\omega}M(Q_\omega)\mid_{\omega=\omega_2-\delta_2}>0.$$
It returns to the first situation.

In both situations, there must exist  $\omega^*\in(\omega_*,\omega_1)$ such that
$$M(Q_{\omega^*})=M(Q_{\omega_2-\delta_2}) \ \  and \ \
\frac{d}{d\omega}M(Q_\omega)\mid_{\omega=\omega^*}>0.$$
In terms of Proposition \ref{p6.2}, both $Q_{\omega^*}$ and $Q_{\omega_2-\delta_2}$ are positive minimizers of the variational problem
\begin{equation*}
d_{\omega^*}:=\mathop{\inf}\limits_ {\{u\in H^1(\mathbb{R}^3),\  \int\lvert u\rvert^2dx=M(Q_{\omega^*})=M(Q_{\omega_2-\delta_2})\}
}
E(u).
\end{equation*}
From Theorem \ref{t4.13}, $Q_{\omega^*}\equiv Q_{\omega_2-\delta_2}$ up to a translation. This is contradictory with $\omega^*\neq\omega_2-\delta_2$. Thus we conclude that if $\omega_1<\frac{3}{16}$, then
$$\frac{d}{d\omega}M(Q_\omega)\mid_{\omega=\omega_1}=0$$
and with some $\delta>0$
$$\frac{d}{d\omega}M(Q_\omega)>0 \ for \ \omega\in(\omega_*,\omega_1) \cup (\omega_1,\omega_1+\delta).$$

This process can go on and on. Since the zero points of $\frac{d}{d\omega}M(Q_\omega)$ on $(\omega_*,\frac{3}{16})$ are isolated and countable, by mathematical induction we deduce the following fact. For arbitrary $\omega\in(\omega_*,\frac{3}{16})$, one has that $$\frac{d}{d\omega}M(Q_\omega)\geq 0.$$
If $\omega_0\in(\omega_*,\frac{3}{16})$ satisfies that
$$\frac{d}{d\omega}M(Q_\omega)\mid_{\omega=\omega_0}=0,$$
there exists $\delta>0$ such that
$$\frac{d}{d\omega}M(Q_\omega)> 0 \ for \ \omega\in(\omega_0-\delta,\omega_0+\delta)/ \{\omega_0\}.$$

This proves Theorem \ref{t8.1}.
\end{proof}
\begin{theorem}\label{t8.2}
Let $\omega\in(0,\frac{3}{16})$ and $Q_\omega$ be the ground state of (\ref{1.2}).
Then the map $\omega\rightarrow M(Q_\omega)$ is strictly increasing for $\omega>\omega_*$.
\end{theorem}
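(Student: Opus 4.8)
The plan is to mirror the proof of Theorem~\ref{t7.5}, exchanging the two sides of $\omega_*$ and drawing the sign information from Theorem~\ref{t8.1} in place of Theorem~\ref{t7.3}. The entire argument rests on two facts already in hand: that $\frac{d}{d\omega}M(Q_\omega)$ is nonnegative on $(\omega_*,\frac{3}{16})$ and that its zeros there are isolated.

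First I would invoke Theorem~\ref{t8.1} to record that
\[
\frac{d}{d\omega}M(Q_\omega)\geq 0\qquad\text{for all }\omega\in(\omega_*,\tfrac{3}{16}),
\]
and that any $\omega_0\in(\omega_*,\frac{3}{16})$ at which this derivative vanishes is an isolated zero (the ``moreover'' clause of Theorem~\ref{t8.1}, which ultimately rests on the real-analyticity of $M(Q_\omega)$ via Proposition~\ref{p7.2}). Since isolated zeros on an interval form an at most countable set with empty interior, the derivative is strictly positive off a negligible set $Z\subset(\omega_*,\frac{3}{16})$.

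Next I would use Proposition~\ref{p2.2a}, which guarantees that $\omega\mapsto M(Q_\omega)$ is $C^1$ (indeed real analytic) on $(0,\frac{3}{16})$. Then, for any $\omega_*<\omega_1<\omega_2<\frac{3}{16}$, the fundamental theorem of calculus yields
\[
M(Q_{\omega_2})-M(Q_{\omega_1})=\int_{\omega_1}^{\omega_2}\frac{d}{d\omega}M(Q_\omega)\,d\omega\geq 0.
\]
Equality here would force $\frac{d}{d\omega}M(Q_\omega)\equiv 0$ on the whole subinterval $[\omega_1,\omega_2]$, contradicting the isolation of the zeros established above. Hence $M(Q_{\omega_2})>M(Q_{\omega_1})$, which is precisely the asserted strict monotone increase on $(\omega_*,\frac{3}{16})$.

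I do not anticipate a genuine obstacle, since Theorem~\ref{t8.1} already carries the analytic and variational content, namely the nonnegativity of the derivative together with the isolation of its zeros, which in turn relied on the uniqueness result Theorem~\ref{t4.13} and on Proposition~\ref{p6.2}. The only point requiring the slightest care is the passage from ``nonnegative derivative with isolated zeros'' to ``strictly increasing,'' and the integral comparison above handles it cleanly. Thus Theorem~\ref{t8.2} follows at once from Theorem~\ref{t8.1}.
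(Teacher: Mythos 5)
Your proposal is correct and follows essentially the same route as the paper: both deduce the result directly from Theorem~\ref{t8.1} (nonnegativity of $\frac{d}{d\omega}M(Q_\omega)$ on $(\omega_*,\frac{3}{16})$ together with isolation of its zeros) combined with the $C^1$ regularity from Proposition~\ref{p2.2a}. The only difference is that you spell out, via the fundamental theorem of calculus, the elementary passage from ``nonnegative derivative with isolated zeros'' to strict monotonicity, which the paper states without detail.
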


\begin{proof}
By Theorem \ref{t8.1},
$$\frac{d}{d\omega}M(Q_\omega)\geq0 \ for \ \omega>\omega_*,$$
and the zero points of  $\frac{d}{d\omega}M(Q_\omega)$ are isolated and countable. Since $M(Q_\omega)\in C^1(0,\frac{3}{16})$, we get that the map $\omega\rightarrow M(Q_\omega)$ is strictly increasing for $\omega>\omega_*$.

This proves Theorem \ref{t8.2}.
\end{proof}
\begin{theorem}\label{t8.3}
Let $\omega\in(0,\frac{3}{16})$ and $Q_\omega$ be the ground state of (\ref{1.2}).
Then for $\omega\in[\omega_*,\frac{3}{16})$,  the soliton $Q_\omega e^{i\omega t}$ of (\ref{1.1}) is orbitally stable.
\end{theorem}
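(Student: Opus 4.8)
The plan is to reduce orbital stability on $[\omega_*,\frac{3}{16})$ to the two stability results already in hand: the Grillakis--Shatah--Strauss criterion of Proposition \ref{p6.3}, which yields orbital stability of $Q_\omega$ whenever $\frac{d}{d\omega}M(Q_\omega)>0$, and the variational stability of energy ground states of Theorem \ref{t5.6}, which covers every frequency in $\mathcal{B}$. By Theorem \ref{t8.1} I already know that $\frac{d}{d\omega}M(Q_\omega)\ge 0$ for $\omega\in(\omega_*,\frac{3}{16})$, with any interior zero $\omega_0$ being isolated and satisfying $\frac{d}{d\omega}M(Q_\omega)>0$ on a punctured neighborhood of $\omega_0$. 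The endpoint $\omega=\omega_*$ is immediate: since $\omega_*\in\mathcal{B}$ by Theorem \ref{t4.16}, Theorem \ref{t5.6} gives orbital stability of $Q_{\omega_*}$.

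First I would dispose of the nondegenerate frequencies. For $\omega\in(\omega_*,\frac{3}{16})$ with $\frac{d}{d\omega}M(Q_\omega)>0$, Proposition \ref{p6.3} applies directly and $Q_\omega e^{i\omega t}$ is orbitally stable. This is the routine part and requires no further input.

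The real obstacle is the degenerate set, namely the interior zeros $\omega_0$ of $\frac{d}{d\omega}M(Q_\omega)$, where the nondegeneracy hypothesis of Proposition \ref{p6.3} fails and the spectral criterion is silent. My approach is to identify $Q_{\omega_0}$ as an energy ground state, i.e. to prove $\omega_0\in\mathcal{B}$, and then invoke Theorem \ref{t5.6}. The key observation is that Theorem \ref{t7.4} already shows every $Q_\omega$ with $0<\omega<\omega_*$ is orbitally unstable, while every element of $\mathcal{B}$ is a stable ground state; hence no frequency below $\omega_*$ can lie in $\mathcal{B}$, giving $\mathcal{B}\subseteq[\omega_*,\frac{3}{16})$. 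Next, applying Theorem \ref{t4.14} with $\omega_1=\omega_0$ and $m=M(R_{\omega_0})$ (which satisfies $m\ge\frac{4}{3\sqrt3}\rho$ by Lemma \ref{p2.17}) produces a unique energy minimizer whose rescaled ground state $Q_{\omega'}$ obeys $\omega'\in\mathcal{B}$ and $M(Q_{\omega'})=M(Q_{\omega_0})$. Since both $\omega'$ and $\omega_0$ lie in $[\omega_*,\frac{3}{16})$, on which $\omega\mapsto M(Q_\omega)$ is strictly increasing and therefore injective by Theorem \ref{t8.2}, I conclude $\omega'=\omega_0$, so $\omega_0\in\mathcal{B}$ and $Q_{\omega_0}$ is orbitally stable.

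In fact this last argument is insensitive to whether $\omega_0$ is degenerate, so I would streamline the whole proof by running it for every $\omega\in[\omega_*,\frac{3}{16})$ at once: combining $\mathcal{B}\subseteq[\omega_*,\frac{3}{16})$ (from the instability Theorem \ref{t7.4}), the mass-preserving construction of Theorem \ref{t4.14}, and the strict monotonicity of Theorem \ref{t8.2} yields $\mathcal{B}=[\omega_*,\frac{3}{16})$, after which Theorem \ref{t5.6} closes the argument. The step I expect to be most delicate is the identification $\omega'=\omega_0$: it relies on the mass being exactly preserved under the change of variables of Theorem \ref{t4.14} and on strict, not merely weak, monotonicity of the mass on the upper branch, so I would be careful to confirm that no lower-branch frequency can enter $\mathcal{B}$ and that Lemma \ref{p2.17} supplies the admissibility $m\ge\frac{4}{3\sqrt3}\rho$.
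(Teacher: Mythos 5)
Your proposal is correct and follows essentially the same route as the paper: the endpoint $\omega_*$ via $\omega_*\in\mathcal{B}$ and stability of energy ground states, the nondegenerate frequencies via the Grillakis--Shatah--Strauss criterion of Proposition \ref{p6.3}, and the degenerate zeros of $\frac{d}{d\omega}M(Q_\omega)$ by constructing the energy minimizer with mass $M(R_{\omega_0})$, excluding the lower branch through the instability result of Theorem \ref{t7.4}, and identifying the resulting frequency with $\omega_0$ via the mass preservation of Theorem \ref{t4.14} and the strict monotonicity of Theorem \ref{t8.2}. Your streamlined uniform version, which runs the identification argument for every $\omega\in[\omega_*,\frac{3}{16})$ at once to conclude $\mathcal{B}=[\omega_*,\frac{3}{16})$, is precisely what the paper later records as Theorem \ref{t9.1}, so it is a legitimate (and slightly cleaner) repackaging of the same ideas rather than a different proof.
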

\begin{proof}
When $\omega=\omega_*$, from Theorem \ref{t5.6}, the soliton $Q_{\omega_*} e^{i\omega_* t}$ of (\ref{1.1}) is orbitally stable.

When $\omega>\omega_*$, by Theorem \ref{t8.2},
$$\frac{d}{d\omega}M(Q_\omega)\geq0.$$
If $\omega$ satisfies
$$
\frac{d}{d\omega}M(Q_{\omega})=0,
$$
form Proposition \ref{p7.2}, one gets that $\omega$ is an isolated zero point. In the following we divide two cases to proceed.

The first case: $\omega>\omega_*$  and $\frac{d}{d\omega}M(Q_\omega)>0$. In this case, from  Proposition \ref{p6.3}, the soliton $Q_\omega e^{i\omega t}$ of (\ref{1.1}) is orbitally stable.

The second case: $\omega>\omega_*$  and $\frac{d}{d\omega}M(Q_\omega)=0$. In this case, we let $\omega_0\in(\omega_*,\frac{3}{16})$ satisfy $$\frac{d}{d\omega}M(Q_\omega)\mid_{\omega=\omega_0}=0.$$
For $Q_{\omega_{0}}$, from Proposition \ref{p2.18} one gets $R_{\omega_{0}}$. Let $M(R_{\omega_{0}})=m$. By Theorem \ref{t4.25}, $m\geq\frac{4}{3\sqrt{3}}\rho$. Now we construct the variational problem for $m=M(R_{\omega_{0}})$
\begin{equation}\label{8.300}
d_{m}^{I}=\inf_{\{u\in H^{1}(\mathbb{R}^{3}), \; M(u)=m, \; I(u)=0\}}E(u).
\end{equation}
By Proposition \ref{p2.16} and Theorem \ref{t4.1}, the variational problem (\ref{8.300}) possesses a unique energy minimizer $\psi$. Moreover $\psi$ satisfies the Euler-Lagrange equation (\ref{3.1}). We conduct the change of variables  $\psi=av(\lambda x)$ with (\ref{3.2}). Then $v$ is the energy ground state with $\omega$ in (\ref{3.2}). Let $\mathcal{B}$ is the set introduced in Section 4. It is clear that here $\omega\in\mathcal{B}$. By Theorem \ref{t5.5}, the energy ground state $v$ is orbitally stable. It follows that $\omega\notin (0, \omega_{\ast})$ from Theorem \ref{t7.4}. Thus $\omega\in [\omega_{\ast}, \frac{3}{16})$. According to Theorem \ref{t4.25}, one gets that $M(v)=M(Q_{\omega_{0})}$. From Theorem \ref{t8.2}, it yields that $v\equiv Q_{\omega_{0}}$ up to a translation and $\omega=\omega_{0}$. Since $v$ holds orbital stability, we get that $Q_{\omega_{0}}$ holds orbital stability.


Thus it is shown that for any $\omega\in[\omega_*,\frac{3}{16})$, the soliton $Q_\omega e^{i\omega t}$ of (\ref{1.1}) is orbitally stable.

This completes the proof of  Theorem \ref{t8.3}.
\end{proof}

According to Theorem \ref{t8.3} and Theorem \ref{t7.4}, sharp stability threshold of solitons for (\ref{1.1}) is established. Now we complete the proofs of Theorem \ref{t1.3} and Theorem \ref{t1.2}.

Proof of Theorem \ref{t1.3}: From Theorem \ref{t4.25}, $\omega_*$ is uniquely determined by $m_{0}$ and one has that $$\int \lvert Q_{\omega_{\ast}}\lvert^{2}dx=m_{0}.$$ Thus Theorem \ref{t7.4} and Theorem \ref{t8.3} deduce Theorem \ref{t1.2}.

Proof of Theorem \ref{t1.2}: For $\omega_{\ast}$ in Theorem \ref{t4.25}, Theorem \ref{t7.5} and Theorem \ref{t8.2} deduce Theorem \ref{t1.2}.

\section{Classification of normalized solutions}
\begin{theorem}\label{t9.1}
Let $\mathcal{B}$ and $\omega_*$ be the notations introduced in Section 4. Then
$$\mathcal{B}=[\omega_*,\frac{3}{16}).$$
\end{theorem}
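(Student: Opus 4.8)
The plan is to prove the set equality $\mathcal{B}=[\omega_*,\frac{3}{16})$ by establishing the two inclusions separately, using the stability dichotomy and the monotonicity of $M(Q_\omega)$ already obtained.

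First I would show $\mathcal{B}\subset[\omega_*,\frac{3}{16})$. By Theorem \ref{t4.16} we already have $\mathcal{B}\subset(0,\frac{3}{16})$, so it suffices to exclude the subinterval $(0,\omega_*)$. By Theorem \ref{t5.6}, for every $\omega\in\mathcal{B}$ the ground state $Q_\omega$ is orbitally stable. On the other hand, Theorem \ref{t7.4} asserts that for every $\omega\in(0,\omega_*)$ the soliton $Q_\omega e^{i\omega t}$ is orbitally unstable. Since a given soliton cannot be simultaneously orbitally stable and orbitally unstable, no $\omega\in(0,\omega_*)$ can lie in $\mathcal{B}$, whence $\mathcal{B}\subset[\omega_*,\frac{3}{16})$.

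Next I would prove the reverse inclusion $[\omega_*,\frac{3}{16})\subset\mathcal{B}$. Fix $\omega\in[\omega_*,\frac{3}{16})$ and put $m_1=M(Q_\omega)$. Using Proposition \ref{p2.18}, form the rescaled soliton $R_\omega$ and set $m=M(R_\omega)$; Lemma \ref{p2.17} guarantees $m\geq\frac{4}{3\sqrt{3}}\rho$. By Proposition \ref{p2.16} and Theorem \ref{t4.1}, the variational problem $d_m^{I}$ possesses a unique positive minimizer $\psi$, and the change of variables $\psi(x)=a v(\lambda x)$ with (\ref{3.2}) produces an energy ground state $v=Q_{\tilde\omega}$; by the very definition of $\mathcal{B}$ we have $\tilde\omega\in\mathcal{B}$. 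Theorem \ref{t4.14} then yields $M(Q_{\tilde\omega})=m_1=M(Q_\omega)$.

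Finally I would combine the two steps through injectivity. By the first inclusion, $\tilde\omega\in[\omega_*,\frac{3}{16})$. By Theorem \ref{t8.2} the map $\omega\mapsto M(Q_\omega)$ is strictly increasing on $(\omega_*,\frac{3}{16})$, while $M(Q_{\omega_*})=m_0$ is its strict minimum by Theorem \ref{t4.25} and Lemma \ref{p2.17}; consequently $\omega\mapsto M(Q_\omega)$ is injective on $[\omega_*,\frac{3}{16})$. Since $M(Q_{\tilde\omega})=M(Q_\omega)$ with both $\omega,\tilde\omega$ in this interval, injectivity forces $\tilde\omega=\omega$, and therefore $\omega\in\mathcal{B}$. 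The main obstacle is the last step: one must confirm that the frequency $\tilde\omega$ attached to the energy ground state indeed falls in $[\omega_*,\frac{3}{16})$ so that the strict monotonicity applies — and this is precisely what the sharp stability threshold (Theorems \ref{t7.4} and \ref{t8.3}) together with the monotonicity result (Theorem \ref{t8.2}) furnish. Once that localization is secured, the identification $\tilde\omega=\omega$ closes the argument.
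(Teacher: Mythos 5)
Your proposal is correct and follows essentially the same route as the paper's own proof: the inclusion $\mathcal{B}\subset[\omega_*,\frac{3}{16})$ via the stability/instability dichotomy, and the reverse inclusion by building the unique minimizer of $d_m^{I}$ with $m=M(R_\omega)$, rescaling to an energy ground state $Q_{\tilde\omega}\in\mathcal{B}$, matching masses through Theorem \ref{t4.14}, and forcing $\tilde\omega=\omega$ by the strict monotonicity of Theorem \ref{t8.2} on $[\omega_*,\frac{3}{16})$. The only (harmless) deviations are cosmetic: you localize $\tilde\omega$ by reusing the already-proved first inclusion rather than repeating the stability-versus-instability argument (Theorems \ref{t5.5} and \ref{t7.4}) as the paper does, and your citation of Theorem \ref{t4.14} for the mass identity is in fact more accurate than the paper's reference at that step.
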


\begin{proof}
By Theorem \ref{t4.16}, $\mathcal{B}\subset(0,\frac{3}{16})$. Thus Theorem \ref{t5.5} and
Theorem \ref{t7.5} derive that $\mathcal{B}\subset[\omega_*,\frac{3}{16})$. On the other hand, for arbitrary $\omega_{0}\in[\omega_*,\frac{3}{16})$, put $$m=\int Q_{\omega_{0}}^{2} dx.$$
For $Q_{\omega_{0}}$, from Proposition \ref{p2.18} one gets $R_{\omega_{0}}$. Let $M(R_{\omega_{0}})=m$. By Theorem \ref{t4.25}, one has $m\geq\frac{4}{3\sqrt{3}}\rho$. Now we construct the variational problem for $M=M(R_{\omega_{0}})$
\begin{equation*}
d_{m}^{I}=\inf_{\{u\in H^{1}(\mathbb{R}^{3}), \; M(u)=m, \; I(u)=0\}}E(u).
\end{equation*}
By Proposition \ref{p2.16} and Theorem \ref{t4.1}, the variational problem $d_{m}^{I}$ possesses a unique minimizer $\psi$. Moreover $\psi$ satisfies the Euler-Lagrange equation (\ref{3.1}). We conduct the change of variables $\psi=av(\lambda x)$ with (\ref{3.2}). Then $v$ is the energy ground state with $\omega$ in (\ref{3.2}). Let $\mathcal{B}$ is the set introduced in Section 4. It is clear that $\omega\in \mathcal{B}$. By Theorem \ref{t5.5}, the energy ground state $v$ is orbitally stable. It follows that $\omega\notin (0, \omega_{\ast})$ from Theorem \ref{t7.4}. Thus $\omega\in [\omega_{\ast}, \frac{3}{16})$. According to Theorem \ref{t4.25}, one gets that $M(v)=M(Q_{\omega_{0}})$. From Theorem \ref{t8.2}, it yields that $v\equiv Q_{\omega_{0}}$ up to a translation and $\omega=\omega_{0}$. Since $\omega\in \mathcal{B}$, we get that $\omega_{0}\in\mathcal{B}$. Thus $[\omega_*,\frac{3}{16})\subset \mathcal{B}$.

This proves Theorem \ref{t9.1}.

\end{proof}

\begin{theorem}\label{t9.2}
Let $\omega\in(0,\frac{3}{16})$ and $Q_\omega$ be the ground state of (\ref{1.2}). Then $M(Q_\omega)=\int Q^2_\omega dx$ establishes two 1-1   correspondences from $(0,\omega_*)$ to $(\infty, m_{0})$ and from $[\omega_*,\frac{3}{16})$ to $[m_{0},\infty)$ respectively.
\end{theorem}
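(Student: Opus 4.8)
The plan is to read off both correspondences directly from the strict monotonicity already established in Theorems \ref{t7.5} and \ref{t8.2}, combined with the continuity and boundary behavior of $\omega\mapsto M(Q_\omega)$ recorded in Proposition \ref{p2.2a} and the critical value $M(Q_{\omega_*})=m_0$ from Theorem \ref{t4.25}. First I would recall the three ingredients: by Proposition \ref{p2.2a} the map $\omega\mapsto M(Q_\omega)$ is $C^1$ (indeed real analytic) on $(0,\frac{3}{16})$ and satisfies $M(Q_\omega)\to\infty$ both as $\omega\to 0^+$ and as $\omega\to\frac{3}{16}^-$; by Theorem \ref{t4.25} the critical frequency $\omega_*$ obeys $M(Q_{\omega_*})=m_0=\inf_{0<\omega<3/16}M(Q_\omega)$; and by Lemma \ref{p2.17} one has $M(Q_\omega)\geq m_0$ for all $\omega$, so $m_0$ is a genuine global minimum attained at $\omega_*$.

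For the first correspondence, restrict attention to $(0,\omega_*)$. By Theorem \ref{t7.5} the map $\omega\mapsto M(Q_\omega)$ is strictly decreasing there, hence injective. By continuity, $M(Q_\omega)\to\infty$ as $\omega\to 0^+$ while $M(Q_\omega)\to M(Q_{\omega_*})=m_0$ as $\omega\to\omega_*^-$, so the intermediate value theorem shows the image is precisely the open interval $(m_0,\infty)$. Thus $\omega\mapsto M(Q_\omega)$ is a strictly decreasing bijection from $(0,\omega_*)$ onto $(m_0,\infty)$, which is exactly the asserted $1$-$1$ correspondence from $(0,\omega_*)$ to $(\infty,m_0)$, the orientation in the statement reflecting that $M$ decreases from $\infty$ to $m_0$ as $\omega$ increases.

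For the second correspondence, restrict to $[\omega_*,\frac{3}{16})$. Here $M(Q_{\omega_*})=m_0$, and by Theorem \ref{t8.2} the map $\omega\mapsto M(Q_\omega)$ is strictly increasing on $(\omega_*,\frac{3}{16})$, hence injective on the whole half-open interval. Since $M(Q_\omega)\to\infty$ as $\omega\to\frac{3}{16}^-$ and $M$ is continuous with left-endpoint value $m_0$, the intermediate value theorem gives that the image is exactly $[m_0,\infty)$. This yields a strictly increasing bijection from $[\omega_*,\frac{3}{16})$ onto $[m_0,\infty)$, as claimed.

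Once the monotonicity on each side of $\omega_*$ is in hand the argument is essentially bookkeeping, so I do not expect a genuine obstacle; the only point requiring care is the behavior at the shared endpoint $\omega_*$. One must invoke the strictness of the monotonicity on \emph{both} sides together with $M(Q_{\omega_*})=m_0$ to guarantee that the two images glue correctly: the minimum value $m_0$ is attained exactly once, namely at $\omega=\omega_*$, which is assigned to the second interval, while $m_0$ is excluded from the image of $(0,\omega_*)$. This makes the two correspondences disjoint except for the single frequency $\omega_*$ and confirms that every mass $m>m_0$ is achieved by exactly two frequencies (one in each interval), consistent with Theorem \ref{t1.4}.
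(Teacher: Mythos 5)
Your proposal is correct and follows essentially the same route as the paper: the paper's own proof simply cites Proposition \ref{p2.2a} (continuity and the divergence of $M(Q_\omega)$ at both endpoints), Theorem \ref{t7.5}, Theorem \ref{t8.2}, and the identity $M(Q_{\omega_*})=m_0$, exactly the ingredients you assemble. You merely spell out the intermediate value theorem and endpoint bookkeeping that the paper leaves implicit, which is a faithful (and slightly more careful) rendering of the same argument.
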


\begin{proof}
By Proposition \ref{p2.2a} and Theorem \ref{t7.5}, $M(Q_\omega)$ establishes a 1-1 correspondences from $(0,\omega_*)$  to $(\infty, m_{0})$. By Proposition \ref{p2.2a} and Theorem \ref{t8.2}, $M(Q_\omega)$ establishes a 1-1 correspondences from $[\omega_*,\frac{3}{16})$ to $[m_{0},\infty)$.
By Theorem \ref{t4.16},
$$\int Q^2_{\omega_*}dx=m_{0}.$$

This proves Theorem \ref{t9.2}.
\end{proof}

\begin{theorem}\label{t9.3}
Let $\phi$ be the unique positive minimizer of (\ref{1.5}) and $\rho=\int \phi^2dx$.  Then there exists $m_{0}=\inf_{\{0<\omega<\frac{3}{16}\}}M(Q_{\omega})$ such that  $\frac{4}{3\sqrt{3}}\rho\leq m_{0}\leq\rho$. When $0\leq m<m_{0}$, (\ref{1.2}) has no any normalized solutions with the prescribed mass
$\int \lvert u\rvert^2dx=m$.
When $m=m_{0}$, (\ref{1.2}) has a unique positive normalized solution with the prescribed mass $\int \lvert u\rvert^2dx=m=m_{0}$. When $m>m_{0}$, (\ref{1.2}) has just two positive normalized solutions with the prescribed mass $\int \lvert u\rvert^2dx=m$.
\end{theorem}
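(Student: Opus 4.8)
The plan is to reduce the counting of positive normalized solutions to a purely one-dimensional problem: for a fixed target mass $m$, count the frequencies $\omega\in(0,\frac{3}{16})$ at which the mass function $M(Q_\omega)$ attains the value $m$. The bridge is the rigidity of positive solutions supplied by Proposition \ref{p2.2}: equation (\ref{1.2}) admits a positive solution exactly when $\omega\in(0,\frac{3}{16})$, and that solution equals $Q_\omega$, unique up to translation. Hence, up to translation, every positive normalized solution with $\int\lvert u\rvert^2\,dx=m$ is of the form $Q_\omega$ for some admissible $\omega$ with $M(Q_\omega)=m$, and conversely; two such solutions arising from distinct frequencies $\omega_1\neq\omega_2$ are genuinely distinct, since $Q_{\omega_1}$ and $Q_{\omega_2}$ solve (\ref{1.2}) with different values of the parameter $\omega$. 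The bound $\frac{4}{3\sqrt{3}}\rho\leq m_0\leq\rho$ is already furnished by Lemma \ref{p2.17}, and the case $m=0$ is vacuous since a zero-mass solution is trivial and thus excluded.

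Next I would invoke the global shape of $\omega\mapsto M(Q_\omega)$ assembled earlier. By Theorem \ref{t4.25} there is a unique $\omega_*\in(0,\frac{3}{16})$ with $M(Q_{\omega_*})=m_0=\inf_{\omega}M(Q_\omega)$; by Theorem \ref{t7.5} the map is strictly decreasing on $(0,\omega_*)$, by Theorem \ref{t8.2} strictly increasing on $(\omega_*,\frac{3}{16})$, and by Proposition \ref{p2.2a} it blows up at both endpoints. Equivalently, Theorem \ref{t9.2} already records this as two strictly monotone one-to-one correspondences, the first carrying $(0,\omega_*)$ onto $(m_0,\infty)$ and the second carrying $[\omega_*,\frac{3}{16})$ onto $[m_0,\infty)$.

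With this in hand the count splits into three cases according to the position of $m$ relative to $m_0$. When $0\le m<m_0$, since $M(Q_\omega)\ge m_0>m$ for every admissible $\omega$, no frequency achieves mass $m$, so there is no positive normalized solution. When $m=m_0$, strict monotonicity on each side forces $M(Q_\omega)>m_0$ for $\omega\neq\omega_*$, so $\omega=\omega_*$ is the only frequency achieving $m_0$, and $Q_{\omega_*}$ is the unique positive normalized solution (which is precisely the $\omega_*$ of Theorems \ref{t1.3} and \ref{t1.2}). When $m>m_0$, the first correspondence yields exactly one $\omega_1\in(0,\omega_*)$ with $M(Q_{\omega_1})=m$ and the second yields exactly one $\omega_2\in(\omega_*,\frac{3}{16})$ with $M(Q_{\omega_2})=m$; since $m>m_0=M(Q_{\omega_*})$ we have $\omega_1<\omega_*<\omega_2$, so these frequencies are distinct and $Q_{\omega_1},Q_{\omega_2}$ are exactly two distinct positive normalized solutions.

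The substance of the argument is entirely borrowed: all the analytic difficulty resides in the earlier monotonicity theorems (Theorems \ref{t7.5} and \ref{t8.2}), which in turn rest on uniqueness of the energy minimizers and the combined spectral and variational machinery. The only step requiring genuine care here is the reduction itself, namely verifying that a positive normalized solution of (\ref{1.2}) is in exact bijection, up to translation, with a frequency $\omega$ at which $M(Q_\omega)=m$; this hinges squarely on the uniqueness clause of Proposition \ref{p2.2}. Once that reduction is made clean, the three-case count is immediate from Theorem \ref{t9.2}, and I expect no obstacle beyond this bookkeeping.
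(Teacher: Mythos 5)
Your proposal is correct and follows essentially the same route as the paper: reduce the count to frequencies via the uniqueness-up-to-translation of positive solutions in Proposition \ref{p2.2}, rule out $m<m_0$ via Lemma \ref{p2.17}, and read off the one or two admissible frequencies from the bimonotone correspondences of Theorem \ref{t9.2} (equivalently Theorems \ref{t7.5} and \ref{t8.2} with Theorem \ref{t4.25}). Your write-up is in fact slightly more careful than the paper's own two-line proof, which only states non-existence explicitly for $m<\frac{4}{3\sqrt{3}}\rho$ and leaves the remaining bookkeeping, including the distinctness of $Q_{\omega_1}$ and $Q_{\omega_2}$, implicit.
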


\begin{proof}
By Proposition \ref{p2.2} and Lemma \ref{p2.17}, we get that when $0\leq m<\frac{4}{3\sqrt{3}}\rho$, (\ref{1.2}) has no any normalized solutions with the prescribed mass
$$\int \lvert u\rvert^2dx=m.$$
The rest of Theorem \ref{t9.3} can be directly gotten by Theorem \ref{t9.2}.

This proves Theorem \ref{t9.3}.
\end{proof}

Now we complete the proof of Theorem \ref{t1.4}.

Proof of Theorem \ref{t1.4}: By Theorem \ref{t9.2} and Theorem \ref{t9.3}, one deduces Theorem \ref{t1.4}.

\vspace{0.2cm}
\textbf{Acknowledgment.}

This research is supported by the National Natural Science Foundation of China 12271080.

\textbf{Data Availability Statement}

Data sharing not applicable to this article as no datasets were generated or analysed during the current study.



\begin{thebibliography}{10}
\bibitem{AGTF2001}Abdullaev, F. K., Gammal, A., Tomio, L., Frederico, T.: Stability of trapped Bose-Einstein condensates. Phys. Rev. A \textbf{63}, 043604 (2001)
\bibitem{AM2022} Ardila, A. H., Murphy, J.: Threshold solutions for the 3D cubic-quintic NLS. arXiv preprint arXiv:2208.08510 (2022)

\bibitem{BZ2022}Bai, M., Zhang, J.: High-speed excited multi-solitons in competitive power nonlinear Schr\"{o}dinger equations. Z. Angew. Math. Phys. \textbf{73}, 141 (2022) https://doi.org/10.1007/s00033-022-01774-0


\bibitem{BMRV2021}Bartsch, T., Molle, R., Rizzi, M., Verzini, G.: Normalized solutions of mass supercritical Schr\"odinger equations with potential.  Commun. Partial Differ. Equ. \textbf{46}(9), 1729-1756  (2021)
\bibitem{BC1981}Berestycki, H., Cazenave, T.: Instabilit$\acute{e}$ des $\acute{e}$tats stationnaies dans les $\acute{e}$quations de Schr\"odinger et de Klein-Gordon non lin$\acute{e}$aires. C. R. Acad. Sci. Paris S$\acute{e}$r. I Math. \textbf{293}, 489-492, 1981.
\bibitem{BL1983}Berestycki, H., Lions, P. L.: Nonlinear scalar field equations. Arch. Rational Math. Anal. \textbf{82}, 313-375 (1983)
\bibitem{B1999}Bourgain, J.: Global wellposedness of defocusing critical nonlinear Schr\"{o}dinger equation in the radial case. J. Amer. Math. Soc. \textbf{12}(1), 145-172 (1999)


\bibitem{CS2021}Carles, R., Sparber, C.: Orbital stability vs. scattering in the cubic-quintic Schr\"{o}dinger equation. Rev. Math. Phys. \textbf{33}, 2150004 (2021)

\bibitem{C2003}Cazenave, T.: Semilinear Schr\"{o}dinger equations, Courant Lecture Notes in Mathematics, 10. New York University, Courant Institute of Mathematical Sciences, New York; American Mathematical Society, Providence, RI. (2003)
\bibitem{CL1982}Cazenave, T., Lions, P. L.: Orbital stability of standing waves for some nonlinear Schr\"odinger equations. Comm.  Math. Phys. \textbf{85}(4), 549-561 (1982)
\bibitem{CKSTT2008} Colliander, J., Keel. M., Staffilani, G., Takaoka, H., Tao. T.: Global well-posedness and scattering for the energy-critical nonlinear Schr\"{o}dinger equation in $R^3$. Ann. of Math. \textbf{167}(3), 767-865 (2008)


\bibitem{CL2011}C$\hat{o}$te, R., Le Coz, S.: High-speed excited multi-solitons in nonlinear Schr\"odinger equations. J. Math. Pures Appl. \textbf{96}, 135-166 (2011)



\bibitem{D2012}Benjamin, D.: Global well-posedness and scattering for the defocusing $L^2$-critical nonlinear Schr\"{o}dinger equation when $d\geq3$. J. Amer. Math. Soc. \textbf{25}, 429-463 (2012)


\bibitem{CV2017}Cirant, M., Verzini, G.: Bifurcation and segregation in quadratic two-populations mean field games systems. ESAIM Control Optim. Calc. Var. \textbf{23}(3), 1145-1177 (2017)
\bibitem{CMM2011}C$\hat{o}$te, R., Martel, Y., Merle, F.: Construction of multi-soliton solutions for the $L^{2}$-supercritical gkdv and NLS equations. Rev. Mat. Iberoam. \textbf{27}(1), 273-302 (2011)

\bibitem{DMM2000}Desyatnikov, A., Maimistov, A., Malomed, B.: Three-dimensional spinning solitons
in dispersive media with the cubic-quintic nonlinearity. Phys. Rev. E  \textbf{61}(3), 3107-3113 (2000)

\bibitem{DHR2008}Duyckaerts, T., Holmer, J., Roudenko, S.: Scattering for the non-radial 3D cubic
nonlinear Schr\"{o}dinger equation. Math. Res. Lett. \textbf{15}(6), 1233-1250 (2008)

\bibitem{GFTLC2000}Gammal, A., Frederico, T., Tomio, L., Chomaz, P.: Atomic Bose-Einstein condensation with three-body interactions and collective excitations. J. Phys. B  \textbf{33}, 4053-4067 (2000)


\bibitem{GZ2008}Gan, Z. H., Zhang, J.: Sharp threshold of global existence and instability of standing wave for a Davey-Stewartson system, Comm. Math. Phys. \textbf{283}, 93-125  (2008)
\bibitem{GNN1979}Gidas, B., Ni, W. M., Nirenberg, L.: Symmetry and related properties via the maximal principle. Comm. Math. Phys. \textbf{68}, 209-243 (1979)
\bibitem{GSS1987}Grillakis, M., Shatah, J., Strauss, W. A.: Stability theory of solitary waves in the presence of symmetry, I. J. Funct. Anal. \textbf{74}(1), 160-197 (1987)
\bibitem{GSS1990}Grillakis, M., Shatah, J., Strauss, W. A.: Stability theory of solitary waves in the presence of symmetry, II. J. Funct. Anal. \textbf{94}(2), 308-348 (1990)


\bibitem{HK2005}Hmidi, T., Keraani, S.: Blowup theory for the critical nonlinear Schr\"{o}dinger equations revisited. Int. Math. Res. Notices \textbf{46}, 2815-2828  (2005)
\bibitem{JJTV2022}Jeanjean, L., Jendrej, J.,   Le, T., Visciglia, N.:  Orbital stability of ground states for a Sobolev critical Schr\"{o}dinger equation, J. Math. Pures Appl. \textbf{164}, 158-179 (2022)

\bibitem{JL2021}Jeanjean, L.,  Le, T.: Multiple normalized solutions for a Sobolev critical Schr\"odinger equation. Math. Ann. \textbf{384}, 101-134 (2022)

\bibitem{JL2022}Jeanjean, L., Lu, S. S.:  On global minimizers for a mass constrained problem. Calc. Var. Partial Dif. \textbf{61}, 214  (2022)


\bibitem{K1959}Kato, T.: Growth properties of solutions of the reduced wave equation with a variable coefficient. Comm. Pure Appl. Math. \textbf{12}, 403-425  (1959)

\bibitem{KM2006}Kenig, C. E., Merle, F.: Global well-posedness, scattering and blow-up for the energy-critical, focusing, non-linear Schr\"{o}dinger equation in the radial case. Invent. Math.  \textbf{166}(3), 645-675  (2006)



\bibitem{K2011}Kawano, S.: Uniqueness of positive solutions to semilinear elliptic equations with double power nonlinearities. Differ. Integral Equ. \textbf{24}(24), 201-207  (2011)
\bibitem{KOPV2017}Killip, R., Oh, T., Pocovnicu, O., Visan, M.: Solitons and scattering for the cubic-quintic nonlinear schr\"{o}dinger equation on $\mathbb{R}^{3}$. Arch. Rational Mech. Anal. \textbf{225}(1), 469-548  (2017)






\bibitem{LR2020}Lewin, M., Rota Nodari, S.: The double-power nonlinear Schr\"{o}dinger equation and its generalizations: uniqueness, non-degeneracy and applications, Calc. Var. Partial Dif . \textbf{59}(6), 197 (2020)


\bibitem{LL2000}Lieb, E. H., Loss, M.: Analysis, Graduate Studies in Mathematics.  American Mathematical Society, (2000)
\bibitem{L2009}Le Coz, S.:  Standing Waves in Nonlinear Schr\"{o}dinger Equations, Analytical and Numerical Aspects of Partial Differential Equations. Walter de Gruyter, Berlin (2009)
\bibitem{MM2006}Martel, Y., Merle, F.: Multi solitary waves for nonlinear Schr\"{o}dinger equations. Ann. I. H. Poincar\'{e}-AN. \textbf{23}, 849-864 (2006)




\bibitem{MMT2006}Martel, Y., Merle, F., Tsai, T. P.: Stability in $H^1$ of the sum of $K$ solitary waves for some nonlinear Schr\"{o}dinger equations. Duke Math. J. \textbf{133}(3), 405-466 (2006)


\bibitem{MMCML2000}Mihalache, D., Mazilu, D., Crasovan, L. C., Malomed, B. A., Lederer, F.: Three-dimensional spinning solitons in the cubic-quintic nonlinear medium. Phys. Rev. E
\textbf{61}(6), 7142-7145 (2000)
\bibitem{MMCTBMT2002} Mihalache, D., Mazilu, D., Crasovan, L. C., Towers, I., Buryak, A. V., Malomed, B. A., Torner, L.: Stable spinning solitons in three dimensions. Phys. Rev. Lett. \textbf{88}(7), 4 (2002)


\bibitem{M2019}Malomed, B.: Vortex solitons: Old results and new perspectives. Phys. D \textbf{399}, 108-137  (2019)

\bibitem{MKV2021}Murphy, J., Killip, R.,Visan, M.: Scattering for the cubic-quintic NLS: crossing the virial threshold. SIAM J. Math. Anal. \textbf{53}(5), 5803-5812 (2021)


\bibitem{MXZ2013}Miao, C.,  Xu, G., Zhao, L.: The dynamics of the 3D radial NLS with the combined terms. Commun. Math. Phys. \textbf{318}(3), 767-808  (2013)
\bibitem{NS2012}Nakanishi, K., Schlag, W.: Global dynamics above the ground state energy for the cubic NLS equation in 3D. Calc. Var. Partial Dif. \textbf{44}(1-2), 1-45 (2012)
\bibitem{OT1991}Ogawa, T., Tsutsumi, Y.: Blow-up of $H^{1}$ solutions for the nonlinear Schr\"{o}dinger equation. J. Differ. Equations  \textbf{92}: 317-330 (1991)
\bibitem{P1965}Pohozaev, S. I.: Eingenfunctions of the equations of the $\Delta u+\lambda f(u)=0$. Sov. Math. Doklady. \textbf{165}, 1408-1411 (1965)


\bibitem{RS1978}Reed, M., Simon, B.: Methods of Modern Mathematical Physics. Vol. IV, Academic Press, New York (1978)
\bibitem{S2009}Schlag, W.: Stable manifolds for an orbitally unstable NLS. Ann. of Math. \textbf{169}(1): 139-227 (2009)
\bibitem{ST2000}Serrin, J., Tang, M.: Uniqueness of ground states for quasilinear elliptic equations. Indiana Univ. Math. J. \textbf{49}(3), 897-923 (2000)



\bibitem{S2021}Soave, N.: Normalized ground states for the NLS equation with combined nonlinearities: The Sobolev critical case. J. Funct. Anal. \textbf{279}(6), 108610 (2020).
\bibitem{S1977}Strauss, W. A.: Existence of solitary waves in higher dimensions. Comm. Math. Phys. \textbf{55}(2), 149-162 (1977)
\bibitem{T2009}Tao, T.: Why are solitons stable? Bulletin of the American Mathematical Society, \textbf{46}(1), 1-33  (2009)
\bibitem{TVZ2007} Tao, T., Visan, M., Zhang, X.: The nonlinear Schr\"{o}dinger equation with combined power-type nonlinearities. Commun. Partial Differ. Equ. \textbf{32}(7-9), 1281-1343 (2007)

\bibitem{WW2022}Wei, J. C., Wu, Y. Z.; Normalized solutions for Schr\"{o}dinger equations with critical Sobolev exponent and mixed nonlinearities. J. Funct. Anal. \textbf{283}(6), 109574 (2022)
\bibitem{W1983}Weinstein, M. I.: Nonlinear Schr\"{o}dinger equations and sharp interpolations estimates. Comm. Math. Phys. \textbf{87}, 567-576  (1983)
\bibitem{W1985}Weinstein, M. I.: Modulation stability of ground states of nonlinear Schr\"{o}dinger equations. SIAM J. Math. Anal. \textbf{16}, 472-491 (1985)
\bibitem{W1986}Weinstein, M. I.: Lyapunov stability of ground states of nonlinear dispersive evolution equations. Comm. Pure  Appl. Math. \textbf{39}, 51-68  (1986)
\bibitem{Z2000}Zhang, J.: Stability of attractive Bose-Einstein condensates. J. Stat. Phys. \textbf{101}(3-4), 731-746 (2000)
\bibitem{Z2002}Zhang, J.: Cross-constrained variational problem and nonlinear Schr\"{o}dinger equation. Foundations of computational mathematics (Hong Kong, 2000), 457-469, World Sci. Publ., River Edge, NJ, (2002)
\bibitem{Z2005}Zhang, J.: Sharp threshold for blowup and global existence in nonlinear Schr\"{o}dinger equations under a harmonic potential. Commun. Partial Differ. Equ. \textbf{30}, 1429-1443 (2005)

\bibitem{Z2006}Zhang, X.: On the Cauchy problem of 3-D energy-critical Schr\"{o}dinger equations with subcritical perturbations. J. Differ. Equ. \textbf{230}(2), 422-445 (2006)
\bibitem{ZH1994}Zhou, C., He, X. T.: Stochastic diffusion of electrons in evolutive Langmuir fields. Phys. Scr. \textbf{50}, 415-418 (1994)


\end{thebibliography}


\end{document}